\theoremstyle{plain}
\newtheorem{theorem}{Theorem}[section]
\newtheorem{lemma}[theorem]{Lemma}
\newtheorem{corollary}[theorem]{Corollary}
\newtheorem{proposition}[theorem]{Proposition}
\newtheorem{conjecture}[theorem]{Conjecture}
\theoremstyle{definition}
\newtheorem{definition}[theorem]{Definition}
\theoremstyle{remark}
\newtheorem{remark}[theorem]{Remark}
\newtheorem*{remark*}{Remark}
\newtheorem*{claim*}{Claim}
\newtheorem{assumption}[theorem]{Assumption}
\newtheorem*{acknowledgments}{Acknowledgments}
\newtheorem*{notation}{Notation}
\numberwithin{figure}{section}
\newcommand{\Int}{\mathrm{int}}
\begin{document}

\title{Taut foliations of 3-manifolds with Heegaard genus two}
\author{Tao Li}
\address{Department of Mathematics \\
 Boston College \\
 Chestnut Hill, MA 02467}
\email{taoli@bc.edu}
\thanks{Partially supported by NSF grant DMS-1906235.}

\begin{abstract}
Let $M$ be a closed, orientable, and irreducible 3-manifold with Heegaard genus two. We prove that if the fundamental group of $M$ is left-orderable then $M$ admits a co-orientable taut foliation. 
\end{abstract}

\maketitle


\section{Introduction}\label{Sintro}

A co-dimension one foliation of a 3-manifold is called a taut foliation if every leaf meets a transverse circle.  Taut foliations have played an important role in the study of 3-manifolds, e.g.~\cite{N, P, G1, G2, G3}. 
The main goal of this paper is to construct taut foliations using information from Heegaard diagrams and fundamental groups. Our main motivation is the $L$-space Conjecture:

\begin{conjecture}[\cite{BGW, J}]
Let $M$ be a closed, orientable, and irreducible 3-manifold. Then the following statements are equivalent.
\begin{enumerate}
	\item $M$ is not a Heegaard Floer $L$-space,
	\item $M$ admits a co-orientable taut foliation,
	\item $\pi_1(M)$ is left-orderable.
\end{enumerate}
\end{conjecture}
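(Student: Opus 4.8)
We record a strategy toward the full three-way equivalence; the main theorem of this paper realizes one step of it in a special case. Two of the six implications are already theorems. The implication $(2)\Rightarrow(1)$ holds: a $C^0$ co-orientable taut foliation on a closed, orientable, irreducible $M$ can be $C^0$-approximated by a contact structure (Eliashberg--Thurston, with the $C^0$ statement due to Bowden and to Kazez--Roberts) whose Ozsv\'ath--Szab\'o contact invariant is nonzero, which by the genus-bound and adjunction techniques of Ozsv\'ath--Szab\'o forces $\operatorname{rk}\widehat{HF}(M)>|H_1(M;\mathbb{Z})|$, so $M$ is not an $L$-space. The implication $(2)\Rightarrow(3)$ also holds \cite{BGW}: a co-orientable taut (hence Reebless) foliation on such an $M$ yields a sufficiently faithful $\pi_1(M)$-action on a universal circle (Thurston, Calegari--Dunfield), from which Boyer--Gordon--Watson extract a left-ordering of $\pi_1(M)$. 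Consequently it suffices to establish the two \emph{constructive} implications $(3)\Rightarrow(2)$ and $(1)\Rightarrow(2)$; together with the above, these yield all the remaining implications of the equivalence.

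For $(3)\Rightarrow(2)$ the plan is to extend the construction of the present paper from Heegaard genus two to arbitrary genus. Given a genus-$g$ Heegaard splitting $M=H_1\cup_\Sigma H_2$ and a left-ordering of $\pi_1(M)$, realized dynamically as an action $\rho\colon\pi_1(M)\to\mathrm{Homeo}^+(\mathbb{R})$, one builds a branched surface carried by a spine of $\Sigma$ whose complementary pieces inside $H_1$ and inside $H_2$ are products, and shows that the combinatorial data needed to complete it to a co-orientable taut foliation — essentially a consistent choice of co-orientations along the meridian disk systems of $H_1$ and $H_2$ — is encoded by a finite system of sign conditions on the attaching curves, which $\rho$ is then used to solve. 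For $g=2$ these conditions are always solvable; the expectation is that the casework, though heavier, persists for all $g$. A complementary reduction: by geometrization the conjecture is already a theorem for Seifert fibered spaces (Boyer--Rolfsen--Wiest, Lisca--Stipsicz, and others) and for graph manifolds (Boyer--Clay; Hanselman--Rasmussen--Rasmussen--Watson), so one may assume $M$ is hyperbolic, or atoroidal with a hyperbolic JSJ piece, and feed that structure into the Heegaard-diagram machinery — though, since hyperbolic $3$-manifolds have unbounded Heegaard genus, this does not reduce to the genus-two case.

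For $(1)\Rightarrow(2)$ the idea is to convert largeness of $\widehat{HF}(M)$ into a taut \emph{sutured} structure and then invoke Gabai's theorem \cite{G1,G2,G3} that a taut sutured manifold hierarchy on the complement of a transverse curve in $M$ produces a co-orientable taut foliation of $M$. The case $b_1(M)>0$ is already understood — such $M$ are automatically not $L$-spaces, carry a taut foliation by Gabai, and have left-orderable $\pi_1$ by $(2)\Rightarrow(3)$ — so the content lies in rational homology spheres. There, if $M$ is not an $L$-space then $\operatorname{rk}\widehat{HF}(M)>|H_1(M;\mathbb{Z})|$, and one wishes to extract from this a decomposing surface $S\subset M$ for which the sutured Floer homology of $M$ cut along $S$ is nonzero in an extremal $\mathrm{Spin}^c$ grading; iterating Juh\'asz's surface-decomposition formula would then yield a taut hierarchy, and Gabai's theorem the foliation. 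This is the mechanism behind the known detection results (Ni's fiberedness detection, and the analogous sutured instanton arguments of Kronheimer--Mrowka), so the real content is to run it \emph{without} any a priori distinguished surface; concretely, one would cut $M$ along the Heegaard surface $\Sigma$ and use the bordered invariants of $H_1$ and $H_2$ to locate such an $S$.

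I expect this last point to be the main obstacle. There is presently no general mechanism translating the numerical statement $\operatorname{rk}\widehat{HF}(M)>|H_1(M;\mathbb{Z})|$ into a geometric object — a taut foliation, an essential lamination, or a left-ordering of $\pi_1(M)$ — when $M$ is a rational homology sphere, and every case that has been settled (Dehn surgeries on knots, graph manifolds, Seifert fibered spaces) exploits extra structure absent from a general hyperbolic $M$. Accordingly $(1)\Rightarrow(2)$ — or, if one prefers to route through orderings, $(1)\Rightarrow(3)$ — is the crux, with $(3)\Rightarrow(2)$ the more tractable half that the Heegaard-genus-two theorem of this paper is designed to attack first; even the genus-$g$ extension of that theorem, while combinatorially demanding, looks within reach of the methods developed here.
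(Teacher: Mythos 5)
The statement you were asked to prove is the $L$-space Conjecture, which is precisely that: a conjecture, open at the time of this paper. The paper itself does not prove it; it states the conjecture as motivation and then proves a single special case of one implication, namely that $(3)\Rightarrow(2)$ when $M$ has Heegaard genus two (Theorem~\ref{Tmain}). Your submission correctly recognizes this and is therefore not a proof at all but a survey and research program, so it cannot be evaluated as a correct proof of the statement.

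Beyond that framing issue, there is a genuine error of fact in your outline. You assert that $(2)\Rightarrow(3)$ ``also holds'' by citing Boyer--Gordon--Watson together with the Thurston/Calegari--Dunfield universal circle. This implication is \emph{not} a theorem. What Calegari--Dunfield actually prove (and what the paper correctly records in the introduction) is that if $M$ is atoroidal and admits a taut foliation, then $M$ has a \emph{finite cover} with left-orderable fundamental group; passing from an action of $\pi_1(M)$ on a universal circle to an action on $\mathbb{R}$ (equivalently a left order) requires vanishing of the Euler class, which is not known in general, and BGW do not close this gap. So of the six implications in the conjecture only $(2)\Rightarrow(1)$ is established in full generality; $(2)\Rightarrow(3)$, like $(1)\Rightarrow(2)$, $(1)\Rightarrow(3)$, $(3)\Rightarrow(1)$, and $(3)\Rightarrow(2)$, remains open outside special families. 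The remainder of your text is a plausible description of directions (extending the genus-two branched-surface construction to higher genus, or using sutured Floer decompositions and Gabai's hierarchy theorems for $(1)\Rightarrow(2)$), but none of it constitutes a proof, and you yourself flag $(1)\Rightarrow(2)$ as the crux without an argument. The correct response here is that the statement is a conjecture with no proof in the literature or in this paper; the paper contributes only the genus-two instance of $(3)\Rightarrow(2)$.
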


The $L$-space conjecture is known to hold for many 3-manifolds, e.g.~graph manifolds \cite{BC, HRRW} and manifolds with positive first Betti number \cite{BRW, G1}. 
It follows from the work of Ozsv\'{a}th-Szab\'{o} \cite{OZ}, Bowden \cite{B}, and Kazez-Roberts \cite{KR} that $M$ cannot be an $L$-space if it admits a taut foliation. Moreover, 
Calegari and Dunfield \cite{CD} have shown that, if $M$ is atoroidal and admits a taut foliation, then $M$ has a finite cover with a left-orderable fundamental group.

Suppose $M$ is a closed, orientable, and irreducible 3-manifold with Heegaard genus two and suppose $\pi_1(M)$ is left-orderable. 
Sarah Rasmussen recently showed that if a minimal intersection  Heegaard diagram is of genus 2 and its associated fundamental group presentation has no subwords that are trivial in $\pi_1(M)$, then $M$ admits a taut foliation, see \cite{R} for an extended abstract in Oberwolfach Reports. 
The main goal of this paper is to prove that $M$ admits a taut foliation without the condition on the Heegaard diagram:

\begin{theorem}\label{Tmain}
Let $M$ be a closed, orientable, and irreducible 3-manifold with Heegaard genus 2.  If $\pi_1(M)$ is left-orderable, then $M$ admits a co-orientable taut foliation.
\end{theorem}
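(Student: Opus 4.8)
The plan is to combine the left-orderability of $\pi_1(M)$ with the Heegaard structure to build a branched surface (or a lamination carried by one) that can be completed to a co-orientable taut foliation. First I would fix a genus-two Heegaard splitting $M = H_1 \cup_\Sigma H_2$ and a Heegaard diagram, obtaining a two-generator, two-relator presentation of $\pi_1(M)$. The left-ordering $\mathfrak{o}$ of $\pi_1(M)$ gives, via the standard construction, an action of $\pi_1(M)$ on $\mathbb{R}$ by orientation-preserving homeomorphisms with no global fixed point; equivalently, one gets a $\pi_1(M)$-invariant ``order'' on cosets that I would use to orient the pieces of a train-track-times-interval decomposition of the handlebodies. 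The key point is that Rasmussen's argument already handles the ``generic'' case, so the real work is to remove the hypothesis that the presentation has no null-homotopic proper subwords; I would treat the subwords that are trivial in $\pi_1(M)$ as providing extra compressing disks or annuli, and perform the foliation-building on the complementary pieces, gluing along these disks/annuli using the taut-foliation gluing lemmas (à la Gabai) once co-orientations have been matched up by the left-ordering.

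Concretely, the steps in order: (1) Normalize the Heegaard diagram and record the set $S$ of cyclic subwords of the relators that are trivial in $\pi_1(M)$; each such subword $w$ bounds an immersed disk, and after cut-and-paste I can assume these give a (possibly empty) collection of essential disks or spheres in one of the handlebodies — irreducibility of $M$ kills the sphere case. (2) On the handlebody $H_i$, build a sutured-manifold/branched-surface decomposition $\mathcal{B}_i$ whose complementary regions are $I$-bundles, arranging that the horizontal boundary pattern on $\Sigma$ is compatible on the two sides along the Heegaard curves; here I use the left-ordering to choose transverse orientations so that the two branched surfaces glue into a single branched surface $\mathcal{B}$ in $M$ with no sink disk (the Li–Roberts ``no sink disk'' criterion, or the Gabai sutured hierarchy ending in a product). (3) Show $\mathcal{B}$ fully carries a lamination $\mathcal{L}$: this is where left-orderability is used quantitatively — the order on $\pi_1(M)$-cosets supplies a consistent choice of weights / a transverse invariant measure class, so that the branch equations are solvable. (4) Split $\mathcal{B}$ and collapse complementary $I$-bundles to extend $\mathcal{L}$ to a taut foliation $\mathcal{F}$ of $M$; co-orientability follows because every leaf is two-sided by construction, the transverse orientations having been fixed globally in step (2). (5) Handle degenerate cases separately: if $M$ is reducible or is a connected sum or Seifert-fibered small case, appeal to the known $L$-space conjecture results cited in the introduction (\cite{BC, HRRW, BRW, G1}); if $\pi_1(M)$ is trivial or finite then it is not left-orderable, so there is nothing to prove.

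The main obstacle I expect is step (3)–(4): ensuring that the branched surface $\mathcal{B}$ assembled from the two handlebody pieces actually carries a lamination with no compact leaf of the ``wrong'' type and with no sink disk, especially in the presence of the trivial-subword disks/annuli from step (1). Rasmussen's hypothesis is precisely what guarantees this automatically, so without it I must argue that the extra reducing disks can always be absorbed: either they split $M$ into simpler pieces where I induct / quote known cases, or they can be capped off by product regions of the foliation. Making the co-orientation assignments in step (2) consistent around the genus-two handle structure is a finite but delicate combinatorial check, and it is exactly here that the hypothesis ``$\pi_1(M)$ left-orderable'' must enter in an essential (not merely cohomological) way — an Euler-class obstruction argument alone would not suffice, so I anticipate needing the full dynamical $\mathbb{R}$-action, using its fixed-point-free-ness to rule out the sink disk.
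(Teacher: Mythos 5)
Your high-level idea (encode the left order in the combinatorics of a branched surface built from a genus-two Heegaard diagram and then grow a lamination into a foliation) points in the same general direction as the paper, but the proposal has a few places where the mechanism you describe does not work and where the central difficulty is not addressed.

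First, step (3) is a real gap. You propose to show the branched surface fully carries a lamination by using the order on cosets to ``supply a consistent choice of weights / a transverse invariant measure class, so that the branch equations are solvable.'' A solution to the branch equations produces a \emph{measured} lamination, and there is no reason a left order produces one; generically it will not, and requiring it is far stronger than what you need. The paper instead uses the order as a \emph{splitting instruction}: one splits the branched surface infinitely, at each step choosing the local move according to whether the relevant ratio $a^{-1}c$ of associated group elements is positive, negative, or trivial, and the lamination is the inverse limit of the splitting sequence. Positivity of the group elements associated with branch sectors is exactly what rules out twisted disks of contact (the local obstruction to continuing the splitting). This is qualitatively different from solving branch equations, and your appeal to a ``no sink disk'' criterion is likewise not what is used: the obstruction here is twisted disks of contact, not sink disks, and it is killed by the group-theoretic positivity, not by a combinatorial sink-disk check.

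Second, your treatment of the trivial subwords is backwards from what actually has to happen. You propose to treat them as reducing disks/annuli across which you glue sutured-manifold foliations. But in the paper the trivial-subword regions are quadrilateral faces of the Heegaard diagram whose associated elements are $1$; these faces must be \emph{deleted} from the branched surface to maintain positivity, and the hard part is then to \emph{add them back} one by one so that in the end only a single quadrilateral is missing (at which point the lamination complement is an $I$-bundle and the foliation extension is trivial). The induction that adds them back hinges on a delicate analysis of disks of contact bounded by the vertical boundary annuli, culminating in ruling out ``bigon'' disks of contact (the paper's Proposition about two-corner components of $\partial_v N(B_1)$). That proposition is the crux of removing Rasmussen's no-trivial-subword hypothesis, it uses minimal complexity of the diagram, wave moves, band sums, the hyperelliptic structure of genus-two Whitehead graphs, and strong irreducibility; your outline does not engage with any of it, and it is precisely the step you flagged as ``the main obstacle I expect.'' Finally, the plan to ``extend Rasmussen's argument'' is not what happens: the paper's construction is entirely different from Rasmussen's, not a patch of it.
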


Our proof is very different from Rasmussen's construction. The main tools are branched surfaces. Below is a brief outline:
 
Given a genus-two Heegaard splitting of $M$, consider the 2-complex formed by the Heegaard surface and complete sets of non-separating compressing disks in the two handlebodies. 
This 2-complex can be easily deformed into a branched surface $B$. 
Each branch sector of $B$ is associated with an element in $\pi_1(M)$. We can deform the 2-complex and choose the group elements in such a way that no group element in this construction is negative with respect to a fixed left order on $\pi_1(M)$. By deleting branch sectors whose associated group elements are trivial, we obtain a new branched surface $B_0$. 
The left order of $\pi_1(M)$ gives a splitting instruction so that one can split the branched surface $B_0$ indefinitely. 
The inverse limit of the splitting process is a lamination fully carried by $B_0$. 
A large part of the proof is to show that this lamination can be extended to a taut foliation.

Part of the proof can be easily adapted for certain higher-genus Heegaard diagrams. It is conceivable that there may be a way of modifying Heegaard diagrams so that this construction works in general.

\begin{acknowledgments}
This work was inspired by a lecture given by Sarah Rasmussen at Oberwolfach in February 2020. I would like to thank Sarah  for helpful conversations and thank Mathematisches Forschungsinstitut Oberwolfach for hospitality.
\end{acknowledgments}

\section{Left-orderable groups, branched surfaces, and Heegaard diagrams}\label{Ssetup}

In this section, we review some background on left-orderable groups, branched surfaces, and Heegaard splittings.

\begin{notation}
Throughout this paper, we use $I$ to denote the closed interval $[0,1]$. For any topological space $X$, we use $\Int(X)$, $|X|$, and $\overline{X}$ to denote the interior, number of components, and closure of $X$ respectively.
\end{notation}

\vspace{10pt}
\noindent
\textbf{Left-orderable groups}
\vspace{10pt}

\begin{definition}
A non-trivial group $G$ is \emph{left-orderable} if there exists a strict left-invariant total ordering of elements in $G$, that is, if $f<g$ then $hf<hg$ for any elements $f, g, h \in G$. 
\end{definition}

Left-orderable groups have many interesting properties. For example, every non-trivial element in a left-orderable group has infinite order. 
Many 3-manifold groups are left-orderable. Boyer, Rolfsen, and Wiest \cite{BRW} have shown that a compact irreducible and orientable 3-manifold $N$ has a left-orderable fundamental group if and only if $\pi_1(N)$ surjects onto a left-orderable group. In particular, $\pi_1(N)$ is left-orderable if $H_1(N;\mathbb{Q})\ne 0$. 

Given a left-orderable group $G$ and a left order $<$,
an element $g\in G$ is said to be positive (resp.~negative) if $g>1$ (resp. $g<1$). 
We also use the notation $\le $, that it, $f\le g$ means that either $f<g$ or $f=g$.

\begin{assumption}\label{Assumption}
Throughout the paper, $M$ is a closed, orientable, and irreducible 3-manifold with Heegaard genus 2.  Suppose $\pi_1(M)$ is left-orderable and fix a left order.  
Since Theorem~\ref{Tmain} is known to be true if $H_1(M;\mathbb{Q})\ne 0$ \cite{BRW, G1}, we assume $H_1(M;\mathbb{Q})= 0$.
\end{assumption}

\vspace{8pt}
\noindent
\textbf{Branched surfaces}
\vspace{8pt}

A \emph{branched surface} in $M$ is a union of a finite collection of smooth surfaces glued together forming a compact subspace of $M$ locally modeled on Figure~\ref{Fbranch}(a).

Given a branched surface $B$ in $M$, we use $N(B)$ to denote a regular neighborhood of $B$ and regard $N(B)$ as an $I$-bundle over $B$, see Figure~\ref{Fbranch}(b).

\begin{figure}[h]
	\vskip 0.3cm
	\begin{overpic}[width=4.5in]{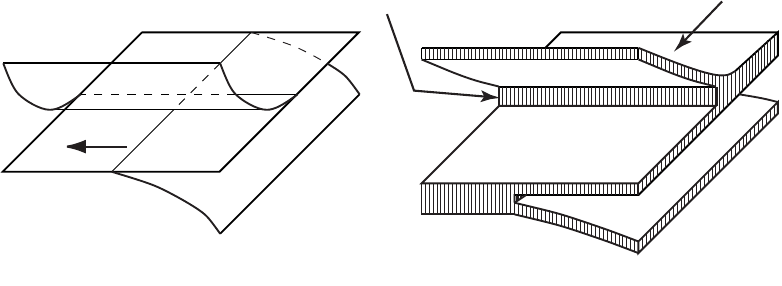}
		\put(15,-3){(a)}
		\put(68,-3){(b)}
		\put(45,36){$\partial_vN(B)$}
		\put(88,37){$\partial_hN(B)$}
	\end{overpic}
	\vskip 0.3cm
	\caption{Branched surface and its fibered neighborhood}\label{Fbranch}
\end{figure}

As illustrated in Figure~\ref{Fbranch}(b),
we divide the boundary of $N(B)$ into two parts: the horizontal boundary $\partial_hN(B)$ and the vertical boundary $\partial_vN(B)$, where $\partial_vN(B)$ consists of non-trivial sub-intervals of the $I$-fibers of $N(B)$ and $\partial_hN(B)$ is transverse to the $I$-fibers.  
In particular, $\partial_vN(B)$ is a collection of annuli. 

Let $\pi\colon N(B)\to B$ be the projection that collapses each $I$-fiber to a point.
The projection $L=\pi(\partial_vN(B))$ is the union of points that do not have neighborhoods (in $B$) homeomorphic to $\mathbb{R}^2$, i.e.~the non-manifold points in $B$. 
$L$ is called the \emph{branch locus} of $B$. 
The projection of a component of $\partial_vN(B)$ is called a component of the branch locus. 
The branch locus is a collection of immersed curves in $B$. 
The direction of the cusp at each arc in $L$ is called the \emph{branch direction} at the arc, see the arrow in Figure~\ref{Fbranch}(a). 
For any component of $B\setminus L$, its closure under the path metric is called a \emph{branch sector} of $B$.

A surface or lamination $\lambda$ is \emph{carried} by $B$ (or $N(B)$) if $\lambda\subset N(B)$ and $\lambda$ meets the $I$-fibers transversely, and $\lambda$ is \emph{fully carried} by $B$ if $\lambda$ meets every $I$-fiber of $N(B)$. 
For any branch sector $s$ of $B$ and any surface $F\subset N(B)$ carried by $B$, we say that $F$ \textit{passes through} the branch sector $s$ if $F$ meets the $I$-fibers of $\pi^{-1}(s)$, where $\pi\colon N(B)\to B$ is the projection.

A \emph{disk of contact} is an embedded disk $D$ in $N(B)$ transverse to the $I$-fibers and with $\partial D\subset \partial_vN(B)$, see Figure~\ref{Fdisk}(a) for a picture in $B$. Note that $D$ may intersect an $I$-fiber of $N(B)$ more than once. We call $D$ a disk of contact with respect to a lamination $\lambda\subset N(B)$ if, in addition, $D\cap\lambda=\emptyset$.  Let $A$ be the component of $\partial_vN(B)$ that contains $\partial D$. We say that $A$ bounds a disk of contact in $N(B)$. 

\begin{figure}[h]
	\begin{overpic}[width=3in]{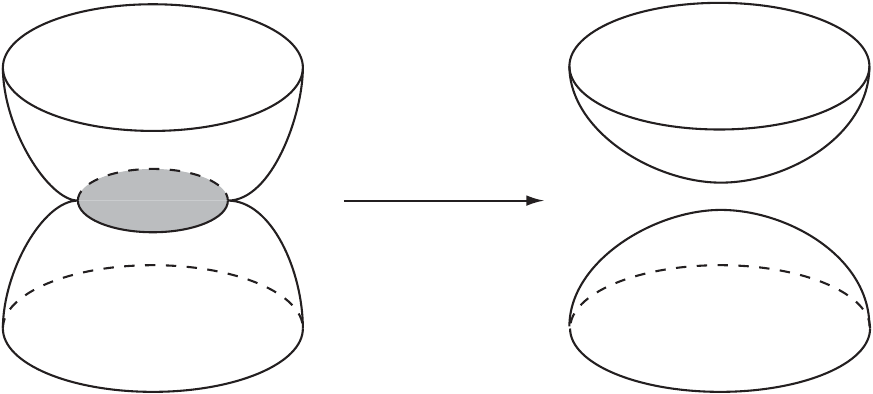}
		\put(16,-7){(a)}
		\put(80,-7){(b)}
		\put(40,24){splitting}
	\end{overpic}
	\vskip 0.4cm
	\caption{Splitting along a disk of contact}\label{Fdisk}
\end{figure}

Let $F\subset N(B)$ be a compact orientable surface transverse to the $I$-fibers of $N(B)$, and let $F\times I$ be a product neighborhood of $F$ with each $\{x\}\times I$ ($x\in F$) a subarc of an $I$-fiber of $N(B)$. 
If we remove $F\times I$, after a small isotopy, $N(B)\setminus\Int(F\times I)$ is a fibered neighborhood $N(B')$ of another branched surface $B'$. We say that $B'$ is obtained by splitting $B$ along $F$ and call $F$ a splitting surface.

If $F$ is a small disk containing a short arc $\alpha$ and $\alpha$ has one or both endpoints in $\partial_vN(B)$, then the splitting along $F$ is illustrated in the 1-dimensional schematic pictures in Figure~\ref{Fsplit1}, where the 3 possible splittings correspond to the 3 possible positions of the arc $\alpha$. 
In fact, any splitting is the result of a sequence of local splittings as illustrated in Figure~\ref{Fsplit1}, plus the splittings along simple disks of contact as shown in Figure~\ref{Fdisk}, see \cite[section 4]{GO} and \cite[section 2.4]{L8} for details. 
Although we usually perform splitting directly on the branched surface as in the top pictures in Figure~\ref{Fsplit1}, by the description above, if $B'$ is obtained by splitting $B$, we view $N(B')\subset N(B)$. 
See \cite[section 2]{L8} for more discussions of splitting branched surfaces.

\begin{figure}[h]
	\vskip 0.3cm
	\begin{overpic}[width=3.5in]{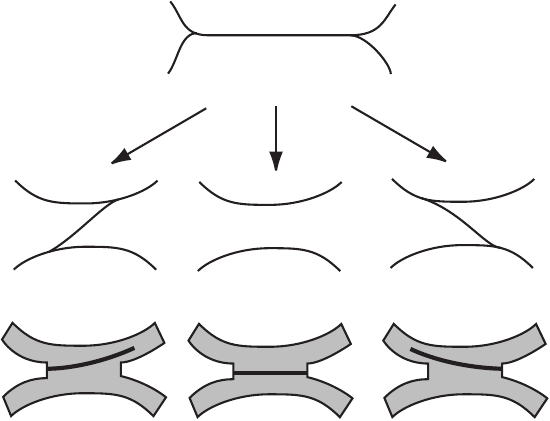}
		\put(16,7.5){$\alpha$}
		\put(47,5.5){$\alpha$}
		\put(81,7){$\alpha$}
		\put(23.5,53){\small{(1)}}
		\put(50.5,52){\small{(2)}}
		\put(74.5,52.5){\small{(3)}}
	\end{overpic}
	\vskip 0.3cm
	\caption{Local splittings}\label{Fsplit1}
\end{figure}

\vspace{8pt}
\noindent
\textbf{Heegaard diagram}
\vspace{8pt}

Let $M=\mathcal{U}\cup_S \mathcal{V}$ be a genus-2 Heegaard splitting along a Heegaard surface $S$. Let $\{U_1, U_2\}$ and $\{V_1, V_2\}$ be complete sets of disjoint non-separating compressing disks in the handlebodies $\mathcal{U}$ and $\mathcal{V}$ respectively, that is, $\mathcal{U}\setminus(U_1\cup U_2)$ and  $\mathcal{V}\setminus(V_1\cup V_2)$ are a pair of 3-balls. 
Let $u_i=\partial U_i$ and $v_i=\partial V_i$ ($i=1,2$) and 
let $\widehat{U}=\{u_1,u_2\}$ and $\widehat{V}=\{v_1,v_2\}$.  The Heegaard surface $S$ together with $\widehat{U}$ and $\widehat{V}$ is called a \textit{Heegaard diagram}.
 
We call the boundary of any non-separating compressing disk in a handlebody a meridian of the handlebody. We say two curves in a surface have tight intersection if their number of intersection points is minimal up to isotopy. 
We may assume the intersection  of $\widehat{U}$ and $\widehat{V}$ is tight.

\begin{lemma}\label{Ldisk}
	Every component of $S\setminus (u_1\cup u_2\cup v_1\cup v_2)$ is a disk.
\end{lemma}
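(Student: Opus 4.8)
The plan is to argue by contradiction, using Assumption~\ref{Assumption} that $M$ is irreducible with Heegaard genus $2$. Suppose some component $R$ of $S\setminus(u_1\cup u_2\cup v_1\cup v_2)$ is not a disk; the goal is to produce a simple closed curve that is essential in $S$ and bounds compressing disks in both handlebodies, which is impossible for such an $M$. First I would record two elementary observations. Since $\{U_1,U_2\}$ is a complete system of non-separating meridian disks, $\mathcal U\setminus N(U_1\cup U_2)$ is a $3$-ball, so the surface obtained by cutting $S$ along $u_1\cup u_2$ is planar; as $R$ is contained in this cut-open surface, $\overline R$ is a planar surface. Also, each of $u_1,u_2,v_1,v_2$ is essential in $S$, being the boundary of a non-separating compressing disk. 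Now if the planar surface $\overline R$ is not a disk, it has at least two boundary circles, so pushing one boundary circle slightly into $\Int(R)$ yields a simple closed curve $\gamma\subset\Int(R)$ that does not bound a disk in $\overline R$; note $\gamma$ is disjoint from all four curves $u_1,u_2,v_1,v_2$.

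The key step is to upgrade this to: $\gamma$ does not bound a disk in $S$. Suppose instead that $\gamma=\partial\Delta$ for an embedded disk $\Delta\subset S$. For each $c\in\{u_1,u_2,v_1,v_2\}$ we have $c\cap\partial\Delta=\emptyset$, so $c\cap\Delta$ is both open and closed in $c$; hence $c$ is either disjoint from $\Delta$ or contained in $\Int(\Delta)$, and the latter would force the essential curve $c$ to bound a disk in $S$, which is absurd. Therefore $\Delta$ is disjoint from $u_1\cup u_2\cup v_1\cup v_2$, and since $\Delta$ is connected with a collar of $\gamma$ lying in $R$, the interior of $\Delta$ lies in a single component of $S\setminus(u_1\cup u_2\cup v_1\cup v_2)$, namely $R$. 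Thus $\Delta\subset\overline R$, so $\gamma$ bounds a disk in $\overline R$ — a contradiction. Hence $\gamma$ is essential in $S$.

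Finally, since $\gamma$ is disjoint from the complete meridian system $\widehat U=\{u_1,u_2\}$, it lies on the $2$-sphere $\partial\bigl(\mathcal U\setminus N(U_1\cup U_2)\bigr)$ and therefore bounds a disk $D_{\mathcal U}$ properly embedded in $\mathcal U$ and disjoint from $N(U_1\cup U_2)$; because $\gamma$ is essential in $S$, $D_{\mathcal U}$ is an essential compressing disk of $\mathcal U$. By the symmetric argument $\gamma$ bounds an essential compressing disk $D_{\mathcal V}$ of $\mathcal V$. Then $\Sigma=D_{\mathcal U}\cup D_{\mathcal V}$ is an embedded $2$-sphere meeting $S$ in the single essential curve $\gamma$, so the Heegaard splitting $M=\mathcal U\cup_S\mathcal V$ is reducible. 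I expect this last step to be where the content really lies: since $M$ is irreducible, $\Sigma$ bounds a ball, and the standard destabilization argument (cap off the ball bounded by the reducing sphere and apply Waldhausen's classification of Heegaard splittings of the $3$-ball) then shows the splitting is stabilized; but a stabilized genus-$2$ splitting destabilizes to a genus-$1$ splitting, contradicting the hypothesis that $M$ has Heegaard genus $2$. This contradiction shows every component of $S\setminus(u_1\cup u_2\cup v_1\cup v_2)$ is a disk, completing the proof.
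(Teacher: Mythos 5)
Your proof is correct and follows essentially the same approach as the paper: locate an essential simple closed curve $\gamma$ inside the non-disk component, use the fact that $\gamma$ is disjoint from the complete meridian systems to produce compressing disks on both sides, and obtain a reducing sphere meeting $S$ in one curve, which by irreducibility of $M$ forces the genus-$2$ splitting to be stabilized, contradicting minimality. Your write-up supplies an extra detail that the paper leaves implicit, namely the verification that a push-off of a boundary circle of $\overline{R}$ remains essential in $S$.
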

\begin{proof}
	Suppose a component $E$ of $S\setminus (u_1\cup u_2\cup v_1\cup v_2)$ is not a disk. Then there is an essential simple closed curve $\gamma$ in $E$. The curve $\gamma$ must be essential in $S$, as $u_i$ and $v_i$ ($i=1,2$) are essential in $S$. Since $\mathcal{U}\setminus(U_1\cup U_2)$ is a 3-ball, $\gamma$ bounds a compressing disk in $\mathcal{U}$. 
	Similarly, $\gamma$ also bounds a compressing disk in $\mathcal{V}$.  The two compressing disks form a 2-sphere that intersects $S$ in a single curve $\gamma$. 
	Since $M$ is irreducible, this 2-sphere bounds a 3-ball in $M$. 
	This implies that the genus-2 Heegaard splitting is stabilized and not of minimal-genus, contradicting the hypothesis that $M$ has Heegaard genus two.
\end{proof}

A Heegaard diagram can be described using a Whitehead graph: The graph $\Gamma(\widehat{V})$ corresponding to $\widehat{V}$ is obtained by setting the 4 boundary curves of $\overline{S\setminus( v_1\cup v_2)}$ as 4 (fat) vertices $v_1^\pm$ and $v_2^\pm$ and the arcs of $(u_1\cup u_2)\setminus(v_1\cup v_2)$ as edges. Similarly, we also have the Whitehead graph $\Gamma(\widehat{U})$ corresponding to $\widehat{U}$.

A genus-2 Heegaard diagram has many remarkable properties, mainly due to the hyperelliptic involution. 
A theorem of Ochiai \cite{Och} says that the Whitehead graph of any genus-two Heegaard diagram of an irreducible 3-manifold is one of the 3 graphs in Figure~\ref{FHD}. The labels $a, b, c, d$ in Figure~\ref{FHD} denote the numbers of parallel arcs and they may be zero. 
See \cite[section 1.3]{LMP} for more discussions of genus-2 Heegaard diagrams and Whitehead graphs. 

\begin{figure}[h]
	\vskip 0.3cm
	\begin{overpic}[width=4.5in]{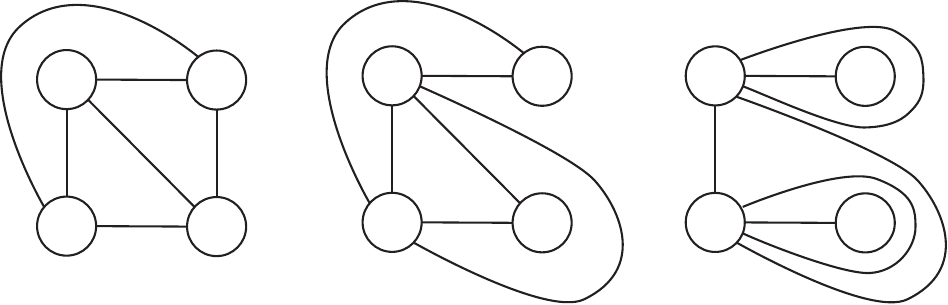}
		\put(5,23){$v_1^+$}
		\put(5,7){$v_1^-$}
		\put(21,23){$v_2^+$}
		\put(21,7){$v_2^-$}
		\put(13,24){$a$}
		\put(13,8.5){$a$}
		\put(13,18){$b$}
		\put(0,29){$b$}
		\put(7.5,15){$c$}
		\put(23,15){$d$}
\put(39.5,23){$v_1^+$}
\put(39.5,7){$v_1^-$}
\put(55,23){$v_2^+$}
\put(55,7){$v_2^-$}
\put(49,24.5){$a$}
\put(49,9){$a$}
\put(50.5,15){$b$}
\put(35,29.5){$b$}
\put(41.5,15){$c$}
\put(60,15.5){$d$}
\put(73,23){$v_1^+$}
\put(73,7){$v_1^-$}
\put(89,23){$v_2^+$}
\put(89,7){$v_2^-$}
\put(84,24.5){$a$}
\put(84,9){$a$}
\put(86,28.5){$b$}
\put(82,12){$b$}
\put(73,15.5){$c$}
\put(96,14){$d$}
		\put(15,-3){\small{(i)}}
		\put(48,-3){\small{(ii)}}
		\put(84,-3){\small{(iii)}}
	\end{overpic}
	\vskip 0.3cm
	\caption{The 3 possible Whitehead graphs}\label{FHD}
\end{figure}

\begin{lemma}\label{LHDparallel}
In $S\setminus (v_1\cup v_2)$, all the subarcs of $u_1$ and $u_2$ connecting $v_1^+$ to $v_2^+$ are parallel.  Similarly, all the subarcs  of $u_1$ and $u_2$ connecting $v_1^-$ to $v_2^-$ are parallel
\end{lemma}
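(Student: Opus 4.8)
The plan is to reduce the statement to Ochiai's classification of Whitehead graphs of genus-two Heegaard diagrams \cite{Och}, quoted above. First I would fix notation: cutting $S$ along $v_1\cup v_2$ produces the surface $P:=\overline{S\setminus(v_1\cup v_2)}$, which by an Euler-characteristic count (or directly from the fact that $\mathcal V\setminus(V_1\cup V_2)$ is a ball) is a four-holed sphere whose four boundary circles are exactly $v_1^+,v_1^-,v_2^+,v_2^-$. The arcs of $(u_1\cup u_2)\setminus(v_1\cup v_2)$ are pairwise disjoint, properly embedded arcs in $P$, and together with the four boundary circles (collapsed to fat vertices) they constitute the Whitehead graph $\Gamma(\widehat V)$; by Lemma~\ref{Ldisk} these arcs cut $P$ into disks. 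I would also record that, because the intersection of $\widehat U$ and $\widehat V$ is tight, $P$ contains no bigon between a component of $u_1\cup u_2$ and a component of $v_1\cup v_2$; in particular no arc of $(u_1\cup u_2)\cap P$ is boundary-parallel in $P$, since an innermost boundary-parallel arc would cut off such a bigon.

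Next I would apply the theorem of Ochiai: $\Gamma(\widehat V)$, viewed as a graph embedded in the four-holed sphere $P$, is one of the three graphs in Figure~\ref{FHD}. In each of these three pictures the edges joining the vertex $v_1^+$ to the vertex $v_2^+$ appear as a single parallel family --- finitely many disjoint arcs bounding a chain of bigons in $P$, with neither $v_1^-$ nor $v_2^-$ trapped between two consecutive arcs --- and likewise the edges joining $v_1^-$ to $v_2^-$. Since the arcs of $(u_1\cup u_2)\setminus(v_1\cup v_2)$ running from $v_1^+$ to $v_2^+$ are precisely these edges, they are all parallel in $P$, which is the assertion; the statement for $v_1^-$ and $v_2^-$ follows identically, and is in fact obtained from the first by applying the hyperelliptic involution of $S$, which can be isotoped to preserve $u_1\cup u_2$ and $v_1\cup v_2$ and which interchanges $v_i^+$ with $v_i^-$.

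The step I expect to require the most care is the passage from the combinatorial picture to genuine parallelism of embedded arcs: one must be sure that Ochiai's list classifies the Whitehead graph together with its embedding into $P$, so that the ``parallel edges'' drawn in Figure~\ref{FHD} are truly parallel as arcs of the four-holed sphere and do not, for instance, run around $P$ on opposite sides of the vertex $v_1^-$. This is exactly the place where the hypotheses behind Ochiai's theorem --- irreducibility of $M$, Heegaard genus precisely two, and the resulting hyperelliptic symmetry --- together with the tightness of the intersection (hence the absence of bigons, which rules out inessential arcs) do the work; with that in hand the proof is a routine inspection of the three cases.
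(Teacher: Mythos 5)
Your proposal is correct and takes essentially the same route as the paper: both reduce the lemma to Ochiai's classification of genus-two Whitehead graphs (Figure~\ref{FHD}) and read off the parallelism of the $v_1^+$--$v_2^+$ (and $v_1^-$--$v_2^-$) edges from the three possible embedded pictures. The paper's proof is a one-line appeal to that figure; you supply the surrounding justification (the four-holed sphere setup, the no-bigon consequence of tight intersection, and the embedding versus abstract-graph concern) that the paper leaves implicit.
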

\begin{proof}
The lemma is evident from the 3 possible pictures in Figure~\ref{FHD}, see the edges labeled $a$ in Figure~\ref{FHD}.
\end{proof} 

Given a Heegaard diagram $(S,\widehat{U}, \widehat{V})$, a \emph{wave} with respect to a curve $u_i$ ($i=1,2$) is an arc $\eta$ such that 
\begin{enumerate}
	\item $\eta\cap(\widehat{U}\cup \widehat{V})=\partial\eta\subset u_i$, 
	\item $\eta$ intersects $ u_i$ from the same side of $ u_i$. 
\end{enumerate}
A wave with respect to $v_j$ is defined similarly. 

Let $\eta$ be a wave with respect to $u_1$. One of the three boundary curves of the pair of pants $P_\eta=\overline{N(\eta\cup u_1)}$ is a parallel copy of $u_1$. To simplify notation, we still use $u_1$ to denote this boundary component of $P_\eta$. 
Denote the other two boundary curves of $P_\eta$ by $u'$ and $u''$. 
One of the two curves, say $u'$, is not parallel to $u_i$ ($i=1,2$).  

\begin{claim*}
The curve $u'$ is non-separating in the Heegaard surface $S$.
\end{claim*}
\begin{proof}[Proof of the Claim]
Suppose $u'$ is separating in $S$.  
Hence, $u'$ divides the Heegaard surface into two once-punctured tori, one of which, denoted by $T_1$, contains the pair of pants $P_\eta$.

Next, consider $v_1$ and $v_2$. It follows from the construction of $P_\eta$ that every component of $v_i\cap P_\eta$ ($i=1,2$) is an arc connecting $u_1$ to either $u'$ or $u''$. 
We claim that $u'$ must be disjoint from $v_1\cup v_2$. 
Suppose on the contrary that $v_i\cap u'\ne\emptyset$ and let $\alpha$ be an arc component of $v_i\cap T_1$. 
As $u'=\partial T_1$, $\partial\alpha\subset u'$. 
The two endpoints of $\alpha$ have opposite signs of intersection with $u'$. 
Since the two components of $\alpha\cap P_\eta$ containing $\partial\alpha$ are arcs connecting $u'$ to $u_1$, there are two points of $\alpha\cap u_1$ with opposite signs of intersection. 
However, as $v_i$ and $u_1$ have tight intersection and since $T_1$ is a once-punctured torus, all the intersection points of $\alpha\cap u_1$ must have the same sign of intersection. 
This is a contradiction. Hence $v_i\cap u'=\emptyset$ for both $i=1,2$. 
This means that $u'$ must be disjoint from both $u_1\cup u_2$ and $v_1\cup v_2$, a contradiction to Lemma~\ref{Ldisk}. 
\end{proof}

By the definition of a wave, $u'$ bounds a disk $U'$ in the handlebody $\mathcal{U}$. 
This claim implies that $\{U', U_2\}$ is a new complete set of compressing disks for the handlebody $\mathcal{U}$. 
The \emph{wave move} along $\eta$ is the operation of replacing the disk $U_1$ with the disk $U'$. 
It follows from Lemma~\ref{Ldisk} that the new set of disks $\{U', U_2\}$ has fewer intersection points with $\widehat{V}$. 
See \cite[section 1.3]{LMP} for more discussions of genus-2 Heegaard diagrams and wave moves.

\begin{definition}\label{Dmin}
Given a Heegaard diagram formed by $\{u_1, u_2\}$ and $\{v_1, v_2\}$, we first fix a pair of meridians $v_s$ and $u_t$ ($s=1$ or $2$ and $t=1$ or $2$) such that $|v_s\cap u_t|$ is minimal among all pairs of the meridians from $\{v_1, v_2\}$ and $\{u_1, u_2\}$.
Define the complexity of the Heegaard diagram to be
$c(\widehat{U}, \widehat{V}) = (|v_s\cap u_t|, |(v_1\cup v_2)\cap u_t|, |(v_1\cup v_2)\cap (u_1\cup u_2)|)$ 
with the lexicographical order. We call $v_s$ and $u_t$ the special meridians for the Heegaard diagram. 
\end{definition}

Suppose the Heegaard diagram has minimal complexity in the sense of Definition~\ref{Dmin}. 
This means that the pair of special meridians $v_s$ and $u_t$ in Definition~\ref{Dmin} has minimal intersection number among all pairs of meridians of $\mathcal{V}$ and $\mathcal{U}$.  
Since the wave move reduces the intersection number of pairs of meridians in the Heegaard diagram, a Heegaard diagram with minimal complexity has no wave.

\vspace{8pt}
\noindent
\textbf{Band sum}
\vspace{8pt}

Consider the Heegaard diagram formed by $\{u_1, u_2\}$ and $\{v_1,v_2\}$. 
We cut $S$ open along $u_1$ and $u_2$ and obtain a 4-hole sphere $S^-$. Denote boundary curves of $S^-$ that correspond to the two sides of $u_1$ and $u_2$ by $u_1^\pm$ and $u_2^\pm$ respectively.   
Let $\alpha_1,\dots,\alpha_m$ be the set of subarcs of $v_1\cup v_2$ that connect $u_1^+$ to $u_2^+$ (suppose there is at least one such arc). 
By Lemma~\ref{LHDparallel}, the $\alpha_i$'s are all parallel in $S^-$. 
Let $R$ be a rectangle in $S^-$ containing $\alpha_1,\dots,\alpha_m$, with two opposite boundary edges in $u_1^+$ and $u_2^+$, and disjoint from other subarcs of $v_1$ and $v_2$. 
Let $N(u_1^+\cup R\cup u_2^+)$ be a small neighborhood of $u_1^+\cup R\cup u_2^+$ in $S^-$. 
The closure of $N(u_1^+\cup R\cup u_2^+)$ is a pair of pants, denoted by $P_R$. We view $P_R$ as a sub-surface of $S$. Let $u'$ be the boundary component of $P_R$ that is not $u_1$ or $u_2$. 
We say that $u'$ is a band sum of $u_1$ and $u_2$ along $R$. 

\begin{lemma}\label{Lbandsum} 
	Suppose no subarc of $v_1\cup v_2$ is a wave with respect to $\{u_1, u_2\}$. Let $u'$ be the band sum of $u_1$ and $u_2$ along $R$ described above. Then
	\begin{enumerate}
		\item The curve $u'$ has tight intersection with $v_1$ and $v_2$. 
		\item No subarc of $v_1\cup v_2$ is a wave with respect to $\{u', u_i\}$ ($i=1$ or $2$).
	\end{enumerate}
\end{lemma}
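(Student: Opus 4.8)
The plan is to set up coordinates on the 4-holed sphere $S^-$ that make the parallel-arc structure transparent, and then argue both parts by contradiction, in each case converting a failure of the conclusion into a wave with respect to $\{u_1,u_2\}$, contradicting the hypothesis. First I would fix the pair of pants $P_R$ with boundary $\partial P_R = u_1 \cup u_2 \cup u'$, and observe that the arcs of $(v_1 \cup v_2) \cap P_R$ are of two types: the arcs $\alpha_1,\dots,\alpha_m$ running between $u_1^+$ and $u_2^+$ (which are contained in $R$ and swept into the single band), and arcs with at least one endpoint on $u'$, which are exactly the "copies" near $u'$ of arcs of $v_1\cup v_2$ that had an endpoint on $u_1^-$ or $u_2^-$ in $S^-$. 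Outside $P_R$, the curves $v_1, v_2$ are unchanged and still meet $u_1, u_2$ tightly. The key bookkeeping observation is that $u'$ is isotopic in $S$ to the curve obtained from $u_1\cup u_2$ by banding along $R$, so each intersection point of $u'$ with $v_1\cup v_2$ corresponds bijectively to an intersection point of $(u_1\cup u_2)\setminus R$ with $v_1\cup v_2$.

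For part (1), I would suppose $u'$ does not meet $v_1\cup v_2$ tightly. Then there is a bigon $D$ in $S$ bounded by a subarc of $u'$ and a subarc $\beta$ of $v_1$ or $v_2$ (I should first rule out, using Lemma~\ref{Ldisk}, that $u'$ is inessential or parallel to $u_1$ or $u_2$ — a band sum of two non-separating meridians forming a complete set cannot itself be trivial or isotopic to either, since $\{u',u_2\}$ or $\{u',u_1\}$ still cuts the handlebody into a ball). Pushing the bigon arc of $u'$ back across the band $R$, the arc $\beta$ gets pushed into an arc $\eta$ with endpoints on $u_1$ or $u_2$; the innermost-bigon assumption plus tightness of $v_1\cup v_2$ with $u_1\cup u_2$ forces $\eta$ to approach its endpoints from the same side, and the fact that $\beta$ crossed the band (i.e. $D$ genuinely involves the banding region, else $D$ would already be a bigon between $v_i$ and $u_1$ or $u_2$) forces $S\setminus(\eta\cup u_i)$ to be connected. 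That is exactly a wave with respect to $\{u_1,u_2\}$ — contradiction. The part that needs care is the case analysis of where the bigon $D$ sits relative to $P_R$ and $R$, in particular whether $\beta$ uses one or both of $u_1^+, u_2^+$, and correctly identifying which of the three boundary curves of $N(\eta\cup u_i)$ plays the role of the non-separating non-parallel curve in the definition of a wave.

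For part (2), assume some subarc $\eta'$ of $v_1\cup v_2$ is a wave with respect to $\{u',u_i\}$ for $i=1$ or $2$; by part~(1) we may take its endpoints to be genuine tight intersection points of $\eta'$ with $u'$, and $\eta'$ approaches $u'$ from one side with $S\setminus(\eta'\cup u')$ connected. I would again push $\eta'$ back across the band $R$: its endpoints on $u'$ move to endpoints on $u_1^+ \cup u_2^+$, and $\eta'$ becomes an arc $\eta''$ which is a subarc of $v_1\cup v_2$ (up to isotopy rel the rest of the diagram) with endpoints on $u_1\cup u_2$. The connectedness of $S\setminus(\eta'\cup u')$ translates, via the band, to connectedness of $S\setminus(\eta''\cup(u_1\cup u_2))$, hence in particular $\eta''$ (or a subarc of it using the band structure) yields a wave with respect to $\{u_1,u_2\}$, or $\eta''$ connects across the band in a way that lets one re-band to find such a wave. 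The main obstacle here, and in my view the genuinely delicate point of the whole lemma, is that the "same side" condition (condition (2) in the definition of a wave) can be broken or created by the band: an arc hitting $u'$ from one side may hit $u_1$ and $u_2$ from opposite sides, or vice versa. Resolving this requires tracking orientations/co-orientations carefully through $P_R$ and using the specific structure from Ochiai's classification (Figure~\ref{FHD}) and Lemma~\ref{LHDparallel} — namely that the $\alpha_i$'s being parallel pins down exactly how the band is attached, so that the side-data transforms predictably. I expect the proof to proceed by reducing to a small number of model configurations in $P_R$ and checking each, rather than by a single slick argument.
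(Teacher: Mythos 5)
Your approach---push arcs back across the band and recover a wave for $\{u_1,u_2\}$---is a genuinely different route from the paper's, and you have correctly identified where it is fragile: the ``same side'' condition in the definition of a wave does not obviously transform predictably under the band move. The trouble is that you have flagged this gap rather than closed it. In particular, your plan for part~(2) proposes ``reducing to a small number of model configurations in $P_R$'' and invoking Ochiai's classification, but you do not actually carry this out, and it is not clear the case analysis terminates cleanly, since the wave $\eta'$ may wind through the band region many times with endpoints far apart on $u'$.

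The paper's proof sidesteps the side-tracking issue entirely. Instead of pushing arcs across the band dynamically, it decomposes $S$ statically as $P_R\cup P_R'$ and establishes the key structural fact that \emph{no} component of $(v_1\cup v_2)\cap P_R$ or of $(v_1\cup v_2)\cap P_R'$ has both endpoints on the same boundary curve of its pair of pants. For $P_R$ this is immediate from how $R$ was chosen. For $P_R'$ it uses an ingredient absent from your proposal: the hyperelliptic involution symmetry of a genus-two diagram guarantees a subarc $\xi$ of $v_1\cup v_2$ in $P_R'$ joining $u_1^-$ to $u_2^-$, and $\xi$ cuts $P_R'$ into an annulus, which forces any arc with both endpoints on $u'$ to be $\partial$-parallel and hence, by the no-wave hypothesis and the maximality of $R$, nonexistent. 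Given this structural fact, both (1) and (2) follow in one stroke: a bigon or wave arc $\gamma$ based on $u'$ (or on $u_1$) with $\Int(\gamma)$ disjoint from $u'\cup u_1$ would have to meet $u_2$ at least twice, producing a subarc of $\gamma$ properly embedded in $P_R$ or $P_R'$ with both endpoints on $u_2$ --- contradiction. You should look for this kind of decomposition argument; as written, your proposal is a plausible sketch but does not yet constitute a proof.
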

\begin{proof}
First consider the pair of pants $P_R$ in the construction above. Each component of $(v_1\cup v_2)\cap P_R$ is either an arc $\alpha_i$ connecting $u_1^+$ to $u_2^+$ or an arc connecting $u'$ to $u_j^+$ ($j=1,2$). In particular, no arc of $(v_1\cup v_2)\cap P_R$ has both endpoints in the same boundary curve of $P_R$. 

Let $P_R'=S\setminus\Int(P_R)$ be the complement of $P_R$ in $S$. So $P_R'$ is also a pair of pants. 
By the symmetry from the hyperelliptic involution on $S$, there must be a subarc $\xi$ of $v_1\cup v_2$ in $P_R'$ connecting $u_1^-$ to $u_2^-$. 
The arc $\xi$ cuts $P_R'$ into an annulus, which implies that, if a component of $(v_1\cup v_2)\cap P_R'$ has both endpoints in $u'$, then it must be a $\partial$-parallel arc in $P_R'$. 
However, in the construction above, the rectangle $R$ contains all the arcs $\alpha_i$'s connecting $u_1^+$ to $u_2^+$.  Since no subarc of $v_1\cup v_2$ is a wave with respect to $u_1$ and $u_2$, this implies that no component of $(v_1\cup v_2)\cap P_R'$ is a $\partial$-parallel arc with both endpoints in $u'$.  
Moreover, as $\partial P_R'=u'\cup u_1^-\cup u_2^-$, any component of $(v_1\cup v_2)\cap P_R'$ with both endpoints in the same boundary curve $u_i^-$ is a wave with respect to $u_i$, a contradiction to the hypothesis. 
Thus, no component of $(v_1\cup v_2)\cap P_R'$ has both endpoints in the same boundary curve of $P_R'$.

Now consider the set of meridians $\{u', u_1\}$ (the proof for $\{u', u_2\}$ is the same).  Suppose there is a subarc of $v_i$ in $S\setminus(u'\cup u_1)$, denoted by $\gamma$, with both endpoints in $u'$ and connecting $u'$ from the same side. Since there is no such arc totally in $P_R$ or $P_R'$, $\gamma\cap u_2\ne\emptyset$.  
So $u_2$ divides $\gamma$ into a collection of subarcs and each subarc is properly embedded in either $P_R$ or $P_R'$. 
Since $\gamma$ connects $u'$ from the same side and since $\Int(\gamma)\cap (u'\cup u_1)=\emptyset$, $\gamma\cap u_2$ contains more than one point, which implies that a subarc of $\gamma$ has both endpoints in $u_2$ and is properly embedded in either $P_R$ or $P_R'$. 
This contradicts our conclusions above. 
Therefore, there is no such subarc $\gamma$, and 
this means that (1) $u'$ and $v_1\cup v_2$ do not form any bigon and hence have tight intersection and (2) no subarc of $v_1\cup v_2$ is a wave with both endpoints in $u'$. 
Similarly, the argument also implies that no subarc of $v_1\cup v_2$ is a wave with both endpoints in $u_1$. 
\end{proof}

\vspace{8pt}
\noindent
\textbf{Strongly irreducible Heegaard splitting}
\vspace{8pt}

A Heegaard splitting is said to be \textit{reducible} if there is an essential curve on the Heegaard surface that bounds disks in both handlebodies.  
A genus-$g$ ($g\ge 2$) Heegaard splitting $M=\mathcal{U}\cup\mathcal{V}$ is said to be \textit{weakly reducible} if there is a compressing disk $U$ of $\mathcal{U}$ and a compressing disk $V$ of $\mathcal{V}$ such that $\partial U\cap\partial V=\emptyset$. If a Heegaard splitting is not weakly reducible, then it is said to be \textit{strongly irreducible}, see \cite{CG,ST}. 

A minimal-genus Heegaard splitting of an irreducible 3-manifold is always irreducible. 
If a Heegaard splitting is irreducible but weakly reducible, by compressing the Heegaard surface simultaneously on both sides, one can obtain an incompressible surface, see \cite{CG}. 
This implies that if a closed and irreducible 3-manifold $M$ has Heegaard genus two, then any genus-$2$ Heegaard splitting of $M$ is always strongly irreducible. Thus, in this paper, our Heegaard splitting is strongly irreducible.

\section{Construct branched surfaces from a Heegaard diagram}\label{Sbr}

Fix a point $X\in\mathcal{U}\setminus(U_1\cup U_2)$ and a point $Y\in\mathcal{V}\setminus(V_1\cup V_2)$ in the two handlebodies.  
Let $\mathcal{S}=\{s_0, s_1, \dots,s_n\}$ be the set of components of $S\setminus (\partial U_1\cup\partial U_2\cup\partial V_1\cup\partial V_2)$.  By Lemma~\ref{Ldisk}, each $s_i$ is a disk. 
For each disk $s_i$ in $\mathcal{S}$, $i=0,\dots, n$, take a path $l_i$ from $X$ to $Y$ that intersects $s_i$ in a single point. 
We may suppose $l_i$ does not intersect $U_j$, $V_j$, $j=1,2$, or any other disk in $\mathcal{S}$.  
We fix a special disk $s_0$ in $\mathcal{S}$. For any disk $s_i$ in $\mathcal{S}$, let $\gamma_i=l_{0}\cdot l_i^{-1}$ be the loop that starts from the basepoint $X$, first travels along the path $l_{0}$ to $Y$ and then travels along $l_i^{-1}$ back to $X$.  So $\gamma_i$ represents an element in $\pi_1(M, X)$. 
To simplify notation, we also use $\gamma_i$ to denote the element $[\gamma_i]$ in $\pi_1(M, X)$. We call $\gamma_i$ the element of $\pi_1(M)$ associated with the disk $s_i$.  Clearly $\gamma_0=1$ in $\pi_1(M)$.

In this paper, we  are mainly interested in the orders of these elements $\gamma_i$'s with respect to a fixed left order of $\pi_1(M)$. 
However, the definition of $\gamma_i$ depends on the choice of $s_0$. 
Next, we show that the choice of the special disk $s_0$ does not affect the relative orders of these elements. 
Define another set of elements $\eta_i$'s using another disk $s_m\in \mathcal{S}$ instead of $s_0$. More precisely, $\eta_i=l_m\cdot l_i^{-1}$. 
Let $\delta=l_m\cdot l_0^{-1}$. Clearly, $\eta_i=\delta\cdot\gamma_i$. 
Since this is a left order, for any pair of disks $s_i$ and $s_j$, $\gamma_i>\gamma_j$ if and only if $\eta_i>\eta_j$. 
This means that changing the special disk $s_0$ does not change the relative orders among these associated elements. 
Without loss of generality, we choose the special disk $s_0$ to be a disk whose associated group element has minimal order among all the disks in $\mathcal{S}$.  Note that there may be more than one disk with minimal order, so the choice of $s_0$ is not unique. 
Since $\gamma_0=1$, we now have $\gamma_i\ge 1$ in $\pi_1(M)$ for all the disks $s_i\in\mathcal{S}$. 

Let $g_1$ and $g_2$ be a pair of loops in the handlebody $\mathcal{U}$  based at the point $X$ that transversely intersect $U_1$ and $U_2$ respectively in a single point. 
We also assume that $g_1\cap U_2=g_2\cap U_1=\emptyset$.  
To simplify notation, we also use $g_1$ and $g_2$ to denote the corresponding elements $[g_1]$ and $[g_2]$ in $\pi_1(M, X)$. Since $M$ is not a lens space, $g_1$ and $g_2$ are non-trivial elements in $\pi_1(M)$.  
By switching to $g_i^{-1}$ if necessary, we may choose the direction of $g_i$ so that $g_i$ has positive order i.e.~$g_i>1$ in $\pi_1(M)$, $i=1,2$. 
We fix a normal direction for each disk $U_i$ that agrees with the direction of the loop $g_i$.
Each curve $\partial U_i$ has an induced normal direction in the Heegaard surface $S$.

\begin{lemma}\label{Ldirection}
Suppose a pair of disks $s_p$ and $s_q$ ($0\le p, q\le n$) in $\mathcal{S}$ are adjacent and share an edge $e$ and suppose $e\subset\partial U_i$ for some $i=1,2$. 
If the induced normal direction of $e$ points from $s_p$ to $s_q$, then $\gamma_q>\gamma_p$ in $\pi_1(M)$.
\end{lemma}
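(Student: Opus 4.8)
The plan is to compare the loops $\gamma_p = l_0 \cdot l_p^{-1}$ and $\gamma_q = l_0 \cdot l_q^{-1}$ by relating them through a loop that crosses the disk $U_i$ once. Since $\gamma_q \gamma_p^{-1} = l_0 \cdot l_q^{-1} \cdot l_p \cdot l_0^{-1}$, which is conjugate to $l_q^{-1} \cdot l_p$, and since a left order is invariant under left multiplication, $\gamma_q > \gamma_p$ is equivalent to $l_q^{-1} \cdot l_p > 1$, i.e.\ to $l_p \cdot l_q^{-1} > 1$ after reindexing the comparison appropriately; so it suffices to understand the loop $\beta = l_p \cdot l_q^{-1}$ (or its reverse), which goes from $X$ to $Y$ through $s_p$ and back through $s_q$.

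First I would analyze what $\beta$ looks like near the shared edge $e \subset \partial U_i$. The two disks $s_p$ and $s_q$ sit on the Heegaard surface $S$, adjacent along $e$. The paths $l_p$ and $l_q$ each puncture $S$ once, at interior points of $s_p$ and $s_q$ respectively, and otherwise lie in the two handlebodies avoiding the compressing disks. The key geometric fact is that the disk $U_i$ separates a regular neighborhood of itself in $\mathcal U$, and the edge $e$ lies on $\partial U_i$; since the induced normal direction of $e$ points from $s_p$ to $s_q$, a short arc crossing $e$ from $s_p$'s side to $s_q$'s side crosses $U_i$ in the positive direction (the direction agreeing with $g_i$). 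The plan is to homotope $\beta$ (rel basepoint $X$) to a loop that stays in a neighborhood of $s_p \cup e \cup s_q \cup U_i$ and reads off as $g_i$ (or as a conjugate of $g_i$, or at worst $g_i$ times something from the handlebody $\mathcal V$ that is trivial): concretely, push the "go out through $s_p$, come back through $s_q$" loop across the bigon-like region so that it becomes a meridian loop for $U_i$. Because $l_p$ and $l_q$ both end at $Y$ in $\mathcal V \setminus (V_1 \cup V_2)$, which is a ball, the $\mathcal V$-portion contributes nothing, and because they start at $X$ in $\mathcal U \setminus (U_1\cup U_2)$, also a ball, the only essential crossing is the single crossing of $U_i$. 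Hence $\beta$ (with the correct orientation) equals $g_i$ in $\pi_1(M)$, and since $g_i > 1$ we get the desired strict inequality $\gamma_q > \gamma_p$.

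The main obstacle I expect is making the "only essential crossing is $U_i$" claim rigorous: one must argue carefully that the concatenated path $l_p \cup l_q$ can be homotoped, rel endpoints and within the appropriate complement of the compressing disks, so that all of its interaction with the rest of the Heegaard diagram is trivialized and precisely one transverse intersection with $U_i$ survives, counted with the right sign. This is where the disk structure of $\mathcal S$ (Lemma~\ref{Ldisk}), the ball structure of $\mathcal U \setminus (U_1 \cup U_2)$ and $\mathcal V \setminus (V_1\cup V_2)$, and the fact that the $l_i$ were chosen to avoid all other disks in $\mathcal S$ and all $U_j, V_j$ all get used. I would set it up by working in the ball $\mathcal U \setminus (U_1 \cup U_2)$: the two arcs $l_p \cap \mathcal U$ and $l_q \cap \mathcal U$ start at $X$ and exit through the two copies of $U_i$ (the two faces of the cut-open handlebody) at points lying in $s_p$ and $s_q$; since the ball is simply connected these two arcs are homotopic rel their starting point $X$ to a standard arc, and the closed-up loop then visibly wraps once around $U_i$ in the $g_i$ direction precisely because the normal direction of $e$ points from $s_p$ to $s_q$. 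I would be careful about orientation bookkeeping — whether the answer is $g_i$, $g_i^{-1}$, or a conjugate — since the statement only claims $\gamma_q > \gamma_p$ and $g_i > 1$, so I need the sign to come out positive, which is exactly dictated by the hypothesis on the normal direction of $e$ together with the earlier normalization $g_i > 1$ and the choice of normal direction for $U_i$ agreeing with $g_i$.
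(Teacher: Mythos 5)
Your proposal is aimed at the same fact the paper's proof uses — $\gamma_p^{-1}\gamma_q = g_i$, hence $\gamma_q = \gamma_p g_i > \gamma_p$ by left-invariance — and your geometric plan for seeing that the connecting loop wraps once around $U_i$ (homotope the $\mathcal V$-part into the ball $\mathcal V\setminus(V_1\cup V_2)$, push across $e$, use that $\mathcal U\setminus(U_1\cup U_2)$ is also a ball) is exactly what the paper's Figure~\ref{Finequal}(a) encodes in picture form. So the approach is right.

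However, your left-order reduction at the start contains a genuine error. You compute $\gamma_q\gamma_p^{-1}$, but for a \emph{left} order the relevant quantity is $\gamma_p^{-1}\gamma_q$: the valid manipulation is $\gamma_q>\gamma_p \iff \gamma_p^{-1}\gamma_q>1$, and $\gamma_p^{-1}\gamma_q = (l_0 l_p^{-1})^{-1}(l_0 l_q^{-1}) = l_p l_q^{-1}$, directly, with no conjugation needed. Your chain instead passes through $l_q^{-1}l_p$ and then says this positivity condition is ``i.e.''\ the same as $l_p l_q^{-1}>1$ ``after reindexing'' — but $l_q^{-1}l_p$ and $l_p l_q^{-1}$ are conjugate, not equal, and a left order need not be conjugation-invariant (that would be a bi-order). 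As written, two of your asserted equivalences are false; you happen to land on the correct target $\beta = l_p l_q^{-1}$ anyway, so the remainder of your argument goes through, but the reduction step needs to be replaced by the one-line left-multiplication by $\gamma_p^{-1}$.
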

\begin{proof}
 As illustrated in Figure~\ref{Finequal}(a), the loop $\gamma_p\cdot g_i$ is homotopic to the loop $\gamma_q$, i.e. $\gamma_q=\gamma_p g_i$ in $\pi_1(M)$. Since $g_i>1$ and this is a left order, we have $\gamma_q=\gamma_p g_i>\gamma_p$.  
\end{proof}

\begin{figure}[h]
	\begin{overpic}[width=4.5in]{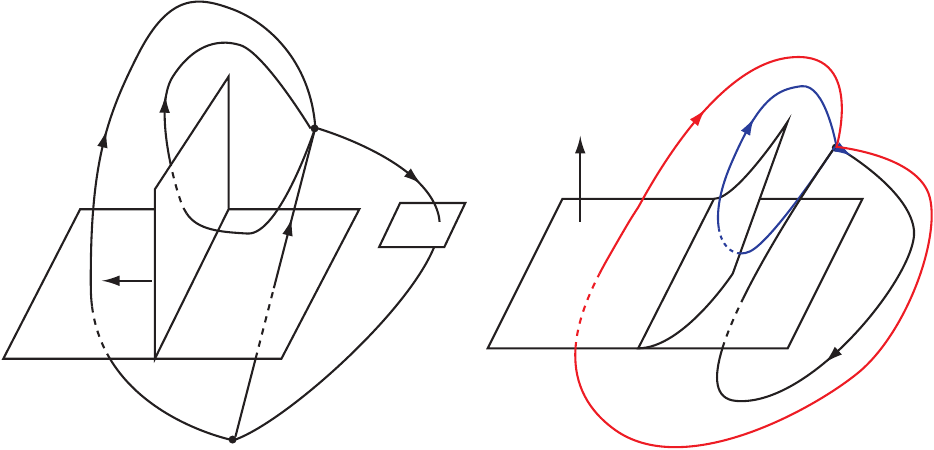}
		\put(34,35){$X$}
		\put(91,33){$X$}
		\put(22.5,2){$Y$}
		\put(5,12){$s_q$}
		\put(23,14){$s_p$}
		\put(43,23){$s_0$}
		\put(17,19){$U_i$}
		\put(19,39){$g_i$}
		\put(82,14){$P$}
		\put(57,14){$Q$}
		\put(74,17){$R$}
		\put(91,14){$p$}
		\put(69,31){$q$}
		\put(77,32){$r$}
		\put(20,-6){(a)}
		\put(72,-6){(b)}
	\end{overpic}
	\vskip 0.5cm
	\caption{Associated loops of adjacent disks}\label{Finequal}
\end{figure}

Now we consider the handlebody $\mathcal{V}$ and the pair of disks $V_1$ and $V_2$.  Recall that the point $Y$ lies in $\mathcal{V}\setminus(V_1\cup V_2)$ and $l_0$ is a path connecting $X$ to $Y$ through $s_0$.  
Let $h_1'$ and $h_2'$ be loops in $\mathcal{V}$ based at $Y$ such that $h_i'$ transversely intersects $V_i$ in a single point and $h_1'\cap V_2=h_2'\cap V_1=\emptyset$. Let $h_i$ be the loop $l_0\cdot h_i'\cdot l_0^{-1}$ based at $X$, $i=1,2$. 
Again, since $M$ is not a lens space, both $h_1$ and $h_2$ are essential loops in $M$. 
We may choose the orientations of $h_1'$ and $h_2'$ so that both $h_1$ and $h_2$ are positive elements, i.e.~$h_i>1$ for both $i$. 
Assign a normal direction to each disk $V_i$ that agrees with the orientation of $h_i'$.

Next we deform the 2-complex $S\cup U_1\cup U_2\cup V_1\cup V_2$ into a branched surface. First, assign a normal direction to $S$ that points from $\mathcal{V}$ to $\mathcal{U}$. Then, deform the 2-complex into a transversely oriented branched surface so that its transverse orientation agrees with the normal directions of $U_1$, $U_2$, $V_1$, $V_2$ and $S$. For example, the local picture of the 2-complex in Figure~\ref{Finequal}(a) is deformed into a branched surface  in Figure~\ref{Finequal}(b). 
Denote the resulting branched surface by $B$.  

\begin{lemma}\label{Lnoclosed}
The only closed and connected surface carried by $B$ is the Heegaard surface $S$.
\end{lemma}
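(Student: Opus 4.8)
The plan is to show that any closed connected surface $F$ carried by $B$ has the same non-negative weight vector on the sectors of $B$ as the Heegaard surface $S$, and is therefore isotopic to $S$. First I would record the combinatorics of $B$: since $B$ is obtained by smoothing the $2$-complex $S\cup U_1\cup U_2\cup V_1\cup V_2$, and the disks $U_i,V_j$ meet $S$ only along $\partial U_i=u_i$ and $\partial V_j=v_j$, the branch locus of $B$ is (a perturbation of) $u_1\cup u_2\cup v_1\cup v_2$, with double points at the points of $u_i\cap v_j$, and the sectors of $B$ are exactly the disks $s_0,\dots,s_n$ of $\mathcal S$ together with the four disks $U_1,U_2,V_1,V_2$. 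Along each branch curve $u_i$ the two local sheets of $S$ are mutually tangent and form the smooth branch of $B$, while $U_i$, being transverse to $S$ along $u_i$, is the cusped branch; hence the switch equation along $u_i$ has the form (weight on one adjacent $S$-sector) $=$ (weight on the other adjacent $S$-sector) $+$ (weight on $U_i$), and similarly for $v_j$. In particular the weight vector assigning $1$ to each $s_i$ and $0$ to each $U_i,V_j$ satisfies all switch equations, which is just the statement that $S$ is carried by $B$.

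The heart of the argument is to pin down the weights $w(s_i)$ of $F$ on the $S$-sectors, and here I would use the paths $l_i$ from Section~\ref{Sbr}. Since $B$ is transversely oriented and $M$ is orientable, $F$ is orientable; and since $H_1(M;\mathbb Q)=0$ by Assumption~\ref{Assumption}, we have $H_2(M;\mathbb Z)=0$, so the closed orientable surface $F$ separates $M$, say $M=M^+\cup_F M^-$. For any path from $X$ to $Y$ the algebraic intersection number with the separating surface $F$ is $0$ if $X$ and $Y$ lie on the same side of $F$ and $\pm1$ otherwise; in particular it equals the same integer $\varepsilon\in\{0,\pm1\}$ for every $l_i$. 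On the other hand, by construction $l_i$ meets $N(B)$ only in a single subarc of one $I$-fiber lying over the sector $s_i$, so $F\cap l_i$ consists of $w(s_i)$ points, all crossings of that fiber in the co-orientation direction; hence $w(s_i)=|F\cdot l_i|=|\varepsilon|$. Thus either $w(s_i)=0$ for every $i$, or $w(s_i)=1$ for every $i$.

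Finally I would feed this back into the switch equations. In either case the equation along each branch curve $u_i$ (resp. $v_j$), applied to the two adjacent $S$-sectors which now carry equal weight, forces $w(U_i)=0$ (resp. $w(V_j)=0$). If all $w(s_i)=0$ then all weights of $F$ vanish, so $F=\emptyset$, contradicting that $F$ is a non-empty closed surface; hence all $w(s_i)=1$, and $F$ carries weight $1$ on every $s_i$ and $0$ on $U_1,U_2,V_1,V_2$ — exactly the weight vector of $S$. A surface carried by $B$ with this weight vector is, by connectedness, a single parallel copy of $S$ inside $N(B)$, so $F$ is isotopic to $S$, proving the lemma.

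The step I expect to require the most care is the bookkeeping of sectors and switch equations near the double points of the branch locus, i.e.\ the points of $u_i\cap v_j$: I would need to check that these contribute no sectors beyond sub-pieces of the $s_i$, $U_i$ and $V_j$ already accounted for, and that their switch equations are consequences of the ones along the smooth parts of $u_1,u_2,v_1,v_2$, so that the computation $w(U_i)=w(V_j)=0$ really uses only the equations written above. Everything else — the separation argument and the intersection count with the $l_i$ — is routine once $H_2(M;\mathbb Z)=0$ is in hand.
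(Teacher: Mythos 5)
Your proof is correct, and it reaches the conclusion by a genuinely different route from the paper. You show that $F$ separates (because $H_1(M;\mathbb{Q})=0$ forces $H_2(M;\mathbb{Z})=0$), then use the paths $l_i$ from $X$ to $Y$ to deduce that the weights $w(s_i)$ are all equal to $|\varepsilon|\in\{0,1\}$, and finally feed this into the switch equations to force $w(U_i)=w(V_j)=0$, ruling out $\varepsilon=0$ since $F$ is non-empty. The paper's argument uses the same homological hypothesis but works with the \emph{dual loops} $g_i, h_i$ of Section~\ref{Sbr} rather than the $l_i$: since $g_i$ meets $B$ exactly once, in the sector $U_i$, the algebraic intersection number $g_i\cdot F=\pm w(U_i)$, so $w(U_i)>0$ would make $F$ non-separating; hence $w(U_i)=w(V_j)=0$ directly, without invoking switch equations, and then $F\subset S\times I$ transverse to fibers must be parallel to $S$. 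Your version trades that directness for an argument that never needs the $g_i,h_i$ and instead uses the transversals $l_i$ that were introduced to define the elements $\gamma_i$; both proofs ultimately rest on the single fact that a loop or properly based arc meeting $N(B)$ in exactly one $I$-fiber has algebraic intersection with $F$ equal, up to sign, to the weight of $F$ on the corresponding sector. The one step you flag — checking that the double points of the branch locus contribute no independent switch relations and no extra sectors — is indeed benign here, since the deformation of the $2$-complex produces sectors exactly as you describe, but your proof would be slightly cleaner if you bypassed switch equations altogether and applied the same intersection-number count to $g_i$ and $h_i$ to get $w(U_i)=w(V_j)=0$ directly, as the paper does.
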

\begin{proof}
Suppose $F$ is a closed and connected surface carried by $B$. Then $F$ has a normal direction induced from the transverse orientation of $B$.  This also implies that $F$ must be orientable.

Since the disks $U_i$ and $V_i$ are non-separating in the respective handlebodies $\mathcal{U}$ and $\mathcal{V}$, and since $F$ admits a normal direction compatible with the transverse orientation of $B$, if $F$ passes through any disk sector $U_i$ or $V_j$, then there is a path connecting the plus side of $F$ to its minus side in $M\setminus F$, which implies that $F$ is non-separating in $M$ and $H_1(M;\mathbb{Q})$ is non-trivial.  Since we have assumed that $H_1(M;\mathbb{Q})$ is trivial in Assumption~\ref{Assumption}, $F$ does not pass through any disk $U_i$ or $V_j$ and this means that $F$ is parallel to the Heegaard surface $S$.
\end{proof}

By our construction, the disks $U_1$, $U_2$, $V_1$, $V_2$ and $s_i$ ($i=0,\dots, n)$ are the branch sectors of $B$, and the branched sectors are associated with loops $g_1$, $g_2$, $h_1$, $h_2$ and $\gamma_i$ ($i=0,\dots, n)$ respectively. Except for $\gamma_0$, each loop transversely intersects the corresponding branched sector exactly once. Moreover, $g_1$, $g_2$, $h_1$, $h_2$ are all positive elements in $\pi_1(M)$ and each $\gamma_i$ is either positive or trivial in $\pi_1(M)$.

For each branch sector $P$ of $B$, we call the two sides of $P$ plus and minus sides, where the transverse orientation at $P$ points from the minus side to the plus side.

\begin{lemma}\label{Lineq}
Let $P$, $Q$, and $R$ be branch sectors of $B$ that meet locally at an arc in the branch locus. Suppose the normal directions of $P$, $Q$, and $R$ are as in Figure~\ref{Finequal}(b), i.e.~ $R$ is on the plus side of $P\cup Q$ and the branch direction (at the branch locus $\partial P\cap\partial Q$) points from $P$ to $Q$. Let $p$, $q$ and $r$ be the elements of $\pi_1(M)$ associated with $P$, $Q$ and $R$ respectively. Then $q=pr$ and $q\ge p$. In particular, $q=p$ if and only if $r=1$ in $\pi_1(M)$.
\end{lemma}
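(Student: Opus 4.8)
The plan is to argue exactly as in the proof of Lemma~\ref{Ldirection}, but now phrased in terms of the branched surface $B$ and its associated loops rather than directly in terms of the Heegaard diagram. The key geometric observation is that the three branch sectors $P$, $Q$, $R$ meeting along an arc of the branch locus arise from the deformation of the $2$-complex $S\cup U_1\cup U_2\cup V_1\cup V_2$ near a point where the Heegaard surface meets one of the compressing disks; the branch direction records which side of the disk one crosses. So the content to extract is: crossing from $P$ to $Q$ is homotopic to concatenating the loop $\gamma_P$ (associated with $P$) with the loop associated with the branch sector $R$ that sits on the plus side of the branch locus, since $R$ is the sector ``being merged into'' at the cusp. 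That is, $q = p r$ in $\pi_1(M)$, which is precisely the relation drawn in Figure~\ref{Finequal}(b).

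Concretely, first I would observe that in the deformation producing $B$, each of the disks $U_1, U_2, V_1, V_2$ becomes a branch sector, and near its boundary the branch locus inherits the normal direction of that disk; the associated loop $g_i$ or $h_i$ is the loop crossing that disk once. Then the local picture of Figure~\ref{Finequal}(b) is, up to homotopy, the same as the local picture of Figure~\ref{Finequal}(a): the sector $R$ on the plus side plays the role of the loop $g_i$ (or $h_i$), with the convention that traversing from the minus side to the plus side of $R$ represents $r$. Reading off the composite path $\gamma_P$ followed by $r$ shows it is homotopic to $\gamma_Q$ relative to the basepoint, giving $q = pr$.

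For the inequality, I would invoke that the left order is left-invariant: from the construction in Section~\ref{Sbr} we have arranged that every branch sector of $B$ is associated with an element that is positive or trivial, and in particular $r \ge 1$. Multiplying on the left by $p$ preserves the order, so $q = pr \ge p\cdot 1 = p$. The final ``if and only if'' is immediate: $q = p$ iff $pr = p$ iff $r = 1$ in $\pi_1(M)$, using that a left-orderable group is cancellative.

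I do not expect a serious obstacle here; the one point requiring care is getting the homotopy bookkeeping right, i.e.~verifying that it is the plus-side sector $R$ (and not some other nearby sector) whose loop appears, and that the orientation conventions established when assigning normal directions to $S$, the $U_i$, and the $V_i$ make the composite come out as $pr$ rather than $rp$ or $pr^{-1}$. This is exactly the local computation already carried out in Lemma~\ref{Ldirection} via Figure~\ref{Finequal}, so the argument amounts to checking that Figure~\ref{Finequal}(b) is the correct schematic translation of Figure~\ref{Finequal}(a) under the deformation of the $2$-complex into $B$.
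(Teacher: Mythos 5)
Your proposal follows the paper's argument exactly: read $q = pr$ from the local picture in Figure~\ref{Finequal}(b) just as Lemma~\ref{Ldirection} did, then use $r \ge 1$ and left-invariance to get $q \ge p$, and cancellation for the last claim. The only difference is that you spell out the homotopy bookkeeping that the paper leaves as ``clear from the figure,'' which is fine.
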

\begin{proof}
The proof is the same as that of Lemma~\ref{Ldirection}. Since $R$ is on the plus side of $P\cup Q$, it is clear from Figure~\ref{Finequal}(b) that $q=pr$. Since the associated element for each branch sector is non-negative, we have $r\ge 1$.  Since this is a left order, $q=pr\ge p$. Moreover, $q=p$ if and only if $r=1$ in $\pi_1(M)$.
\end{proof}

\begin{lemma}\label{Lsource}
Suppose $\gamma_i$ is trivial in $\pi_1(M)$. Then the branch direction at each boundary edge of $s_i$ must point out of $s_i$. 
\end{lemma}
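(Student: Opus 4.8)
The plan is to argue by contradiction using Lemma~\ref{Lineq}, together with two facts established above: every disk $s_j\in\mathcal S$ has $\gamma_j\ge 1$ (this was arranged by the choice of the special disk $s_0$), and each of $g_1,g_2,h_1,h_2$, the elements associated with the disk sectors $U_1,U_2,V_1,V_2$, is strictly positive in $\pi_1(M)$.

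Suppose, for contradiction, that the branch direction at some boundary edge $e$ of $s_i$ points into $s_i$. By Lemma~\ref{Ldisk}, $\partial s_i$ is a union of subarcs of $u_1\cup u_2\cup v_1\cup v_2$, so $e$ lies on exactly one of $\partial U_1,\partial U_2,\partial V_1,\partial V_2$; let $D$ be the corresponding disk sector, so that the element associated with $D$ is one of $g_1,g_2,h_1,h_2$, hence positive. The three branch sectors of $B$ meeting along $e$ are then $D$, the disk $s_i$, and the disk $s_j\in\mathcal S$ on the far side of $e$. I now match these to the sectors $P,Q,R$ of Lemma~\ref{Lineq} (Figure~\ref{Finequal}(b)). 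In the local cusp model, the branch direction is tangent to the smooth sheet $P\cup Q$ and points from the two-sheeted region toward the one-sheeted region; hence it points into $Q$ and out of both $P$ and $R$. Since by assumption it points into $s_i$, we must have $s_i=Q$, and $\{P,R\}=\{s_j,D\}$.

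Now apply Lemma~\ref{Lineq}, writing $p,q,r$ for the elements associated with $P,Q,R$: then $\gamma_i=q=pr$, $q\ge p$, and $q=p$ if and only if $r=1$. If $D=P$, then $p$ is the positive element associated with $D$, so $\gamma_i=q\ge p>1$. If $D=R$, then $r$ is that positive element, so $r\ne 1$, whence $\gamma_i=q>p=\gamma_j\ge 1$. In either case $\gamma_i>1$, contradicting $\gamma_i=1$. Therefore the branch direction at every boundary edge of $s_i$ points out of $s_i$.

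The step deserving the most care is the identification $s_i=Q$, which relies on the local picture of how $S\cup U_1\cup U_2\cup V_1\cup V_2$ is smoothed into $B$ near $e$: for an edge on some $u_k$ the disk $D=U_k$ lies on the plus side of $S$ and becomes the extra sheet $R$, whereas for an edge on some $v_k$ the disk $D=V_k$ lies on the minus side of $S$ and instead plays the role of $P$. In both cases, however, the branch locus along $e$ separates a one-sheeted region from a two-sheeted one, the branch direction points into the one-sheeted side, and $s_i$ occupies that side by hypothesis; so $s_i=Q$ uniformly, and the computation above applies verbatim.
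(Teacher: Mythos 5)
Your proof is correct and follows essentially the same strategy as the paper's: derive $\gamma_i>1$ from the positivity of all associated elements together with the local relation of Lemma~\ref{Lineq} at a boundary edge whose branch direction points into $s_i$. The only cosmetic difference is that you apply Lemma~\ref{Lineq} uniformly (identifying $s_i=Q$ and splitting on whether the neighboring compressing disk is $R$ or $P$), whereas the paper invokes Lemma~\ref{Ldirection} for edges on $\partial U_m$ and Lemma~\ref{Lineq} only for edges on $\partial V_m$; your identification of the $U$-disk as $R$ and the $V$-disk as $P$ matches exactly the roles those lemmas assign them.
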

\begin{proof}
Note that $\gamma_0=1$ in $\pi_1(M)$ by construction. So the lemma is not vacuous. 

Let $e$ be a boundary edge of $s_i$ and let $s_j$ be the disk in $\mathcal{S}$ that shares the boundary edge $e$ with $s_i$. Since  the element associated with each branch sector is non-negative and by the hypothesis $\gamma_i=1$, we have $\gamma_j\ge \gamma_i=1$ in $\pi_1(M)$. 

The edge $e$ is an arc in either $\partial U_m$ or $\partial V_m$ ($m=1,2$). 
First suppose $e\subset\partial U_1$ (the case that $e\subset \partial U_2$ is similar). 
If the branch direction at $e$ points into $s_i$ and hence out of $s_j$, by Lemma~\ref{Ldirection}, we have $\gamma_i>\gamma_j$, contradicting the conclusion $\gamma_j\ge \gamma_i$ above. 

It remains to consider the possibility that $e\subset\partial V_1$ (the case that $e\subset \partial V_2$ is similar).  If the branch direction at $e$ points into $s_i$ and hence out of $s_j$, by Lemma~\ref{Lineq}, we have $\gamma_i=h_1 \gamma_j$. By the hypothesis $\gamma_i=1$, we have $h_1=\gamma_j^{-1}$.  However, by our construction, $\gamma_j\ge 1$ for any $j$ and hence $h_1=\gamma_j^{-1}\le 1$. This is a contradiction since we have chosen $h_1$ to be positive in our construction.
\end{proof}

\begin{lemma}\label{Lsourcequad}
Suppose $\gamma_i$ is trivial in $\pi_1(M)$. Then $s_i$ is a quadrilateral with its 4 boundary edges belong to different disks $U_1$, $U_2$, $V_1$ and $V_2$.
\end{lemma}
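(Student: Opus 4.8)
The plan is to combine Lemma~\ref{Lsource} with the combinatorics of the Heegaard diagram and the absence of waves.

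First I would record the shape of $\partial s_i$. By Lemma~\ref{Ldisk} the region $s_i$ is a disk, so $\partial s_i$ is a polygon whose corners are points of $\widehat U\cap\widehat V$ and whose edges are subarcs of $u_1\cup u_2\cup v_1\cup v_2$. Since $u_1\cap u_2=\emptyset=v_1\cap v_2$, at each corner one incident edge is a $\widehat U$-arc and the other a $\widehat V$-arc, so the edges of $\partial s_i$ alternate between $\widehat U$-arcs and $\widehat V$-arcs and their number $2k$ is even. A cornerless $\partial s_i$ would be a component of $\widehat U$ or $\widehat V$ bounding the disk $s_i$, impossible as $u_1,u_2,v_1,v_2$ are essential in $S$; and $2k=2$ would make $s_i$ a bigon of $\widehat U\cap\widehat V$, impossible by tightness. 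Hence $k\ge 2$, so $s_i$ has at least two $\widehat U$-edges and at least two $\widehat V$-edges.

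Next I would use the source property. By Lemma~\ref{Lsource} the branch direction at every edge of $s_i$ points out of $s_i$. At a $\widehat U$-edge $e\subset u_a$ the branch direction coincides with the induced normal direction of $e$ (this is how $B$ is built; cf.\ the proof of Lemma~\ref{Ldirection}), and since that normal is consistent along the circle $u_a$, the disk $s_i$ must lie on one fixed side of $u_a$ along all of its $u_a$-edges. At a $\widehat V$-edge $e\subset v_b$ the branch direction is again a consistent co-orientation of $v_b$ in $S$ — the disk $V_b$ is attached to a fixed side of $S$, so the cusp along $v_b$ cannot reverse — and the same conclusion holds. Thus all $u_1$-edges of $s_i$ lie on one side of $u_1$, and likewise for $u_2$, $v_1$, $v_2$.

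Finally I would rule out two edges on a single curve. Suppose $s_i$ has two edges $e_1,e_2$ on $u_1$ (the cases of $u_2,v_1,v_2$ being identical); by the previous step they lie on the same side of $u_1$. Take an embedded arc $\delta\subset\Int(s_i)$ from an interior point of $e_1$ to an interior point of $e_2$. Then $\delta\cap(\widehat U\cup\widehat V)=\partial\delta\subset u_1$ and $\delta$ meets $u_1$ from the same side at both ends, so $\delta$ satisfies conditions (1) and (2) of a wave with respect to $u_1$. For condition (3), a regular neighborhood of $u_1\cup\delta$ is a pair of pants with one boundary circle isotopic to $u_1$, so (as $u_1$ is non-separating) $S\setminus(u_1\cup\delta)$ is connected unless $\delta$ is inessential; in the inessential case $\delta$ together with a subarc of $u_1$ bounds a disk $D$ in $S$ with $\Int(D)$ disjoint from $u_2$, and taking an innermost subdisk of $D$ cut off by $v_1\cup v_2$ produces a subdisk of some complementary region bounded by one subarc of $u_1$ and one subarc of a $v_b$ — a bigon of $\widehat U\cap\widehat V$, contradicting tightness. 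Hence $\delta$ is a wave, contradicting that the Heegaard diagram has no wave. So $s_i$ has at most one edge on each of $u_1,u_2,v_1,v_2$, and with the first step this forces exactly one edge on each: $s_i$ is the asserted quadrilateral. I expect the main obstacle to be condition~(3) in this last step — excluding the inessential arc — which forces the innermost-disk/bigon analysis and is where tightness of $\widehat U\cap\widehat V$ enters; the rest is formal, built on Lemmas~\ref{Ldisk}, \ref{Ldirection}, and \ref{Lsource}.
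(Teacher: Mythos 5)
Your argument follows the same route as the paper: use Lemma~\ref{Lsource} to see that the branch directions all point out of $s_i$, note that $\partial s_i$ alternates $\widehat U$-edges and $\widehat V$-edges with $\geq 2$ of each, and then conclude that two edges of $\partial s_i$ on the same curve would produce a wave, contradicting minimality. Your step 2 (the branch direction at the cusp along a $u_a$ or $v_b$ is a consistent co-orientation, so the source property forces $s_i$ onto a single side of each curve) is a correct and useful elaboration of what the paper leaves implicit, and you are right that condition (3) of the wave definition, which the paper's proof does not explicitly address, deserves a check.

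However, there is a gap in how you check condition (3). You assert that, because $u_1$ is non-separating, $S\setminus(u_1\cup\delta)$ is connected unless $\delta$ is inessential (boundary-parallel in $S$ cut along $u_1$). That dichotomy is not complete. The neighborhood $N(u_1\cup\delta)$ is a pair of pants with boundary curves $c_0\sim u_1$, $c_1$, $c_2$, where $c_j=\delta\cup a_j$ for the two arcs $a_1,a_2$ of $u_1$. The non-separating hypothesis rules out a complementary piece bounded by $c_0$ alone (and the three-piece case), but there remains the possibility that the complement splits as an annulus bounded by $c_0$ and $c_1$ together with a once-punctured torus bounded by $c_2$ alone; in that case $c_2$ is separating in $S$ but does \emph{not} bound a disk, so $\delta$ is not inessential and your innermost-disk argument does not apply. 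This case can be excluded too, but it needs its own argument: for instance, bigon-freeness forces every arc of $(v_1\cup v_2)$ in the annular piece to be essential, giving $|a_1\cap(v_1\cup v_2)|=|u_1\cap(v_1\cup v_2)|$ and hence $a_2\cap(v_1\cup v_2)=\emptyset$; meanwhile $u_2$ (which cannot sit in the annulus without being parallel to $u_1$) lies in the once-punctured-torus piece, so that piece must meet $v_1\cup v_2$ through $a_2$, a contradiction. As written your proof skips this subcase, so the verification of condition (3) is incomplete.
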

\begin{proof}
By our construction, the boundary of $s_i$ has an even number of edges, alternately from $\{\partial U_1,\partial U_2\}$ and $\{\partial V_1,\partial V_2\}$.  
Since the intersection of the curves $\partial U_i$ and $\partial V_j$ is tight, $s_i$ cannot be a bigon. 
If the boundary of $s_i$ has more than 4 edges, then there must be two distinct boundary edges $e_1$ and $e_2$ that belong to the same disk $U_j$ or $V_j$, say $U_1$.  
By Lemma~\ref{Lsource}, the branch directions at both $e_1$ and $e_2$ point out of $s_i$. This implies that an arc in $s_i$ connecting $e_1$ to $e_2$ is a wave with respect to $U_1$, contradicting our no-wave assumption on the Heegaard diagram in section~\ref{Ssetup}. Thus, the boundary of $s_i$ consists of exactly 4 edges that are subarcs of different disks $U_1$, $U_2$, $V_1$ and $V_2$. 
\end{proof}
\begin{remark*}
In the construction above, we have chosen the base disk $s_0$ so that $\gamma_i\ge 1$ for each sector $s_i$. If we choose an arbitrary base disk $s_0$, then Lemma~\ref{Lsource} and Lemma~\ref{Lsourcequad} say that any sector with the least order must be a quadrilateral with branch direction at its boundary pointing outwards.
\end{remark*}
Suppose $s_0, \dots, s_k$ are all the disks in $\mathcal{S}$ whose associated elements are trivial in $\pi_1(M)$. By Lemma~\ref{Lsource} and Lemma~\ref{Lsourcequad}, $s_0, \dots, s_k$ are quadrilaterals as shown in Figure~\ref{Fsource}(a).

Note that if a branch sector $P$ has branch directions at all of its boundary edges pointing out of $P$, then as shown in Figure~\ref{Fdelet}, if we remove $\Int(P)$ from $B$, $B\setminus\Int(P)$ is a new branched surface.  By Lemma~\ref{Lsource}, we can remove the interior of the disks  $s_0, \dots, s_k$ from $B$ and obtain a new branched surface, see Figure~\ref{Fsource}(b).  Denote the new branched surface by $B_0$.

\begin{figure}[h]
	\begin{overpic}[width=4in]{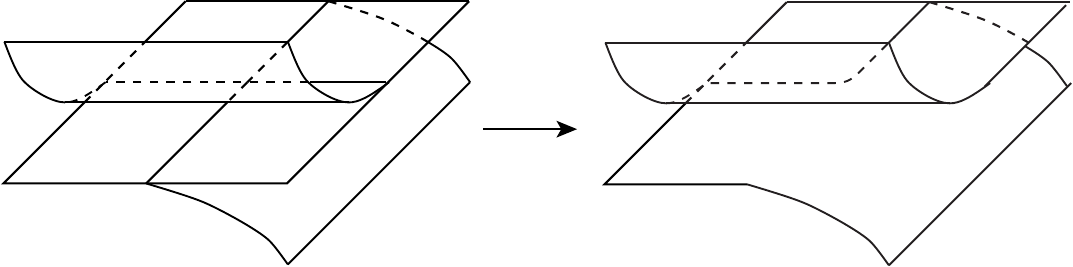}
	\put(22,9){$P$}
	\end{overpic}
	\caption{Remove $\Int(P)$ from $B$}\label{Fdelet}
\end{figure}

\begin{figure}[h]
	\vskip 0.3cm
	\begin{overpic}[width=4.5in]{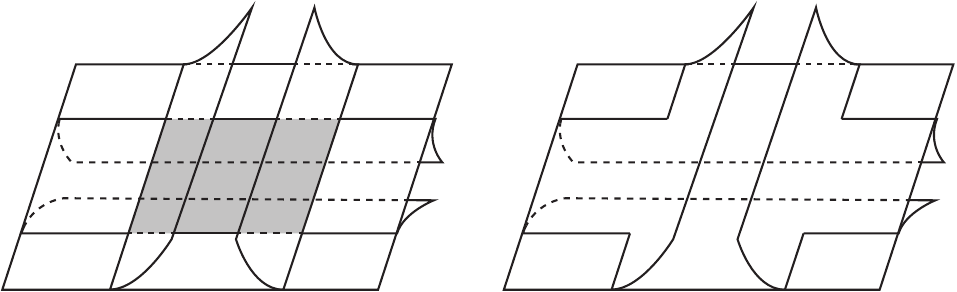}
	\put(20,-5){(a)}
	\put(72,-5){(b)}
	\put(21,28.5){$U_1$}
	\put(33.5,28){$U_2$}
	\put(46,7){$V_1$}
	\put(47,12.5){$V_2$}
	\end{overpic}
	\vskip 0.3cm
	\caption{Remove the disk $s_0$ from $B$}\label{Fsource}
\end{figure}

Different branched sectors of $B$ may merge into the same branch sector of $B_0$. Suppose two branch sectors $P$ and $Q$ of $B$ merge into the same branch sector of $B_0$. Let $p$ and $q$ be the group elements associated with $P$ and $Q$ respectively. 
Since $\gamma_i=1$ for each $i=0,\dots, k$ and by Lemma~\ref{Lineq}, we must have $p=q$ in $\pi_1(M)$.
Thus, we associate the same group element $p=q$ with the newly formed branch sector $P\cup Q$ in $B_0$. We can pick the loop associated with either $P$ or $Q$ for the new branched sector $P\cup Q$.  
So each branch sector of $B_0$ is associated with a loop. 
Since $\gamma_0,\dots,\gamma_k$ are trivial, different choices of the associated loop for any branch sector of $B_0$ represent the same element in $\pi_1(M)$. 
Moreover, this implies that Lemma~\ref{Lineq} also holds for $B_0$

Since the special disk $s_0$ is removed, each associated loop intersects $B_0$ at exactly one point.  Moreover, since we have removed all the disks $s_0, \dots, s_k$ whose associated elements are trivial, $B_0$ has the following properties:
\begin{enumerate}
    \item each branched sector of $B_0$ is associated with a loop that transversely intersects $B_0$ in a single point at this branch sector,
    \item each loop represents a positive element in $\pi_1(M)$
\end{enumerate}
Furthermore,  $B_0$ has an induced transverse orientation and the transverse orientation agrees with the directions of these associated loops.

\section{Construct laminations from a left order}\label{SLF}

In this section, we construct a lamination fully carried by $B_0$. We also discuss a special situation when the lamination can be trivially extended to a co-orientable taut foliation.

\begin{lemma}\label{Llam}
$B_0$ fully carries a lamination.
\end{lemma}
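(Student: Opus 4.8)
The plan is to construct the lamination as the inverse limit of an infinite sequence of splittings of $B_0$, where the splitting instructions are dictated by the fixed left order on $\pi_1(M)$. The key idea is that the associated group elements give each branch sector a "height", and we should always split so that sheets at lower height are separated from sheets at higher height; the left-invariance of the order guarantees this instruction is consistent at every branch locus. Concretely, I would first set up a bit of bookkeeping: a surface carried by $N(B_0)$ is determined by a weight vector (one non-negative integer per branch sector, satisfying the branch equations), and I want to produce a nested sequence $N(B_0)\supset N(B_1)\supset N(B_2)\supset\cdots$ of fibered neighborhoods of branched surfaces, each obtained from the previous by the local splittings of Figure~\ref{Fsplit1}, such that $\Lambda=\bigcap_i N(B_i)$ is a non-empty lamination meeting every $I$-fiber of $N(B_0)$.

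The main steps, in order: (1) Recall from Lemma~\ref{Lineq} that at each arc of the branch locus of $B_0$, with branch sectors $P,Q$ merging into $R$ and branch direction from $P$ to $Q$, the associated elements satisfy $q=pr$ with $p,q$ positive and $r$ positive (since $B_0$ has no trivial sectors), so in fact $q>p$ strictly. Thus along the branch locus the group elements strictly increase in the branch direction — this is the geometric input that makes the construction run. (2) Describe the splitting move: given finitely many sheets of a carried surface running through a branch sector, the left order restricted to the (finite) multiset of associated elements appearing locally tells us in which order the sheets must stack; split $B_i$ to $B_{i+1}$ so as to realize one more "separation" consistent with this order. One must check that the instruction coming from the order at one branch sector is compatible with the instruction at an adjacent branch sector across a branch arc — this follows because $q=pr$ and left-invariance means multiplying all local elements on one side by a fixed group element preserves their relative order. (3) Show the process never terminates in an empty object: each $N(B_i)$ still fully carries at least one surface (e.g. the one corresponding to a fixed positive weight vector survives), so $\partial_h N(B_i)\neq\emptyset$, and the nested intersection is a non-empty closed set transverse to the $I$-fibers, hence — after checking there are no disks of contact or monogons, using Lemma~\ref{Lnoclosed} and the no-wave/tightness properties — a lamination fully carried by $B_0$. (4) Conclude.

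The step I expect to be the main obstacle is (2)–(3): making precise the claim that the left order furnishes a \emph{consistent} and \emph{non-terminating} splitting instruction, and that the inverse limit is genuinely a lamination rather than, say, a single leaf or an empty set or something with branching. One has to be careful that the splittings we are forced to perform do not create disks of contact (which would have to be split away, potentially emptying the carried set); here irreducibility of $M$, strong irreducibility of the Heegaard splitting, Lemma~\ref{Lnoclosed} (only closed carried surface is $S$), and Lemma~\ref{Lbandsum}-type tightness statements should rule out the bad cases. I would also need to verify that the strict inequality $q>p$ at every branch arc means the splitting "keeps moving" — i.e.\ there is always a well-defined next separation to perform — so that the limit lamination actually meets every $I$-fiber of $N(B_0)$ and is therefore \emph{fully} carried, not merely carried.
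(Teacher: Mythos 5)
Your overall plan --- split $B_0$ following instructions read off from the left order and take the inverse limit --- is the same as the paper's. You also correctly flag the two verifications needed: that the splitting instruction is consistent and never terminates, and that the limit is a lamination fully carried by $B_0$. However, the specific mechanisms you propose for these verifications are not the right ones, and the gaps are substantive.

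The idea you are missing is the \emph{twisted disk of contact}. The paper's splitting instruction is designed so that every branch sector created by a splitting is associated with a \emph{positive} element of $\pi_1(M)$: at a local splitting as in Figure~\ref{Fsplit2} one computes $\delta = a^{-1}c = bd^{-1}$ and chooses the splitting direction according to the sign of $\delta$. With positivity maintained throughout, the only local obstruction to continuing the process --- a twisted disk of contact --- is ruled out by a purely group-theoretic observation: the loop around such a disk is a product of the associated elements of the sectors attached to it, and a product of strictly positive elements in a left-ordered group cannot be trivial. Your framing ``group elements strictly increase along the branch locus'' is a true consequence of Lemma~\ref{Lineq} but it is not the input that eliminates the obstruction, and the irreducibility, strong irreducibility, ``no-wave/tightness,'' and Lemma~\ref{Lbandsum} ingredients you list are simply not used in this lemma (they appear much later, in the analysis of disks of contact in Section~6). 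Your non-termination step is also not quite the right argument: it is unclear what ``a fixed positive weight vector survives'' means across splittings, and the danger is not that $N(B_i)$ becomes empty but that the splitting process stops because $\partial_vN(B_i)$ has been eliminated, i.e.\ $B_i$ has been split down to a closed surface. The correct use of Lemma~\ref{Lnoclosed} --- which you cite but attach to the wrong purpose --- is exactly here: if splitting terminated then $B_0$ would carry a closed surface, that surface would have to be $S$, and $B_0$ cannot carry $S$ because $s_0$ was deleted.
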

\begin{proof}
The idea of the proof is to perform splittings on $B_0$. The left order on elements in $\pi_1(M)$ provides a splitting instruction. 
The lamination carried by $B_0$ is the inverse limit of an infinite splitting process.

As mentioned in section~\ref{Ssetup}, 
any splitting of a branched surface is a sequence of local splittings along disk neighborhoods of short arcs, as depicted in the 3 possible pictures of Figure~\ref{Fsplit1}, plus the operation of eliminating a disk of contact as in Figure~\ref{Fdisk}.  
Consider the local splitting as illustrated in Figure~\ref{Fsplit2}, where the dashed arrows in Figure~\ref{Fsplit2}(a) denote the transverse orientation and $a$, $b$, $c$, $d$, $e$ denote the elements in $\pi_1(M)$ associated with the corresponding branch sectors. By Lemma~\ref{Lineq}, we have  $ab=e=cd$ in Figure~\ref{Fsplit2}(a). This implies that $a^{-1}c=bd^{-1}$. 
If we perform a splitting as in Figure~\ref{Fsplit2}, then a new branch sector is created by the splitting. 
As shown in Figure~\ref{Fsplit2}(b),  we associate a loop $\delta$  with this new branch sector, such that (1) the loop $\delta$ transversely intersects the branched surface in a single point at this new branch sector, and (2) the orientation of $\delta$ agrees with the transverse orientation of the branched surface. 
It follows from our construction that different choices of such a loop $\delta$ represent the same element in $\pi_1(M)$. 
Moreover, Lemma~\ref{Lineq} also holds for the branched surface after this splitting. 
This means that $c=a\delta$ and $b=\delta d$, see Figure~\ref{Fsplit2}(b). In other words, $\delta=a^{-1}c=bd^{-1}$.  Notice that $\delta>1$ if and only if $c>a$ or equivalently $d^{-1}>b^{-1}$.  

\begin{figure}[h]
	\vskip 0.3cm
	\begin{overpic}[width=4in]{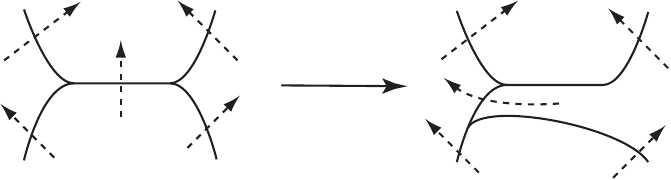}
		\put(16,-6){(a)}
		\put(80,-6){(b)}
		\put(17,6){$e$}
		\put(25.5,3.5){$a$}
		\put(32,16){$b$}
		\put(8.5,2){$c$}
		\put(3,15.5){$d$}
		\put(89.5,1){$a$}
		\put(96,15){$b$}
		\put(71,2){$c$}
		\put(63.5,17){$d$}
		\put(84.5,9.5){$\delta$}
		\put(42,16){splitting}
	\end{overpic}
	\vskip 0.3cm
	\caption{A local splitting}\label{Fsplit2}
\end{figure}

Our splitting instruction is: if $c>a$ or equivalently $d^{-1}>b^{-1}$, we perform the local splitting as in Figure~\ref{Fsplit2} which is the same as the first splitting in Figure~\ref{Fsplit1}. If $c=a$ or equivalently $d^{-1}=b^{-1}$, we perform the splitting as in the second picture in Figure~\ref{Fsplit1}.  If $c<a$ or equivalently $d^{-1}<b^{-1}$, we perform the splitting as in the third picture in Figure~\ref{Fsplit1}. 
If $c>a$ or $c<a$, this splitting creates a new branch sector, but
the splitting instruction guarantees that, for any newly created branch sector, its associated element $\delta$ is positive, and Lemma~\ref{Lineq} always holds. 
If $c=a$, then the splitting drills a tunnel, and the complement of the new branched surface is no longer simply connected. Nonetheless, it follows from $c=a$ that any loop that runs through the tunnel must be homotopically trivial in $M$. 
Thus, for each branch sector, any choice of the associated loop always represents the same element in $\pi_1(M)$.

Note that Figure~\ref{Fsplit1} is a one-dimensional schematic picture, the full picture may include another branch sector as in Figure~\ref{Fsplit3}, which happens if the splitting is along an arc at the branch locus. 
But there is no real difference in the splitting instruction. 

\begin{figure}[h]
	\vskip 0.3cm
	\begin{overpic}[width=3.5in]{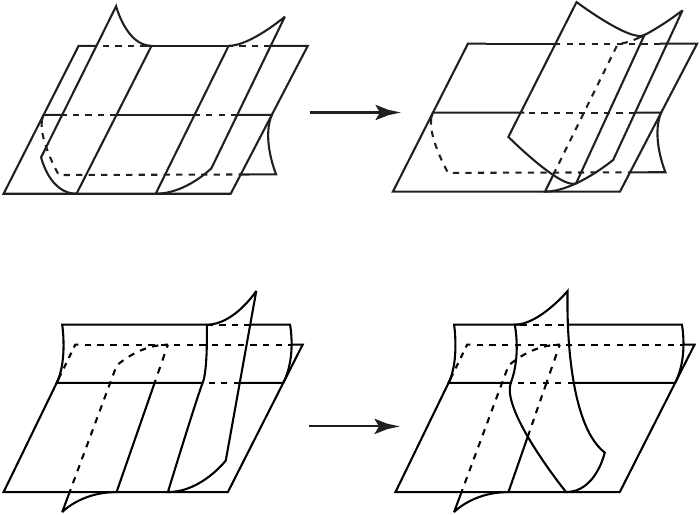}	
	\put(43,59.5){splitting}
	\put(43,14.5){splitting}
	\end{overpic}
	\vskip 0.3cm
	\caption{Splitting along branch locus}\label{Fsplit3}
\end{figure}

We perform such local splittings along (disk neighborhoods of) arcs in all directions. If we see any simple disk of contact in a small 3-ball that consists of two small disks pinched together at a common subdisk as in Figure~\ref{Fdisk}(a), we can perform a splitting along this simple disk of contact as in Figure~\ref{Fdisk}.  

The only local obstruction to continuing such splitting is a twisted disk of contact. 
For example, if we use incompatible splittings at the top and bottom arcs of Figure~\ref{Ftwist1}(a), then we create Figure~\ref{Ftwist1}(b) which is a twisted disk of contact. Twisted disks of contact were introduced in \cite{GO} as a local obstruction for a branched surface to carry a lamination since they force non-trivial holonomy along a trivial curve. 

\begin{figure}[h]
	\vskip 0.3cm
	\begin{overpic}[width=3in]{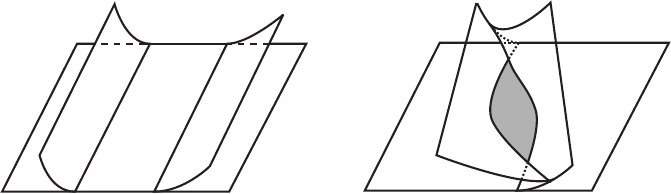}
		\put(16,-6){(a)}
		\put(75,-6){(b)}
	\end{overpic}
	\vskip 0.3cm
	\caption{Create a twisted disk of contact}\label{Ftwist1}
\end{figure}
 
Note that the twisted disk of contact in Figure~\ref{Ftwist1}(b) is equivalent to the twisted disk of contact in Figure~\ref{Ftwist2}(a). In general, any twisted disk of contact can be obtained by locally adding some branch sectors to a simple disk of contact such that the branch directions of these branch sectors are all the same along the boundary of the disk of contact, see Figure~\ref{Ftwist2}(a).  
The simplest twisted disk of contact is Figure~\ref{Ftwist2}(b).

We claim that a twisted disk of contact never appears. 
To see this, first consider Figure~\ref{Ftwist2}(b). Let $c$ be the associated element of the branch sector attached to the disk of contact. The loop $c$ is homotopic in $M$ to the boundary of the disk of contact and hence $c=1$ in $\pi_1(M)$. However, in our construction, the associated elements of all the branch sectors are positive. This is a contradiction.   
Similarly, for Figure~\ref{Ftwist2}(a), denote the associated elements of the two branch sectors attached to the disk of contact by $a$ and $b$, as shown in Figure~\ref{Ftwist2}(a). So the loop around the twisted disk of contact is $a\cdot b$ and hence $ab=1$ in $\pi_1(M)$. Again, this is impossible since both $a>1$ and $b>1$ in our construction. The argument for a twisted disk of contact with an arbitrary number of branch sectors attached to a disk of contact is the same.
Therefore, during our splitting process, a twisted disk of contact never appears, and we can continue the splitting process.

\begin{figure}[h]
	\vskip 0.3cm
	\begin{overpic}[width=3in]{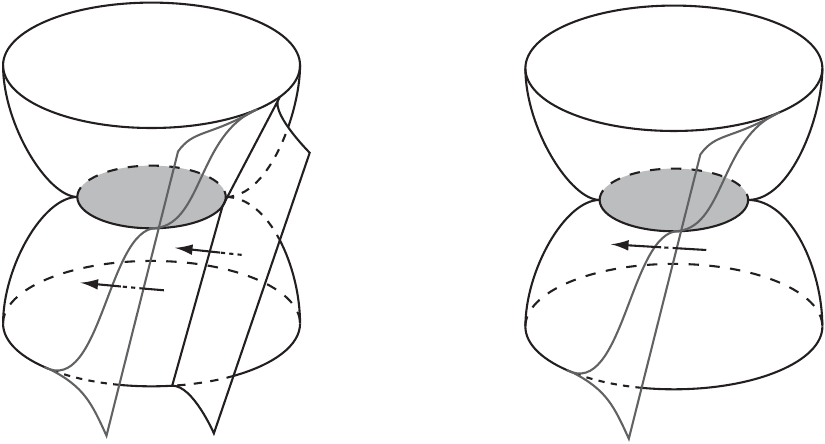}
		\put(16,-6){(a)}
		\put(82,-6){(b)}
		\put(28,23.5){$a$}
		\put(20,17){$b$}
		\put(71.5,23){$c$}
	\end{overpic}
	\vskip 0.3cm
	\caption{Twisted disks of contact}\label{Ftwist2}
\end{figure}

The splitting stops in a finite number of steps only if the splitting is along a compact splitting surface $F\subset N(B_0)$ with $\partial F\subset\partial_vN(B_0)$ and each component of $\partial_vN(B_0)$ contains exactly one boundary curve of $F$. In other words, if we split (or cut) $N(B_0)$ along $F$, there is no more vertical boundary, and the resulting space is an $I$-bundle over a closed surface. 
This means that $B_0$ carries a closed surface.  Any surface carried by $B_0$ is also carried by the original branched surface $B$. 
However, by Lemma~\ref{Lnoclosed}, the Heegaard surface $S$ is the only closed surface carried by $B$. 
Since we have removed at least one quadrilateral disk $s_0$, $B_0$ does not carry the Heegaard surface $S$. This is a contradiction.

Therefore, this splitting process can be continued indefinitely and in all possible directions. Note that one can view that the splitting is along an infinite (not necessarily connected) splitting surface $F$, where $\partial F\subset\partial_vN(B_0)$ and each component of $\partial_vN(B_0)$ contains exactly one boundary curve of $F$. The complement of $F$ in $N(B_0)$ is an interval bundle over a non-compact surface.

As mentioned in section~\ref{Ssetup}, if a branched surface $B_2$ is obtained by splitting $B_1$, we may view $N(B_2)\subset N(B_1)$.   
So the inverse limit of the infinite splitting process is a lamination fully carried by $B_0$ (see \cite{MO}, similar constructions are also used in \cite{L1, L8}).

\end{proof}

The branched surface $B_0$ in Lemma~\ref{Llam} is obtained by removing all the branch sectors $s_0,\dots, s_k$ in $S$ whose associated group elements are trivial. The simplest case is that there is only one such disk $s_0$.

\begin{lemma}\label{Lfoliation}
If there is a unique disk whose associated element is trivial, in other words, if $k=0$ and $B_0=B\setminus\Int(s_0)$, then $M$ admits a co-orientable taut foliation.
\end{lemma}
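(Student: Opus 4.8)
The plan is to extend the lamination $\lambda$ produced in Lemma~\ref{Llam} to a foliation by filling in the complementary region $M\setminus\Int N(B_0)$, which under the hypothesis $k=0$ is a single $3$--ball. First I would identify this region. The $2$--complex $S\cup U_1\cup U_2\cup V_1\cup V_2$ cuts $M$ into the two balls $\mathcal U\setminus(U_1\cup U_2)$ and $\mathcal V\setminus(V_1\cup V_2)$, so $M\setminus\Int N(B)$ consists of exactly two balls $\mathcal B_X\ni X$ and $\mathcal B_Y\ni Y$. By Lemma~\ref{Lsource} every branch direction along $\partial s_0$ points out of $s_0$, so (as in Figure~\ref{Fdelet}) $B_0=B\setminus\Int(s_0)$ is again a branched surface and, up to isotopy, $N(B_0)=N(B)\setminus\Int\bigl(\pi^{-1}(s_0)\bigr)$, where $\pi^{-1}(s_0)\cong s_0\times I$ is the product region over $s_0$. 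One horizontal face of $s_0\times I$ lies on $\partial\mathcal B_X$ and the opposite face on $\partial\mathcal B_Y$, so $M\setminus\Int N(B_0)=\overline{\mathcal B_X}\cup(s_0\times I)\cup\overline{\mathcal B_Y}$ is a ball $\mathcal B$ whose boundary sphere splits into a horizontal part $\partial_h\mathcal B\subset\partial_hN(B_0)$ (where leaves must be tangent) and a vertical part $\partial_v\mathcal B\subset\partial_vN(B_0)$, a union of annuli (where the $I$--fibres of $N(B_0)$ stay transverse). Since $s_0$ is a quadrilateral (Lemma~\ref{Lsourcequad}), I expect $\mathcal B$ to be, as a sutured manifold, the product $D^2\times I$ with $\partial_v\mathcal B=\partial D^2\times I$: the quadrilateral $s_0$ exactly zips the horizontal boundaries of $\mathcal B_X$ and $\mathcal B_Y$ together so that the vertical annuli of $\partial\mathcal B_X$ and $\partial\mathcal B_Y$ amalgamate with $\partial s_0\times I$ into a single annulus.

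Next I would fill in. From the proof of Lemma~\ref{Llam} the splitting construction produces no twisted disk of contact and eliminates every simple disk of contact, so $\lambda$ has no disk of contact in $N(B_0)$; together with Lemma~\ref{Lnoclosed} and the irreducibility of $M$, this makes the complementary regions of $\lambda$ tame. Inside $N(B_0)$ these are $I$--bundles over the non-compact ``gap'' surfaces of $\lambda$, and I foliate them by fibre-surfaces parallel to the bounding leaves, which is compatible with $\lambda$ and keeps the $I$--fibres transverse. The ball $\mathcal B$, being $D^2\times I$ with $\partial_v\mathcal B=\partial D^2\times I$, I foliate by the disks $D^2\times\{t\}$; should $\mathcal B$ fail to be a genuine product one first performs additional splittings of $B_0$ to arrange this, or fills $\mathcal B$ by spiraling its extra horizontal pieces inward. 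The union of all these pieces is a foliation $\mathcal F$ of $M$ containing $\lambda$ as a sublamination.

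Finally I would check that $\mathcal F$ is co-orientable and taut. Co-orientability is inherited from the transverse orientation of $B_0$, which extends over the $I$--bundle gaps and over $\mathcal B$. For tautness I would produce a closed transversal through every leaf: from any point, push into a leaf of $\lambda$ inside $N(B_0)$, then run along $I$--fibres of $N(B_0)$, which are transverse to $\mathcal F$ and coherently oriented by the transverse orientation, and whenever the path exits $N(B_0)$ into $\mathcal B$ cross $\mathcal B$ monotonically in the $I$--direction and re-enter $N(B_0)$. Since $M$ is compact and the co-orientation never reverses, such a transverse ray closes up into a closed transversal; as $\lambda$ is fully carried and $\mathcal B$ is crossed by transversals, varying the starting point yields closed transversals meeting every leaf of $\mathcal F$. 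Hence $\mathcal F$ is a co-orientable taut foliation of $M$, which is what the lemma asserts.

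The step I expect to be the main obstacle is verifying that the complementary ball $\mathcal B$ actually carries a foliation tangent to $\partial_h\mathcal B$ and transverse to $\partial_v\mathcal B$: one has to pin down precisely how the branched-surface construction decomposes $\partial\mathcal B$ into horizontal and vertical parts and then check that this sutured structure is benign (a product, or at worst fillable with no Reeb-type obstruction). This is exactly the place where the hypothesis $k=0$ and the no-wave/quadrilateral structure of $s_0$ (Lemmas~\ref{Lsource} and~\ref{Lsourcequad}) do the work, and it is what makes this ``trivial extension'' case genuinely easier than the general situation.
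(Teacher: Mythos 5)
Your proposal follows the same strategy as the paper: identify $M\setminus\Int N(B_0)$ as a single ball, show it is a product $D^2\times I$ with vertical boundary $\partial D^2\times I$, fill the infinitesimal $I$-bundle gaps and the product ball trivially, and conclude tautness from the transverse orientation together with closed transversals. You also correctly identify Lemmas~\ref{Lsource} and~\ref{Lsourcequad} as the lemmas carrying the weight.

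The one place where you stop short of the paper's argument is exactly the step you yourself flag as the obstacle. You ``expect'' $\mathcal{B}$ to be a product and offer fallbacks (``perform additional splittings of $B_0$'' or ``spiral extra horizontal pieces inward'') in case it is not. Neither fallback is safe: splitting $B_0$ further changes $N(B_0)$ and would require re-verifying the lamination construction, and filling by spiraling risks creating Reeb-type behaviour that destroys tautness. The paper never needs these because it pins the structure down exactly: by Lemma~\ref{Lsourcequad} the four edges of $s_0$ lie on the four distinct curves $\partial U_1,\partial U_2,\partial V_1,\partial V_2$, so deleting $\Int(s_0)$ joins the four cusp circles at the four corners of $s_0$ into a single immersed curve; hence $\partial_vN(B_0)$ is a single annulus, which divides the boundary $2$-sphere of the ball into two disks, forcing the product structure $D^2\times I$ with $\partial D^2\times I=\partial_vN(B_0)$. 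Your sentence about the quadrilateral ``zipping'' the two balls' boundaries together is the right intuition, but it should be upgraded to this precise cusp-circle count, after which the hedge disappears. The appeal to ``$\lambda$ has no disk of contact'' is also unnecessary here: once the complementary ball is $D^2\times I$ and the infinitesimal regions are $I$-bundles, the complement of $\lambda$ is automatically a disjoint union of $I$-bundles, and the trivial extension goes through.
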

\begin{proof}
If we remove $\Int(s_0)$ from $B$, the two 3-balls $\mathcal{U}\setminus(U_1\cup U_2)$ and $\mathcal{V}\setminus(V_1\cup V_2)$ merge into a single 3-ball.  Hence $M\setminus\Int(N(B_0))$ is a 3-ball. 
By Lemma~\ref{Lsourcequad}, the 4 boundary edges of $s_0$ belong to different disks $U_1$, $U_2$, $V_1$ and $V_2$. Thus, if we remove $\Int(s_0)$ from $B$, the 4 curves $\partial U_1$, $\partial U_2$, $\partial V_1$ and $\partial V_2$ merge into a single (immersed) curve in $S$, see Figure~\ref{Fsource}.  This means that the branch locus of $B_0$ is a single curve and $\partial_vN(B_0)$ is a single annulus.

The annulus $\partial_vN(B_0)$ divides the boundary sphere of the 3-ball $M\setminus\Int(N(B_0))$ into a pair of disks. Thus, the 3-ball $M\setminus\Int(N(B_0))$ admits a product structure $D^2\times I$ with $(\partial D^2)\times I=\partial_vN(B_0)$. 
By Lemma~\ref{Llam}, $B_0$ fully carries a lamination $\lambda$. 
The conclusion that $M\setminus\Int(N(B_0))$ is a product implies that the complement of $\lambda$ is an $I$-bundle. Hence $\lambda$ can be trivially extended to a foliation $\mathcal{F}$.  
Moreover, since $B_0$ is transversely oriented, the foliation is co-orientable (or transversely orientable).  Furthermore, the loops associated with the branched sectors can be isotoped to be transverse to $\mathcal{F}$ and these loops meet every leaf of $\mathcal{F}$.  Hence $\mathcal{F}$ is a taut foliation.
\end{proof}

By Lemma~\ref{Lfoliation}, it remains to consider the case that there are at least two disks whose associated elements are trivial, i.e.~the case $k\ge 1$. 

Suppose $k\ge 1$. Let $B_0'=B\setminus\Int(s_0)$. 

\begin{corollary}\label{Conemin}
If $B_0'$ fully carries a lamination, then the lamination can be extended to a co-orientable taut foliation of $M$.
\end{corollary}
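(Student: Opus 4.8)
The plan is to reduce the case $k \ge 1$ to the already-handled situation of Lemma~\ref{Lfoliation} by working with $B_0' = B \setminus \Int(s_0)$ instead of $B_0 = B \setminus \Int(s_0 \cup \cdots \cup s_k)$. The point is that $B_0'$ is obtained from $B$ by removing a single quadrilateral disk $s_0$, and—exactly as in the proof of Lemma~\ref{Lfoliation}—the complement $M \setminus \Int(N(B_0'))$ is then a single 3-ball with $\partial_v N(B_0')$ a single annulus, so $M \setminus \Int(N(B_0'))$ carries a product structure $D^2 \times I$ with $(\partial D^2) \times I = \partial_v N(B_0')$. Indeed, removing $\Int(s_0)$ merges the two 3-balls $\mathcal{U}\setminus(U_1\cup U_2)$ and $\mathcal{V}\setminus(V_1\cup V_2)$ into one 3-ball, and by Lemma~\ref{Lsourcequad} the four meridians $\partial U_1, \partial U_2, \partial V_1, \partial V_2$ merge into a single immersed curve in $S$, so the branch locus of $B_0'$ is a single curve and $\partial_v N(B_0')$ is a single annulus.

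Given these two facts, the argument of Lemma~\ref{Lfoliation} applies verbatim: if $\lambda$ is a lamination fully carried by $B_0'$, then the hypothesis that $M \setminus \Int(N(B_0'))$ is a product $D^2 \times I$ with vertical boundary $(\partial D^2)\times I$ forces the complement $M \setminus \lambda$ to be an $I$-bundle, so $\lambda$ extends trivially to a foliation $\mathcal{F}$ of $M$. Since $B_0'$ inherits a transverse orientation from $B$, the foliation $\mathcal{F}$ is co-orientable; and the loops associated with the branch sectors can be isotoped to be transverse to $\mathcal{F}$ and meet every leaf, so $\mathcal{F}$ is taut. This is a direct application of the reasoning already recorded, so there is essentially nothing new to verify here once one observes that only the uniqueness of the removed disk $s_0$ (not the triviality of $\gamma_1,\dots,\gamma_k$) was used to deduce the product structure in Lemma~\ref{Lfoliation}.

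The main obstacle—and the reason this is only a corollary, not a theorem—is the hypothesis itself: we must assume that $B_0'$ fully carries a lamination, whereas Lemma~\ref{Llam} only guarantees that the smaller branched surface $B_0$ does. The splitting argument of Lemma~\ref{Llam} rules out twisted disks of contact precisely because every branch sector of $B_0$ has a positive associated element; but $B_0'$ still contains the disks $s_1,\dots,s_k$ with trivial associated elements, so the twisted-disk-of-contact obstruction is no longer automatically excluded, and the splitting process may genuinely get stuck. Thus the real work of the paper—carried out in the sections following this excerpt—is to show that one can nonetheless choose the compressing disks and the combinatorics so that $B_0'$ (or a suitable modification of it) does carry a lamination; Corollary~\ref{Conemin} merely packages the observation that, once that is achieved, extending to a taut foliation is free.
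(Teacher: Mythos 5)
Your proposal is correct and takes essentially the same approach as the paper, which simply cites the proof of Lemma~\ref{Lfoliation} directly; you have usefully spelled out the key observation that only the uniqueness of the removed disk $s_0$ (not the triviality of all $\gamma_i$) is what drives the product-structure argument, so it transfers to $B_0'$ once the lamination is given as a hypothesis.
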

\begin{proof}
This directly follows from the proof of Lemma~\ref{Lfoliation}.
\end{proof}

By Lemma~\ref{Llam}, $B_0$ fully carries a lamination $\lambda$. 
We may view $B_0'$ as the branched surface obtained by adding back the branch sectors $s_1,\dots, s_k$ to $B_0$. Our next goal is to show that we can extend $\lambda$ to a lamination fully carried by $B_0'$ and by Corollary~\ref{Conemin}, $M$ admits a co-orientable taut foliation.

\section{Extend laminations to foliations}

In this section, we finish the proof of Theorem~\ref{Tmain}, using a proposition that will be proved in the next section.

Consider the set of quadrilateral disks $\{s_0,\dots, s_k\}$ whose associated group elements are trivial in $\pi_1(M)$.   
Let $\mathfrak{s}$ be a subset of $\{s_0,\dots, s_k\}$ and suppose $s_0\in\mathfrak{s}$. Let $\mathfrak{B}$ be the branched surface obtained by removing the interior of the disks in $\mathfrak{s}$ from $B$.  

The proof of Theorem~\ref{Tmain} is basically an induction on  $\mathfrak{s}$. 
By Lemma~\ref{Llam}, if $\mathfrak{s}=\{s_0,\dots, s_k\}$ (i.e.~$\mathfrak{B}=B_0$), then $\mathfrak{B}$ fully carries a lamination.  Next, we suppose $\mathfrak{B}$ fully carries a lamination $\lambda$.

The idea of the proof is to show that either $\lambda$ can be extended to a taut foliation, or there is a proper subset $\mathfrak{s}'\subsetneq\mathfrak{s}$ such that the new branched surface $\mathfrak{B}'$, constructed by removing the interior of the disks in $\mathfrak{s}'$ from $B$, fully carries a lamination. Note that $\mathfrak{B}'$ can be obtained by adding the disks in $\mathfrak{s}\setminus\mathfrak{s}'$ to $\mathfrak{B}$. 
This induction process stops when $\mathfrak{s}'$ contains only one disk $s_0$, i.e.~$\mathfrak{B}'=B_0'$, and Theorem~\ref{Tmain} follows from  Corollary~\ref{Conemin}.

By Lemma~\ref{Lfoliation}, we may suppose $|\mathfrak{s}|>1$, that is, $\mathfrak{s}$ contains more than one disk.

Before we proceed, we would like to describe some standard operations and facts about laminations. The first operation is to replace a leaf in a lamination with an $I$-bundle over this leaf. One can then delete the interior of this $I$-bundle.  
Another useful fact is that if the branch locus of a branched surface has no double point, i.e.~the branch locus is a collection of disjoint simple curves, then the branched surface always fully carries a lamination. The lamination can be constructed as follows: for each branch sector $D$, take a product of $D$ with a Cantor set, then one can glue these product laminations together along the branch locus since the union of two Cantor sets is also a Cantor set. See \cite{L3} for a general discussion. 

Since $\mathfrak{B}$ fully carries the lamination $\lambda$, after some isotopy, we may assume $\partial_hN(\mathfrak{B})\subset\lambda$. Let $M_\lambda$ be the closure of $M\setminus\lambda$ under the path metric. 
As $\partial_hN(\mathfrak{B})\subset\lambda$, the annuli of $\partial_vN(\mathfrak{B})$ are properly embedded in $M_\lambda$ dividing $M_\lambda$ into two parts:  $M\setminus\Int(N(\mathfrak{B}))$ and the infinitesimal region $M_\lambda'$, where   
$M_\lambda'$ lies in $N(\mathfrak{B})$ and hence is a collection of $I$-bundles.  
Since $\mathfrak{B}$ is transversely oriented, each component of $M_\lambda'$ must be a product $F\times I$, where $F$ is a possibly non-compact surface. 
The annuli of $\partial_vN(\mathfrak{B})$ are the vertical boundary of $M_\lambda'$ in the form of $\partial F\times I$. 

If two annuli $A_1$ and $A_2$ of $\partial_vN(\mathfrak{B})$ lie in the same component $N=F\times I$ of $M_\lambda'$, then as illustrated in Figure~\ref{Fadd}, we can first take a surface in $N$ in the form of $F\times\{1/2\}$, and then push its boundary circles onto $F\times\{0\}$ and $F\times\{1\}$, forming a branched surface that separates $A_1$ from $A_2$ in $N$. The leaves of $\lambda$ that contain $F\times\{0\}$ and $F\times\{1\}$, together with this piece of surface, form a possibly non-compact branched surface, see Figure~\ref{Fadd}.  
As described above, this non-compact branched surface fully carries a lamination since its branch locus has no double point. Moreover, using the operations described above, we can add the leaves of this lamination to $\lambda$ and obtain a new lamination. After finitely many such operations, we may assume that our lamination $\lambda$ has the property that no two annuli of $\partial_vN(\mathfrak{B})$ lie in the same component of the infinitesimal region $M_\lambda'$.

\begin{figure}[h]
	\vskip 0.3cm
	\begin{overpic}[width=4in]{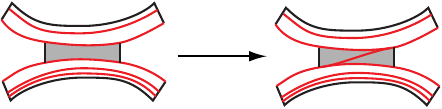}
	\put(11,11){$N$}	
	\end{overpic}
	\caption{Separate two annuli of $\partial_vN(\mathfrak{B})$}\label{Fadd}
\end{figure}

\begin{lemma}\label{Lnodisk}
If the branched surface $\mathfrak{B}$ has no disk of contact with respect to $\lambda$, then $M$ admits a co-orientable taut foliation.
\end{lemma}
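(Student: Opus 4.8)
The plan is to show that if $\mathfrak{B}$ carries $\lambda$ with no disk of contact with respect to $\lambda$, then $\lambda$ can be extended first to a lamination with $2$-sided complementary regions that are all $I$-bundles or handlebody-like pieces, and ultimately to a foliation, after which tautness follows as in Lemma~\ref{Lfoliation}. The key structural fact we have already arranged is that no two annuli of $\partial_vN(\mathfrak{B})$ lie in the same component of the infinitesimal region $M_\lambda'$, so $M_\lambda$ decomposes along the annuli of $\partial_vN(\mathfrak{B})$ into the product pieces $M_\lambda'$ and the single region $M\setminus\Int(N(\mathfrak{B}))$. The absence of disks of contact with respect to $\lambda$ is exactly the condition from Gabai--Oertel \cite{GO} ensuring that $\lambda$ is an \emph{essential} lamination carried by $\mathfrak{B}$ (no twisted disks of contact appeared during the splitting in Lemma~\ref{Llam}, and no disk of contact is now present), so there are no sphere leaves, no monogons, and the complementary regions are incompressible and end-incompressible.

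The main step is to understand $M\setminus\Int(N(\mathfrak{B}))$. Recall $\mathfrak{B}$ was obtained from $B$ by deleting the interiors of the quadrilateral disks in $\mathfrak{s}$, and $\mathfrak{s}\ni s_0$. Since deleting $\Int(s_0)$ merges the two complementary balls $\mathcal{U}\setminus(U_1\cup U_2)$ and $\mathcal{V}\setminus(V_1\cup V_2)$ into one ball, and deleting each further $s_i$ ($i=1,\dots$) opens up an additional quadrilateral "window" in the Heegaard surface, $M\setminus\Int(N(\mathfrak{B}))$ is a $3$-ball from which $|\mathfrak{s}|-1$ further solid-tube handles have been carved, i.e.\ it is a handlebody of genus $|\mathfrak{s}|-1$, with its boundary a union of horizontal pieces (subsurfaces of leaves of $\lambda$) and the annuli $\partial_vN(\mathfrak{B})$. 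I would argue via Lemma~\ref{Lsourcequad} (the $s_i$ are quadrilaterals with the four edges on the four distinct disks $U_1,U_2,V_1,V_2$) that the branch locus of $\mathfrak{B}$ has no double point, hence $\partial_vN(\mathfrak{B})$ is a disjoint union of annuli in one-to-one correspondence with the components of the branch locus. Then, because no two of these annuli lie in the same $I$-bundle component of $M_\lambda'$ and because there are no disks of contact, each annulus of $\partial_vN(\mathfrak{B})$ is incompressible and not boundary-parallel in $M\setminus\Int(N(\mathfrak{B}))$, which is what is needed to spine the complementary region: I would decompose the genus-$(|\mathfrak{s}|-1)$ handlebody along meridian disks dual to the carved handles so that each piece is a ball meeting $\partial_vN(\mathfrak{B})$ in a single annulus, exactly the situation of Lemma~\ref{Lfoliation}.

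Once $M\setminus\Int(N(\mathfrak{B}))$ is seen to be a handlebody each of whose "defining" annuli is essential and in distinct product components of $M_\lambda'$, the extension of $\lambda$ to a foliation is routine: fill each complementary ball piece with the product foliation $D^2\times I$ matching the single boundary annulus $(\partial D^2)\times I$, and fill the product pieces $M_\lambda'=F\times I$ with the trivial foliation by $F\times\{t\}$; these agree along $\partial_vN(\mathfrak{B})$ and along $\partial_hN(\mathfrak{B})\subset\lambda$, producing a foliation $\mathcal{F}$ of $M$. Transverse orientability of $\mathcal{F}$ follows from that of $\mathfrak{B}$, and tautness follows because the loops associated to the branch sectors of $\mathfrak{B}$ are transverse to $\mathfrak{B}$, hence can be isotoped transverse to $\mathcal{F}$, and collectively meet every leaf (every leaf either passes through a branch sector and meets the corresponding loop, or lies in the interior of a complementary region and meets the transversal coming through the adjacent branch sector). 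The step I expect to be the real obstacle is the claim that each complementary region of $N(\mathfrak{B})$ is spined by its vertical annuli into product/ball pieces — equivalently, that the annuli of $\partial_vN(\mathfrak{B})$ cut $M\setminus\Int(N(\mathfrak{B}))$ into balls each containing one annulus; this is where "no disk of contact with respect to $\lambda$" and "no two annuli in the same component of $M_\lambda'$" must be combined carefully, presumably invoking the incompressibility/end-incompressibility from \cite{GO} and an innermost-disk argument, and it is essentially the content this lemma is extracting before the harder case (disks of contact present) is treated via the Proposition promised for the next section.
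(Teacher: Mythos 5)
Your proposal misses the paper's key mechanism and relies on a claim that is false in general. You attempt to show that $E = M\setminus\Int(N(\mathfrak{B}))$, a handlebody of genus $|\mathfrak{s}|-1$, can be cut along meridian disks into balls each meeting $\partial_vN(\mathfrak{B})$ in exactly one annulus, and then to extend $\lambda$ to a foliation by filling each such ball with a product foliation $D^2\times I$. But no such decomposition exists in general: already for a solid torus with two annuli of $\partial_vN(\mathfrak{B})$ on its boundary, there may be no compressing disk separating the two annuli (a separating disk in a solid torus must be a meridian, and the two annuli can be parallel in the longitude direction). More to the point, the paper never tries to realize the complement of $\lambda$ in $N(\mathfrak{B})$ as an $I$-bundle; instead it extends $\lambda$ to a new lamination carried by $B_0'=B\setminus\Int(s_0)$, and invokes Corollary~\ref{Conemin} to get the $I$-bundle structure for $B_0'$.

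The actual proof works inside $E$, not by cutting it up, but by putting the deleted quadrilateral disks $s_1,\dots,s_m$ back. These, together with the horizontal boundary $R_\pm$, form a branched surface $B_E$ in $E$ with $\partial B_E\subset\partial_vN(\mathfrak{B})$. Because the $s_i$ are disjoint, the branch locus of $B_E$ has no double point, so $B_E$ fully carries a lamination $\lambda_E$. The restriction of $\lambda_E$ to each annulus $A\subset\partial_vN(\mathfrak{B})$ is a $1$-dimensional lamination transverse to the $I$-fibers; to complete the extension one must push this $1$-dimensional lamination into the corresponding infinitesimal region $M_A=F_A\times I$. This is exactly where the hypothesis ``no disk of contact'' is used: it guarantees that $F_A$ is not a disk, so $F_A$ is either compact of genus $\geq 1$ or non-compact with open ends, and in either case a $1$-dimensional lamination of $(\partial F_A)\times I$ transverse to the $I$-fibers extends across $F_A\times I$ via the fact that every element of $\mathrm{Homeo}_+(I)$ is a commutator (see \cite[Lemma 3.1 and Lemma 3.2]{L3} and \cite{G1, G4}). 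Your appeal to Gabai--Oertel essentiality and to incompressibility of the annuli does not capture this: if some $F_A$ were a disk, the $1$-dimensional lamination would generally carry nontrivial holonomy around $\partial F_A$ and could not be extended, and that is precisely the obstruction this hypothesis removes. So the missing ideas are (i) re-inserting the deleted disks to build $B_E$ and $\lambda_E$, and (ii) the $\mathrm{Homeo}_+(I)$-commutator extension across the infinitesimal pieces, gated by the no-disk-of-contact condition.
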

\begin{proof}
Let $E=M\setminus\Int(N(\mathfrak{B}))$.  The boundary of $E$ has 3 parts: 
\begin{enumerate}
    \item the components of $\partial_hN(\mathfrak{B})$ with induced normal direction pointing out of $E$, which we denote by $R_+$;
    \item the components of $\partial_hN(\mathfrak{B})$ with induced normal direction pointing into $E$, which we denote by $R_-$;
    \item the annuli in $\partial_vN(\mathfrak{B})$.
\end{enumerate}
One may view $E$ as a sutured manifold with $\partial_vN(\mathfrak{B})$ being its suture, see \cite{G1, G2, G3}. 

The branched surface $B_0'=B\setminus\Int(s_0)$ can be obtained by adding the disks in $\mathfrak{s}\setminus s_0$ back to $\mathfrak{B}$. Our goal is to show that the lamination $\lambda$ can be extended to a lamination fully carried by $B_0'$.

We may view the disks in $\mathfrak{s}\setminus s_0$ as properly embedded disks in $E=M\setminus\Int(N(\mathfrak{B}))$, and this is similar to the sutured manifold decomposition \cite{G1, G2, G3}. 
Let $C$ be the union of $R_+$, $R_-$ and all the disks in $\mathfrak{s}\setminus s_0$. 
Similar to the construction of $B$, we can deform $C$ into a transversely oriented branched surface $B_E$ whose transverse orientation agrees with the normal directions of $R_\pm$ and the disks in $\mathfrak{s}\setminus s_0$. 
$B_E$ is a branched surface with boundary and $\partial B_E\subset\partial_vN(\mathfrak{B})$.  

Since the disks $s_1,\dots, s_k$ are disjoint, the branch locus of $B_E$ has no double point. As explained before Lemma~\ref{Lnodisk}, this implies that $B_E$ fully carries a lamination $\lambda_E$. 

Now glue $R_+$ and $R_-$ back to $\partial_hN(\mathfrak{B})$. The restriction of the lamination $\lambda_E$ on $\partial_vN(\mathfrak{B})$ is a one-dimensional lamination of the annuli $\partial_vN(\mathfrak{B})$. Moreover, the one-dimensional lamination is transverse to the $I$-fibers.

Let $A$ be a component of $\partial_vN(\mathfrak{B})$ and let $M_A$ be the component of the infinitesimal region $M_\lambda'$ that contains $A$. 
So $M_A=F_A\times I$. Since we have assumed at the beginning that no two annuli of $\partial_vN(\mathfrak{B})$ belong to the same component of $M_\lambda'$. The surface $F_A$ has a single boundary circle and $A=(\partial F_A)\times I$.
Moreover, since $\mathfrak{B}$ has no disk of contact with respect to $\lambda$, $F_A$ is not a disk. 

Let $\mu_A$ be the restriction of $\lambda_E$ at $A$. So $\mu_A$ is a  one-dimensional lamination of $A$.

If $M_A$ is compact, since $F_A$ is not a disk, $F_A$ has genus at least one.  For a surface of genus at least one, any 1-dimensional foliation or lamination of $A=(\partial F_A)\times I$ that is transverse to the $I$-fibers can be extended to a 2-dimensional foliation or lamination of $F_A\times I$ (a key reason for this is that any $f\in Homeo_+(I)$ is a commutator, e.g.~see \cite[Lemma 3.1 and Lemma 3.2]{L3} and \cite{G1, G4}). Thus we can extend $\mu_A$ to a 2-dimensional lamination in $M_A$.

If $M_A$ is not compact, then $F_A$ is a non-compact surface with at least one open end. In this case, one can also extend $\mu_A$ to a 2-dimensional lamination in $M_A$.  To see this,  first consider the subcase that $F_A$ is a half-open annulus $S^1\times [0,\infty)$. In this subcase, we can extend $\mu_A$ to the lamination $\mu_A\times [0,\infty)$ in $M_A$.  
Any other surface type for $F_A$ can be obtained by taking punctures and adding genus to the half-open annulus $S^1\times [0,\infty)$.  
By the discussion in the case that $M_A$ is compact, and by \cite[Lemma 3.1 and Lemma 3.2]{L3}, we can take punctures on the product lamination $\mu_A\times [0,\infty)$ and extend the lamination through the higher genus portion.  So in all cases, $\mu_A$ can be extended to a 2-dimensional lamination in $M_A$.

By extending the lamination $\lambda_E$ from $\partial_vN(\mathfrak{B})$ into the infinitesimal region $M_\lambda'$, we obtain a lamination $\lambda'$. By our construction of $B_E$, $\lambda'$ is fully carried by $B_0'=B\setminus\Int(s_0)$.  Now the lemma follows from Corollary~\ref{Conemin}. 
\end{proof}

To prove Theorem~\ref{Tmain}, it remains to consider the case that $\mathfrak{B}$ has at least one disk of contact with respect to $\lambda$.

\begin{lemma}\label{Lalldisks}
At least one component of $\partial_vN(\mathfrak{B})$ does not bound a disk of contact.
\end{lemma}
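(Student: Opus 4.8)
The plan is to argue by contradiction: suppose every component of $\partial_vN(\mathfrak{B})$ bounds a disk of contact with respect to $\lambda$. The key observation is that a disk of contact is a disk $D\subset N(\mathfrak{B})$ transverse to the $I$-fibers with $\partial D$ in a single annulus $A\subset\partial_vN(\mathfrak{B})$, and the boundary $\partial D$ is a curve that, under the projection $\pi$, traces out the core of the annulus $A$ some number of times. Since $\mathfrak{B}$ is transversely oriented and the transverse orientation agrees with the associated positive loops, I would first note that the projection $\pi(D)$ is a subsurface of $\mathfrak{B}$ whose boundary lies in the branch locus component $L_A=\pi(A)$, and that the branch directions along $L_A$ all point toward $\pi(D)$ (or all point away) — this is forced because $D$ is embedded and transverse to the fibers. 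In other words, $D$ exhibits $L_A$ as a curve along which the cusps are coherently oriented, and $\partial D$ is homotopic in $M$ to a multiple of the loop obtained by going around $L_A$.

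Next I would extract the group-theoretic contradiction, in the spirit of the twisted-disk-of-contact argument in the proof of Lemma~\ref{Llam}. Recall that each branch sector of $\mathfrak{B}$ is associated with a \emph{positive} element of $\pi_1(M)$, and that Lemma~\ref{Lineq} holds for $\mathfrak{B}$. A disk of contact $D$ with $\partial D\subset A$ is, up to homotopy in $M$, a based loop built out of the associated loops of the branch sectors that $D$ passes through, read off coherently along $L_A$ according to the branch directions; since $D$ is an embedded disk in $M$, this loop is trivial in $\pi_1(M)$. But traversing $L_A$ coherently composes only positive group elements (this is exactly where the coherence of branch directions established in the first step is used, via Lemma~\ref{Lineq}), so the resulting element is positive, contradicting triviality. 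Hence no annulus of $\partial_vN(\mathfrak{B})$ can bound a disk of contact — except that this would prove something too strong; so I must be careful.

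The subtlety, and the main obstacle, is that the conclusion ``no component bounds a disk of contact'' would contradict the hypothesis that we are in the case where $\mathfrak{B}$ \emph{has} a disk of contact. The correct statement to prove is merely that \emph{at least one} component fails to bound one. So the real argument must be combinatorial: I would assign to each annulus $A_j$ of $\partial_vN(\mathfrak{B})$ that bounds a disk of contact $D_j$ an associated positive element $\delta_j\in\pi_1(M)$ (the loop around $L_{A_j}$ as above, which is positive by Lemma~\ref{Lineq} applied along $L_{A_j}$), and then observe that if \emph{every} component bounded a disk of contact, the disks $\{D_j\}$ could be assembled — using the product structure of the infinitesimal regions $M_\lambda'$ and the fact that $M\setminus\Int N(\mathfrak{B})$ together with these disks reconstructs pieces of the handlebody decomposition — into a closed surface or a relation forcing a product of positive elements (coming around all the $L_{A_j}$, with signs dictated by the global combinatorics of the branch locus of $\mathfrak{B}\subset S$) to be trivial. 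Concretely, the branch locus of $\mathfrak{B}$ is built from the curves $\partial U_1,\partial U_2,\partial V_1,\partial V_2$ on $S$, and a disk of contact bounded by a given component corresponds to a nontrivial null-homotopic element read along that component; having all of them bound disks of contact over-determines $\pi_1$ in a way incompatible with the positivity of $g_1,g_2,h_1,h_2$ and of the $\gamma_i$'s.

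I expect the technical heart to be step one — carefully showing that a disk of contact over an annulus $A$ forces the branch directions along $\pi(A)$ to be coherent and that the resulting ``monodromy loop'' is both well-defined in $\pi_1(M)$ and expressible as a product of the (positive) associated elements — since everything else reduces to invoking Lemma~\ref{Lineq} and the positivity built into the construction of $\mathfrak{B}$. I would structure the write-up as: (i) describe a disk of contact via its projection and the induced coherent cusp orientation; (ii) read off its boundary as a product of associated positive elements, hence a positive element of $\pi_1(M)$, hence nontrivial; (iii) deduce that if every component bounded such a disk, gluing them produces either a closed surface carried by $B$ (contradicting Lemma~\ref{Lnoclosed}, since $\mathfrak{B}$ omits $s_0$) or a trivial product of positive elements, a contradiction. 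Therefore at least one component of $\partial_vN(\mathfrak{B})$ does not bound a disk of contact.
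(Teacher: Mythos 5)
Your proposal has a genuine gap. The group-theoretic argument you lead with — that $\partial D$ of a disk of contact bounded by an annulus $A$ can be read off as a product of the associated positive elements, hence is positive and nontrivial in $\pi_1(M)$ — is incorrect, and you in fact half-realize this when you observe that it would ``prove something too strong.'' The issue is a conflation of ordinary disks of contact with \emph{twisted} disks of contact. For a twisted disk of contact (Figure~\ref{Ftwist2}), the branch sectors are \emph{attached to} the boundary of the disk with coherent branch direction, and going around $\partial D$ genuinely composes the associated elements of those attached sectors; that is what Lemma~\ref{Lineq} and the positivity hypothesis rule out in the proof of Lemma~\ref{Llam}. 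For an ordinary disk of contact in $\mathfrak{B}$, the boundary $\partial D$ projects to a component of the branch locus, which is a curve in the Heegaard surface $S$ built from \emph{arcs} of $\partial U_i$ and $\partial V_j$; its homotopy class is not a product of the associated loops $g_i,h_j,\gamma_k$ (which are dual to, i.e.~transverse to, the sectors, not running along their boundaries). So there is no positivity obstruction, and indeed ordinary disks of contact do occur — the whole framework of Lemmas~\ref{Latmostk}--\ref{Ladd} is built to deal with them.

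You do, at the very end of your sketch (item (iii)), glancingly mention the argument that the paper actually uses: if every component of $\partial_vN(\mathfrak{B})$ bounded a disk of contact, then after making the disks disjoint and cutting $N(\mathfrak{B})$ along all of them, one eliminates all the vertical boundary and obtains an $I$-bundle over a closed surface, i.e.~$\mathfrak{B}$ carries a closed surface; but $\mathfrak{B}$ is missing the quadrilateral $s_0$, so it cannot carry the Heegaard surface $S$, which by Lemma~\ref{Lnoclosed} is the only closed surface $B$ (and hence $\mathfrak{B}$) could carry. That is the complete proof — no $\pi_1$ positivity is needed at all, and the ``trivial product of positive elements'' alternative in your (iii) is a red herring. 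If you discard the group-theoretic framing entirely and promote the closed-surface / Lemma~\ref{Lnoclosed} observation to the main and only step (adding the remark that one can arrange the disks of contact bounded by different annuli to be disjoint by cutting and pasting), you recover the paper's argument.
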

\begin{proof}
Suppose on the contrary that every component of $\partial_vN(\mathfrak{B})$ bounds a disk of contact. 
After some cutting and pasting if necessary, we may assume that the disks of contact bounded by different components of $\partial_vN(\mathfrak{B})$ are disjoint. 

By cutting $N(\mathfrak{B})$ along these disks of contact, we eliminate $\partial_vN(\mathfrak{B})$ and obtain an $I$-bundle over a closed surface. This means that $\mathfrak{B}$ carries a closed surface.  
Any surface carried by $\mathfrak{B}$ is also carried by $B$. 
However, by Lemma~\ref{Lnoclosed}, the only closed and connected surface carried by $B$ is the Heegaard surface $S$, which moreover, is supported on the union of the sectors $s_i$. 
Since $\mathfrak{B}$ is obtained by removing at least one disk $s_0$ from $B$, $\mathfrak{B}$ does not carry the Heegaard surface $S$. This is a contradiction.
\end{proof}

 Let $A_1,\dots, A_q$ be all the annuli in $\partial_vN(\mathfrak{B})$ that bound disks of contact with respect to $\lambda$. 
 By Lemma~\ref{Lalldisks}, we have $1\le q<|\partial_vN(\mathfrak{B})|$. 
 Let $D_1,\dots D_q$ be a set of disks of contact with $\partial D_i\subset A_i$ ($i=1,\dots, q$) and $D_i\cap\lambda=\emptyset$. Moreover, the disks $D_1,\dots, D_q$ are disjoint in $N(\mathfrak{B})$.

Without loss of generality, suppose $\mathfrak{s}=\{s_0,\dots, s_m\}$ ($m\le k$).

In the proof of the next lemma, we need Proposition~\ref{Pbigon} in the next section.  The proof of Proposition~\ref{Pbigon} is surprisingly subtle and uses techniques very different from other parts of the paper. So we leave it to the last section.

\begin{lemma}\label{Latmostk}
Suppose $\mathfrak{s}=\{s_0,\dots, s_m\}$ ($m\le k$). 
The number of disks of contact is at most $m$, in other words, $q\le m$.   
\end{lemma}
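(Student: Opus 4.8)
The plan is to relate disks of contact for $\lambda$ to the quadrilateral disks we removed from $B$, via the associated group elements. Recall that the branch sectors of $\mathfrak{B}$ are associated with positive elements of $\pi_1(M)$ and that Lemma~\ref{Lineq} holds for $\mathfrak{B}$. A disk of contact $D_i$ is an embedded disk in $N(\mathfrak{B})$ transverse to the $I$-fibers with $\partial D_i\subset\partial_vN(\mathfrak{B})$; its boundary circle $\partial D_i$ projects to an immersed closed curve $\delta_i$ in $\mathfrak{B}$ running along branch sectors, and because $\partial D_i$ sits in the vertical boundary the curve $\delta_i$ is always "cusped the same way" — it traverses each branch locus arc it meets from the cusp side. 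The key point is that such a curve is null-homotopic in $M$ (it bounds the embedded disk $D_i$ in $N(\mathfrak{B})\subset M$), yet as one walks around $\delta_i$ the associated group element changes monotonically: crossing a branch-locus arc from a sector with element $p$ into the region past the cusp multiplies on the right (or left, depending on orientation) by the positive element associated to the third local sector, exactly as in Lemma~\ref{Lineq} and Figure~\ref{Fsplit2}. So going all the way around, the trivial total product forces all the crossings to cancel; since every factor is $\ge 1$ and the product is $1$, every factor that appears must in fact be $1$. Hence a disk of contact can only "wrap around" branch sectors whose associated elements are trivial in $\pi_1(M)$.

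Now I would make this precise. Consider the region $\Sigma_i\subset\mathfrak{B}$ through which $D_i$ passes, i.e.\ $\pi(D_i)$ where $\pi\colon N(\mathfrak{B})\to\mathfrak{B}$. This is a subsurface of $\mathfrak{B}$ whose closure is covered by branch sectors, and by the monotonicity argument above every branch sector of $\mathfrak{B}$ that $D_i$ passes through (more than the trivial amount) has trivial associated element, OR, more carefully: along $\partial D_i$ the successive associated elements are non-decreasing and return to the start, so they are all equal; call this common value $g_i\in\pi_1(M)$. Because $D_i$ is embedded and $\partial D_i$ is homotopically trivial in $M$ rel nothing, tracking a based loop through $D_i$ shows $g_i$ itself must be trivial — more directly, the boundary of $D_i$ is freely homotopic in $N(\mathfrak{B})$ to a loop transverse to the fibers reading off the product of the associated elements of the sectors it crosses with their cusp-signs, and since $\partial D_i$ bounds $D_i$ this product is $1$ in $\pi_1(M)$; with all factors $\ge 1$ this means each is $1$. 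Thus the branch sectors of $\mathfrak{B}$ that $D_i$ genuinely runs over are precisely (portions of) branch sectors carrying the trivial element. But the only branch sectors of $B$ with trivial associated element were $s_0,\dots,s_k$, and in $\mathfrak{B}=B\setminus\bigcup_{j\le m}\Int(s_j)$ these were \emph{removed} — what remains of them in $\mathfrak{B}$ after the removal and re-merging of sectors must be accounted for. I would argue that each disk of contact $D_i$ must, in projecting to $\mathfrak{B}$, run across (a neighborhood in $\mathfrak{B}$ of) one of the quadrilaterals $s_{m+1},\dots,s_k$ that were \emph{not} removed — these are the branch sectors of $\mathfrak{B}$ still carrying the trivial element. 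Since the $D_i$ are disjoint in $N(\mathfrak{B})$ and each consumes at least one such leftover trivial quadrilateral, and there are $k-m$ of them... wait — that would give $q\le k-m$, which is the wrong bound.

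Let me reconsider the bookkeeping, because the target is $q\le m$. The disks $s_0,\dots,s_m$ that we removed each contributed a component to $\partial_vN(\mathfrak{B})$-structure, or rather their removal \emph{created} the vertical annuli of $\mathfrak{B}$ that did not exist in $B$. So I would instead set up a correspondence between disks of contact $D_1,\dots,D_q$ and the \emph{removed} disks $s_1,\dots,s_m$ (excluding $s_0$, whose removal merged the two handlebody 3-balls rather than creating a closed-off region). Concretely: when we remove $\Int(s_j)$ for $1\le j\le m$, the complementary piece $M\setminus\Int(N(\mathfrak{B}))$ gains structure — think of the sutured-manifold picture of Lemma~\ref{Lnodisk}, where the $s_j$ are properly embedded "decomposing disks" — and a disk of contact with respect to $\lambda$ is, after the standard manipulations, forced to be isotopic (rel the lamination) to a disk whose projection is swept out by traversing around one of these $s_j$. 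The monotonicity-forces-triviality argument then shows a disk of contact is "dual" to the removal of some $s_j$, $1\le j\le m$. Since distinct disjoint $D_i$'s must correspond to distinct $s_j$'s (two disjoint embedded disks of contact cannot both wrap the same removed quadrilateral without the lamination or embeddedness being violated — here is where one invokes Proposition~\ref{Pbigon}, which presumably rules out a "bigon"-type coincidence of two disks of contact at the same quadrilateral), we get an injection from $\{D_1,\dots,D_q\}$ into $\{s_1,\dots,s_m\}$, hence $q\le m$.

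\textbf{The main obstacle.} The hard part is making rigorous the claim that each disk of contact corresponds to exactly one removed quadrilateral $s_j$ ($1\le j\le m$) and that this correspondence is injective — in particular ruling out that a single disk of contact could be "long and thin," wrapping around several removed quadrilaterals, or that two disjoint disks of contact could be associated to the same $s_j$. This is precisely the content that Proposition~\ref{Pbigon} is invoked for: I expect Proposition~\ref{Pbigon} to say that any "bigon" formed between a disk of contact and one of the quadrilaterals (or between two disks of contact) can be removed or does not occur, so that the disks of contact are in honest bijective-on-a-subset correspondence with the removed $s_j$'s. Setting up the right notion of "the quadrilateral associated to $D_i$" — likely via which vertical annuli $\partial D_i$ meets and how these annuli were created by the removals, combined with the group-element monotonicity to pin down that only trivial-element sectors can be involved — and then carefully counting with disjointness is where all the work lies; the group-theoretic input (everything positive, product trivial, hence everything trivial) is the easy engine that drives it.
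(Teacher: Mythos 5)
Your proposal does not follow the paper's argument, and the route you sketch has a genuine gap that you yourself flag but never close.

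The paper's proof of Lemma~\ref{Latmostk} is a short counting argument with ``corner points,'' not a group-theoretic one. Each removed quadrilateral $s_i$ ($i=0,\dots,m$) has four vertices; these give $4(m+1)$ corner points in the branch locus $L$ of $\mathfrak{B}$. Every component of $L$ contains an even number (at least two) of corner points. Proposition~\ref{Pbigon} is used to show that a component of $L$ with exactly two corner points cannot bound a disk of contact, so any component that does bound one must contain at least four corner points. Combined with Lemma~\ref{Lalldisks} (at least one component bounds no disk of contact, so at least two corner points are ``wasted''), the total budget $4(m+1)$ forces $q\le m$. That is the whole argument: Proposition~\ref{Pbigon} rules out ``bigon'' branch-locus components (in the corner-point sense) as boundaries of disks of contact, not bigons between two disks of contact or between a disk of contact and a quadrilateral as you guessed.

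By contrast, your group-theoretic ``monotonicity forces triviality'' idea is not set up correctly and, as you acknowledge mid-argument, leads you first to the wrong bound $q\le k-m$ and then to an unestablished ``duality'' between disks of contact and removed quadrilaterals. The key issue is that $\partial D_i$ runs along a component of the branch locus, not transversely across branch sectors, so the loop $\partial D_i$ does not simply read off a product of branch-sector elements whose triviality you could then exploit factor by factor. Taken at face value, your claim that the associated elements along $\partial D_i$ ``are non-decreasing and return to the start, so they are all equal'' and then ``each is~$1$'' would force adjacent branch sectors of $\mathfrak{B}$ to carry trivial elements, contradicting the construction of $\mathfrak{B}$ (all its branch sectors carry strictly positive elements). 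Moreover, even granting some correspondence $D_i\mapsto s_{j(i)}$, you have no mechanism making it injective, and your appeal to Proposition~\ref{Pbigon} to supply injectivity misreads what that proposition says. To fix the argument you would need to replace the homotopy bookkeeping with the combinatorial corner-point count above; that is where the bound $q\le m$ actually comes from, with Proposition~\ref{Pbigon} supplying the crucial ``at least four corners per disk-of-contact boundary'' step.
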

\begin{proof}
Recall that the branched surface $\mathfrak{B}$ is obtained by removing the disks in $\mathfrak{s}$ from $B$. By Lemma~\ref{Lsourcequad}, each disk $s_i$ ($i=0,\dots, m$) in $\mathfrak{s}$ is a quadrilateral with 4 corners.  Each corner of $s_i$ corresponds to a point in the branch locus of $\mathfrak{B}$ that connects an arc from $\partial U_i$ to an arc from $\partial V_j$ ($i, j= 1 \text{ or }2$). We call this point in the branch locus a corner point, see Figure~\ref{Fsource}.

Denote the branch locus of $\mathfrak{B}$ by $L$.  
Each component of $L$ (i.e. the projection of an annular component of  $\partial_vN(\mathfrak{B})$) consists of an even number of arcs alternately from $\{u_1, u_2\}$ and $\{v_1,  v_2\}$ ($u_i=\partial U_i$ and $v_i=\partial V_i$). 
In other words, each component of $L$ contains an even number of corner points. Moreover, it follows from Lemma~\ref{Lsourcequad} that each component of $L$ contains at least two corner points.

If a component $l$ of $L$ contains exactly 2 corner points, then $l$ consists of an arc from $u_i$ and an arc from $v_j$.  
Note that the two corner points must belong to two different disks in $\mathfrak{s}=\{s_0,\dots, s_m\}$, since the 4 boundary edges of each disk in $\mathfrak{s}$ belong to 4 distinct curves $u_1$, $u_2$, $v_1$, $v_2$, see Lemma~\ref{Lsourcequad} and Figure~\ref{Fsource}. 
Suppose $l$ contains exactly 2 corner points and suppose these 2 corner points are vertices of $s_0$ and $s_1$. 
Then $l$ can be viewed as a component of the branch locus of the branched surface $B_1=B\setminus (\Int(s_0)\cup \Int(s_1))$.  
If $l$ bounds a disk of contact in $\mathfrak{B}$, then $l$ also bounds a disk of contact in $B_1$ since $B_1$ can be obtained by adding some quadrilateral disks to $\mathfrak{B}$.  
By Proposition~\ref{Pbigon} in the next section, $B_1$ does not have such a bigon disk of contact. This implies that if $l$ bounds a disk of contact in $N(\mathfrak{B})$, then $l$ contains at least 4 corner points.

Since $\mathfrak{s}$ consists of $m+1$ quadrilateral disks, there are $4(m+1)$ corner points. 
By Lemma~\ref{Lalldisks}, at least one component of $\partial_vN(\mathfrak{B})$ does not bound a disk of contact. 
Thus the number of components of $\partial_vN(\mathfrak{B})$ that bound disks of contact is at most $m$. 
\end{proof}

Consider the sub-manifold $E=M\setminus\Int(N(\mathfrak{B}))$ of $M$. By the construction of $\mathfrak{B}$, $E$ can be viewed as the manifold obtained by connecting the two 3-balls $\mathcal{U}\setminus(U_1\cup U_2)$ and $\mathcal{V}\setminus(V_1\cup V_2)$ through the $m+1$ disks $s_0,\dots, s_m$ in $\mathfrak{s}$. 
Hence, $E$ is a handlebody of genus $m$.  

Now consider the disks of contact $D_1,\dots, D_q$ in $N(\mathfrak{B})$. 
Let $N(D_i)=D_i\times I$ be a small product neighborhood of $D_i$ in $N(\mathfrak{B})$.  We may view $N(D_i)$ as a 2-handle added to $E$. 
Let $\widehat{E}$ be the union of $E$ and these 2-handles $N(D_1),\dots, N(D_q)$.  

Let $\widehat{\mathfrak{B}}$ be the branched surface obtained by splitting $\mathfrak{B}$ along these disks of contact $D_1,\dots, D_q$. So $N(\widehat{\mathfrak{B}})$ can be obtained by cutting $N(\mathfrak{B})$ along $D_1,\dots, D_q$. 
Moreover,  $M\setminus\Int(N(\widehat{\mathfrak{B}}))=\widehat{E}$. 
Since we have assumed before Lemma~\ref{Lnodisk} that no two annuli in $\partial_vN(\mathfrak{B})$ lie in the same component of the infinitesimal region $M_\lambda'$ and since $D_i\cap\lambda=\emptyset$, $\widehat{\mathfrak{B}}$ also fully carries $\lambda$.

\begin{lemma}\label{Lball}
If $\partial\widehat{E}$ consists of 2-spheres, then $q=m$ and $\widehat{E}$ must be a 3-ball in the form of $D^2\times I$.  Moreover, $\lambda$ can be extended to a co-orientable taut foliation.
\end{lemma}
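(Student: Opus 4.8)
\textbf{Proof plan for Lemma~\ref{Lball}.}
The plan is to analyze the handlebody $E$ of genus $m$ after attaching the $q$ two-handles $N(D_1),\dots,N(D_q)$ and use an Euler characteristic count together with the results already established. First I would observe that attaching a 2-handle along an essential curve in $\partial E$ drops the genus by one while attaching along an inessential (disk-bounding) curve in $\partial E$ creates a separate 2-sphere boundary component (a ball summand). Since by hypothesis $\partial\widehat{E}$ consists only of 2-spheres, and $E$ is connected, the handles must be attached so that after all attachments every boundary component is a sphere; in particular at most one component of $\widehat{E}$ can contain the original handlebody $E$, and since $M$ is irreducible any 2-sphere boundary component bounds a ball on the outside (inside $N(\widehat{\mathfrak{B}})$ this would force a closed surface carried by $\widehat{\mathfrak{B}}$, contradicting Lemma~\ref{Lnoclosed} as in Lemma~\ref{Lball}'s statement). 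The cleanest route is the numerical one: each 2-handle attachment changes the Euler characteristic of the boundary by $+2$ when the attaching curve is non-separating and by $0$ (splitting off a sphere) or $-2$ otherwise; since $\chi(\partial E) = 2-2m$ and $\chi(\partial\widehat{E}) = 2\cdot(\text{number of sphere components})\ge 2$, the $q$ attachments must all be along non-separating curves and must bring the genus all the way down, which forces $q\ge m$. Combined with Lemma~\ref{Latmostk}, which gives $q\le m$, this yields $q=m$.

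Next I would argue that $\widehat{E}$ is connected and is a single 3-ball. Connectedness: attaching a 2-handle along a separating curve of $\partial E$ that separates it into two surfaces neither of which is a disk would split $\widehat{E}$ into two pieces at least one of which has non-spherical boundary, contradicting the hypothesis; attaching along a curve bounding a disk in $\partial E$ would split off a closed-up ball, again giving (after capping inside $N(\widehat{\mathfrak B})$) a closed surface carried by $\widehat{\mathfrak B}$, contradicting Lemma~\ref{Lnoclosed}. Hence every attachment is along a non-separating curve, $\widehat E$ stays connected, and after $m$ such compressions of a genus-$m$ handlebody we reach a genus-$0$ handlebody, i.e.\ a 3-ball. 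To see it has the product form $D^2\times I$ compatible with the branched surface structure, I note that $M\setminus\Int(N(\widehat{\mathfrak{B}}))=\widehat E$ is a 3-ball whose boundary sphere is decomposed by $\partial_vN(\widehat{\mathfrak{B}})$; one must check that after these compressions there is exactly one vertical annulus, so that it cuts the boundary 2-sphere into two disks, giving the $D^2\times I$ structure with $(\partial D^2)\times I=\partial_vN(\widehat{\mathfrak B})$ — this is the genus/Euler-characteristic bookkeeping applied to $\partial_vN(\mathfrak B)$ under the $q=m$ disk-of-contact splittings.

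Finally, with $\widehat E = D^2\times I$ and $(\partial D^2)\times I=\partial_vN(\widehat{\mathfrak B})$, I would conclude exactly as in the proof of Lemma~\ref{Lfoliation}: $\widehat{\mathfrak B}$ fully carries $\lambda$ (already noted, since $D_i\cap\lambda=\emptyset$ and no two vertical annuli of $\partial_vN(\mathfrak B)$ lie in the same component of $M_\lambda'$), the complement of $\lambda$ in $M$ is the product $\widehat E$ together with the product infinitesimal regions, hence $\lambda$ extends trivially to a foliation $\mathcal F$; transverse orientability of $\widehat{\mathfrak B}$ makes $\mathcal F$ co-orientable, and the loops associated to the branch sectors are transverse to $\mathcal F$ and hit every leaf, so $\mathcal F$ is taut.

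The main obstacle I anticipate is the combinatorial bookkeeping tying the number and isotopy type of the compressing curves (the $\partial D_i$) on $\partial E$ to the structure of $\partial_vN(\mathfrak B)$ — specifically, verifying that all $m$ attachments are genuinely along non-separating curves (rather than, say, some along separating curves in a way that is still consistent with spherical boundary after capping) and that the resulting single vertical annulus splits the boundary sphere correctly. The leverage for ruling out the bad cases is always the same: a separating or inessential attaching curve would produce, after capping a spherical boundary component inside $N(\widehat{\mathfrak B})$, a closed surface carried by $\widehat{\mathfrak B}$ and hence by $B$, contradicting Lemma~\ref{Lnoclosed} since $s_0$ has been removed. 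Once that is nailed down, the Euler-characteristic equality $q=m$ and the product structure follow formally, and the foliation extension is a verbatim repeat of Lemma~\ref{Lfoliation}.
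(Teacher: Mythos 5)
Your Euler-characteristic count for $q=m$ and a single spherical boundary component is correct and is essentially what the paper does (stated tersely). The foliation-extension step at the end is likewise a correct repeat of Lemma~\ref{Lfoliation}. The gaps are in the middle, and they are genuine.

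First, the claim that ``every attachment is along a non-separating curve, $\widehat E$ stays connected, and after $m$ such compressions of a genus-$m$ handlebody we reach a genus-$0$ handlebody, i.e.\ a 3-ball'' does not follow. Attaching a 2-handle to a handlebody $E$ along a curve that is non-separating in $\partial E$ yields a handlebody of one lower genus \emph{only} when that curve bounds a disk in $E$ (i.e.\ is a meridian). Nothing you have said establishes that the curves $\partial D_i$ are meridians of $E$; in general, attaching a 2-handle along a non-meridional non-separating curve of a genus-$m$ handlebody gives a manifold with $S^2$ boundary that is not a ball (e.g.\ a punctured lens space from a solid torus). The paper sidesteps this entirely: since $M$ is irreducible, the 2-sphere $\partial\widehat E$ bounds a 3-ball in $M$; if that ball were $M\setminus\widehat E$ rather than $\widehat E$, then $\pi_1(\widehat E)\cong\pi_1(M)$, but $\pi_1(\widehat E)$ is a quotient of $\pi_1(E)$, which is generated by the loops $\gamma_1,\dots,\gamma_m$, and these are all trivial in $\pi_1(M)$ because $\mathfrak{s}=\{s_0,\dots,s_m\}$; so $\pi_1(M)$ would be trivial, a contradiction. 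Your substitute observation (that a ball on the $N(\widehat{\mathfrak B})$ side would ``force a closed surface carried by $\widehat{\mathfrak B}$'') is unjustified --- a ball complement does not by itself produce a carried closed surface --- so as written the argument that $\widehat E$ is the ball, and not the complement, is missing.

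Second, the statement that $\partial_vN(\widehat{\mathfrak B})$ has exactly one component --- so that the boundary 2-sphere of the 3-ball is cut into two disks, giving the $D^2\times I$ structure --- is a substantive claim, not bookkeeping. You flag it as the ``main obstacle'' but do not resolve it. The paper's proof is a two-step counting argument: the $4(m+1)$ corner points of the quadrilaterals in $\mathfrak{s}$, together with Proposition~\ref{Pbigon} (each component of $\partial_vN(\mathfrak B)$ that bounds a disk of contact has $\ge 4$ corner points, and every component has $\ge 2$), force $\partial_vN(\widehat{\mathfrak B})$ to have one or two components; and then a separate sutured-manifold argument, tracking the planar plus/minus boundary pieces of $\partial_hN(\mathfrak B)$ inherited from the two balls $\Delta_{\mathcal U}\times\partial I$ and $\Delta_{\mathcal V}\times\partial I$, rules out the two-component case. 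Your proposal needs this (or an equivalent argument) to actually obtain the product structure; without it the passage from ``$\widehat E$ is a ball'' to ``$\widehat E\cong D^2\times I$ with $(\partial D^2)\times I=\partial_vN(\widehat{\mathfrak B})$'' is incomplete.
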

 \begin{proof}
 $\widehat{E}$ is obtained by adding 2-handles to $E$. 
 By Lemma~\ref{Latmostk}, the number of 2-handles is at most $m$ (i.e.~$q\le m$). Since $E$ is a handlebody of genus $m$, if  $\partial\widehat{E}$ consists of 2-spheres, since $\chi(\partial\widehat{E})=2-2m+2q\leq 2$, we have $q=m$ and $\partial\widehat{E}$ is a single 2-sphere.  
Next, we suppose $q=m$.

\begin{claim*}
$\partial_vN(\mathfrak{B})$ has $m+1$ components and $\partial_vN(\widehat{\mathfrak{B}})$ has one component.
\end{claim*}
\begin{proof}[Proof of the claim] 
In the proof of Lemma~\ref{Latmostk}, we see that the quadrilaterals of $\mathfrak{s}$ have total $4(m+1)$ corners. 
If a component of $\partial_vN(\mathfrak{B})$ bounds a disk of contact, then it contains at least 4 corner points.  Since $q=m$,  there are at most 4 corner points not in the boundary of a disk of contact.  
As each component of the branch locus contains at least two corner points, 
this means that there are either one or two components of $\partial_vN(\mathfrak{B})$ that do not bound disks of contact, in other words, $\partial_vN(\widehat{\mathfrak{B}})$ has one or two components.
Next, we rule out the possibility that $\partial_vN(\widehat{\mathfrak{B}})$ has two components.

Since $\partial\widehat{E}$ is a 2-sphere, if $\partial_vN(\widehat{\mathfrak{B}})$ has two components, then the two annuli in $\partial_vN(\widehat{\mathfrak{B}})$ divide the 2-sphere $\partial\widehat{E}$ into 3 components: 2 disks and one annulus, in other words, $\partial_hN(\widehat{\mathfrak{B}})$ consists of two disks and one annulus. 
As $\widehat{E}$ is obtained by adding 2-handles to $E=M\setminus\Int(N(\mathfrak{B}))$, 
this implies that $\partial_hN(\mathfrak{B})$ has 3 components, all of which are planar surfaces. 

We can view $M\setminus\Int(N(\mathfrak{B}))$ as a sutured manifold, where  $\partial_vN(\mathfrak{B})$ is the suture. 
As in the proof of Lemma~\ref{Lnodisk}, a component of  $\partial_hN(\mathfrak{B})$ is in the plus (resp.~minus) boundary of $M\setminus\Int(N(\mathfrak{B}))$ if the induced normal direction points out of (resp.~into)  $M\setminus\Int(N(\mathfrak{B}))$.
Without loss of generality, we may suppose the plus boundary of $\partial_hN(\mathfrak{B})$ has one component and the minus boundary of $\partial_hN(\mathfrak{B})$ has two components.
Similarly, $\widehat{E}$ can also be viewed as a sutured manifold. So the plus boundary of $\widehat{E}$ is an annulus and the minus boundary of $\widehat{E}$ consists of two disks.

Recall that $B$ is the union of $S$ and the disks $U_1$, $U_2$, $V_1$, $V_2$. 
Now consider $N(B)$. We divide $\partial_hN(B)$ into plus and minus boundary similar to $\partial_hN(\mathfrak{B})$. 
$M\setminus B$ consists of two 3-balls, one in $\mathcal{U}$ and one in $\mathcal{V}$. The closure of the 2 components of $M\setminus B$ in $\mathcal{U}$ and $\mathcal{V}$ are of the form $\Delta_\mathcal{U}\times I$ and $\Delta_\mathcal{V}\times I$ respectively, where $\Delta_\mathcal{U}$ and $\Delta_\mathcal{V}$ are disks, $\partial\Delta_\mathcal{U}\times\partial I$ and $\partial\Delta_\mathcal{V}\times\partial I$ are the 4 cusp circles of the branched surface $B$. 
By the transverse orientation of $B$, the plus boundary of $M\setminus\Int(N(B))$ corresponds to the two disks $\Delta_\mathcal{U}\times\partial I$ and the annulus $\partial\Delta_\mathcal{V}\times I$. 

The branched surface $\mathfrak{B}$ is obtained by deleting the quadrilateral disks in $\mathfrak{s}$ from $B$.  The two sides of each disk in $\mathfrak{s}$ are rectangles in the two annuli $\partial\Delta_\mathcal{U}\times I$ and  $\partial\Delta_\mathcal{V}\times I$ respectively. 
Thus, the plus boundary of $M\setminus\Int(N(\mathfrak{B}))$ is obtained by connecting the two disks $\Delta_\mathcal{U}\times\partial I$ with rectangles in the annulus $\partial\Delta_\mathcal{V}\times I$, see Figure~\ref{Fsource}(b) for a local picture. 

We have concluded earlier that each component of $\partial_hN(\mathfrak{B})$ is a planar surface, and we have assumed that the plus boundary of $\partial_hN(\mathfrak{B})$ has one component.  
Since there are $m+1$ disks in $\mathfrak{s}$,  the two disks in $\Delta_\mathcal{U}\times\partial I$ are connected with $m+1$ rectangles. In other words, the plus boundary of $\partial_hN(\mathfrak{B})$ can be obtained by connecting two disks using $m+1$ rectangular (2-dimensional) 1-handles. 
Since the  plus boundary of $\partial_hN(\mathfrak{B})$ is a connected planar surface, this implies that the number of boundary circles of the plus boundary of $\partial_hN(\mathfrak{B})$ is $m+1$. 
As $\widehat{E}$ is obtained by adding $q$ 2-handles to $M\setminus \Int(N(\mathfrak{B}))$ and since $q=m$, the plus boundary of $\widehat{E}$ is obtained by capping off $m$ boundary circles of the plus boundary of $\partial_hN(\mathfrak{B})$. 
This means that the plus boundary of $\widehat{E}$ must have a single boundary circle, contradicting our assumption earlier that the plus boundary of $\widehat{E}$ is an annulus and hence has two boundary circles. 
This finishes the proof of the claim.
\end{proof}

Since $M$ is irreducible, the 2-sphere $\partial\widehat{E}$ bounds a 3-ball in $M$. There are two possibilities.  The first possibility is that $\widehat{E}$ is a 3-ball. Since $\partial_vN(\widehat{\mathfrak{B}})$ has only one component, this implies that the 3-ball  $\widehat{E}$ is a product in the form of $D^2\times I$ and $(\partial D^2)\times I=\partial_vN(\widehat{\mathfrak{B}})$. 
Since $\widehat{\mathfrak{B}}$ also fully carries the lamination $\lambda$, similar to the proof of Lemma~\ref{Lfoliation}, the complement of $\lambda$ in $M$ consists of $I$-bundles, which means that $\lambda$ trivially extends to a foliation. As in the proof of Lemma~\ref{Lfoliation}, this is a co-orientable taut foliation and the lemma holds.
 
 The remaining possibility is that $M\setminus\widehat{E}$ is a 3-ball.  In this case, $\pi_1(\widehat{E})=\pi_1(M)$.  
 However, $\pi_1(E)$ (and hence $\pi_1(\widehat{E})$) is generated by the loops $\gamma_1,\dots, \gamma_m$ associated with the disks $s_1,\dots, s_m$  (see section~\ref{Sbr}).   
 By the definition of $\mathfrak{s}=\{s_0,\dots, s_m\}$ ($m\le k$),  $\gamma_1,\dots, \gamma_m$ represent the  trivial element of $M$. 
 This implies that $\pi_1(\widehat{E})=\pi_1(M)$ is a trivial group, a contradiction to our hypotheses on $M$. 
 \end{proof}

In light of Lemma~\ref{Lball}, we may suppose that not all components of $\partial\widehat{E}$ are 2-spheres.  Hence $H_1(\widehat{E},\mathbb{Q})$ is non-trivial. This implies that the group $\pi_1(\widehat{E})$ is left-orderable, e.g.~see \cite{BRW}, and $\pi_1(\widehat{E})$ is not the trivial group.

Fix a left order $<_E$ for $\pi_1(\widehat{E})$ and consider the orders of the associated elements $\gamma_0,\dots,\gamma_m$ in $\pi_1(\widehat{E})$. 
Recall that, in section~\ref{Sbr}, the loops $\gamma_0,\dots,\gamma_m$ are constructed using a special disk $s_0$. 
Similar to the argument in section~\ref{Sbr}, a different choice of this special disk does not change the relative orders of these associated elements in $\pi_1(\widehat{E})$. 
Hence, as in section~\ref{Sbr}, we choose the special disk $s_0$ to be a disk whose associated element has minimal order (among $\{\gamma_0,\dots,\gamma_m\}$) in $\pi_1(\widehat{E})$.  
Since $\gamma_0=1$ in $\pi_1(\widehat{E})$ and since the order of $\gamma_0$ is minimal, we have $\gamma_i\ge_E 1$ in $\pi_1(\widehat{E})$ for all $i=0,\dots, m$. 

As in the proof of Lemma~\ref{Lball}, $\pi_1(\widehat{E})$ is generated by $\gamma_1,\dots, \gamma_m$.  
Since $\pi_1(\widehat{E})$ is not trivial, at least one element $\gamma_i$ ($1\le i\le m$) is non-trivial in $\pi_1(\widehat{E})$. 
Without loss of generality, suppose $\gamma_1,\dots,\gamma_p$ are non-trivial in $\pi_1(\widehat{E})$ and $\gamma_i=1$ in $\pi_1(\widehat{E})$ if $i>p$ or $i=0$. 
Thus, $\gamma_1,\dots,\gamma_p$ are positive elements in $\pi_1(\widehat{E})$.

\begin{lemma}\label{Ladd}
Let $\mathfrak{B}'$ be the branched surface obtained by adding back the branch sectors $s_1,\dots,s_p$ to $\mathfrak{B}$. Then $\mathfrak{B}'$ fully carries a lamination.
\end{lemma}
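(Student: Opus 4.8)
\textbf{Proof proposal for Lemma~\ref{Ladd}.}

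The plan is to mimic the proof of Lemma~\ref{Llam}, but now working relative to the lamination $\lambda$ already carried by $\mathfrak{B}$, using the left order $<_E$ on $\pi_1(\widehat{E})$ as the splitting instruction for the newly added portion of the branched surface. First I would set up the picture: since $\mathfrak{B}'$ is obtained from $\mathfrak{B}$ by adding back the quadrilateral disks $s_1,\dots,s_p$, each of which has its four edges on $u_1,u_2,v_1,v_2$ with branch directions pointing out of $s_i$ (Lemma~\ref{Lsource} and Lemma~\ref{Lsourcequad}), the branch locus of $\mathfrak{B}'$ differs from that of $\mathfrak{B}$ only near these disks. The key point is that each $s_i$ ($1\le i\le p$) is associated with the element $\gamma_i$, and by our choice of $s_0$ and the ordering of the $\gamma_i$'s, these are \emph{positive} elements of $\pi_1(\widehat{E})$; the disks $s_{p+1},\dots,s_m$ that we do \emph{not} add back have $\gamma_i=1$ in $\pi_1(\widehat{E})$. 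I would re-examine Lemma~\ref{Lineq}: its proof is purely local and homotopy-theoretic, so it holds for $\mathfrak{B}'$ with the associated elements now read in $\pi_1(\widehat{E})$ (the loops still intersect $\mathfrak{B}'$ transversely once per branch sector), and the relation $q=pr$, $q\ge_E p$ persists.

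Next I would carry out the splitting argument. Start from $N(\mathfrak{B}')$, isotoped so that $\partial_hN(\mathfrak{B})\subset\lambda$, and perform the local splittings of Figure~\ref{Fsplit1}/Figure~\ref{Fsplit2} in all directions, with the splitting instruction dictated by $<_E$ exactly as in Lemma~\ref{Llam}: at a local configuration with associated elements $a,b,c,d$ satisfying $ab=cd$, split according to whether $c>_E a$, $c=_E a$, or $c<_E a$. This guarantees every newly created branch sector gets a positive associated element and that Lemma~\ref{Lineq} is preserved throughout. As before, the only local obstructions are twisted disks of contact, which cannot occur: the loop around a twisted disk of contact is a product of the positive associated elements of the attached branch sectors, hence a positive (in particular nontrivial) element of $\pi_1(\widehat{E})$, while it is also null-homotopic in $\widehat{E}$ since it bounds a disk there — a contradiction. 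Here I must be slightly careful, because the relevant disk of contact lies in $N(\mathfrak{B}')\subset M$, not a priori in $\widehat{E}$; but the boundary of a (twisted) disk of contact is a loop carried by $\mathfrak{B}'$ that bounds an embedded disk transverse to the $I$-fibers, and such a loop, pushed off $\lambda$, is homotopic into $\widehat{E}=M\setminus\Int N(\mathfrak{B})$ (it misses the added disks $s_i$), so its class lies in the image of $\pi_1(\widehat{E})\to\pi_1(M)$ and is trivial there; combined with positivity in $\pi_1(\widehat{E})$ this is the contradiction. This ``relative'' bookkeeping is the step I expect to require the most care.

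The last thing to rule out is that the splitting process terminates after finitely many steps, which happens only if we split along a compact surface $F$ with $\partial F\subset\partial_vN(\mathfrak{B}')$ meeting each vertical annulus in one circle, i.e.\ $\mathfrak{B}'$ carries a closed surface. Any closed surface carried by $\mathfrak{B}'$ is carried by $B$, and by Lemma~\ref{Lnoclosed} the only such surface is the Heegaard surface $S$; since $\mathfrak{B}'$ is obtained from $B$ by deleting at least the disk $s_0$ (and the $s_i$ with $i>p$), it does not carry $S$, a contradiction. Hence the splitting continues indefinitely in all directions, and its inverse limit (cf.~\cite{MO, L1, L8}) is a lamination fully carried by $\mathfrak{B}'$. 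Since $\lambda$ was already carried by $\mathfrak{B}\subset\mathfrak{B}'$ and $\partial_hN(\mathfrak{B})\subset\lambda$, one checks that this inverse-limit lamination contains (an isotopic copy of) $\lambda$, so $\mathfrak{B}'$ in fact fully carries a lamination extending $\lambda$, completing the proof.
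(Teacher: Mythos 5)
Your proposal does not take the paper's route, and as written it has a genuine gap at the very start: you cannot consistently use $<_E$ as a splitting instruction for the \emph{whole} branched surface $\mathfrak{B}'$. The branch sectors of $\mathfrak{B}'$ inherited from $\mathfrak{B}$ (the disks $U_1,U_2,V_1,V_2$ and the remaining $s_j$'s not in $\mathfrak{s}$) are associated with loops like $g_i$, $h_i$ that pass through $N(\mathfrak{B})$ and hence do \emph{not} lie in $\widehat{E}$; they represent elements of $\pi_1(M)$, not of $\pi_1(\widehat{E})$. Conversely, the $\gamma_i$ for the added disks $s_1,\dots,s_p$ are trivial in $\pi_1(M)$, so the original order on $\pi_1(M)$ cannot direct the splitting there. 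So ``apply $<_E$ exactly as in Lemma~\ref{Llam}'' is not a well-defined instruction for $\mathfrak{B}'$, and Lemma~\ref{Lineq} does not simply ``persist with elements read in $\pi_1(\widehat{E})$'' for the sectors coming from $\mathfrak{B}$. The paper avoids this precisely by \emph{not} splitting $\mathfrak{B}'$ directly: it works inside the sutured piece $E=M\setminus\Int N(\mathfrak{B})$, building a relative branched surface $C$ out of $R_+$, $R_-$, and $s_1,\dots,s_p$, where $R_\pm$ carry no group data and are held fixed, and the only associated elements are those of the $s_i$'s, which genuinely live in $\pi_1(\widehat{E})$ because the loops $\gamma_i$ lie in $E$.

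Two further steps in the paper's proof are entirely missing from your proposal. First, the disks of contact $D_1,\dots,D_q$ of $\mathfrak{B}$ with respect to $\lambda$ must be dealt with: the paper first splits $C$ near each annulus $A_i$ until $C|_{A_i}$ is a union of circles, then passes to $\widehat{E}$ by attaching the $2$-handles $N(D_i)$ and capping off those circles by disks. Without this, the resulting $1$-dimensional lamination on $A_i$ cannot be extended into the infinitesimal region (whose base $F_{A_i}$ is a disk there). Second, after obtaining the lamination $\lambda_C$ carried by $C$, one must extend its trace on $\partial_vN(\mathfrak{B})$ through the infinitesimal region $M_\lambda'$ as in Lemma~\ref{Lnodisk} to glue it to $\lambda$ and produce a lamination fully carried by $\mathfrak{B}'$. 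Your closing claim that the inverse limit of a direct splitting of $\mathfrak{B}'$ ``contains an isotopic copy of $\lambda$'' is an assertion, not an argument, and in the direct-splitting framework there is no mechanism guaranteeing compatibility with $\lambda$. In short, the idea of ``splitting guided by positivity of the $\gamma_i$ in $\pi_1(\widehat{E})$'' is the right kernel, but the proof must be executed relatively inside $\widehat{E}$ and must include the capping-off and gluing steps; the global splitting you describe is not well posed.
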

\begin{proof}
The proof is a combination of the proofs of Lemma~\ref{Llam} and Lemma~\ref{Lnodisk}.

Recall that $\mathfrak{B}$ fully carries a lamination $\lambda$. After some isotopy on $\lambda\subset N(\mathfrak{B})$, we may assume $\partial_hN(\mathfrak{B})\subset\lambda$. 

First consider $E=M\setminus\Int(N(\mathfrak{B}))$.  
Similar to the proof of Lemma~\ref{Lnodisk}, the boundary of $E$ has 3 parts: 
\begin{enumerate}
	\item the components of $\partial_hN(\mathfrak{B})$ with induced normal direction pointing out of $E$, which we denote by $R_+$;
	\item the components of $\partial_hN(\mathfrak{B})$ with induced normal direction pointing into $E$, which we denote by $R_-$;
	\item the annuli in $\partial_vN(\mathfrak{B})$.
\end{enumerate}
We may view $s_1,\dots, s_p$ as properly embedded disks in $E=M\setminus\Int(N(\mathfrak{B}))$. 
Let $C$ be the union of $R_+$, $R_-$ and the disks $s_1,\dots, s_p$. 
As in the proof of Lemma~\ref{Lnodisk}, deform $C$ into a transversely oriented branched surface. 

Next we split the branched surface $C$ by sliding the disks $s_1,\dots, s_p$ while leaving $R_+$ and $R_-$ invariant.  
The splitting instruction is basically the same as the proof of Lemma~\ref{Llam} except that branch sectors in $R_+$ and $R_-$ do not have any associated group elements in $\pi_1(\widehat{E})$. 
Consider the splittings in Figure~\ref{FsplitE}, where $a$, $b$, $c$ and $d$ denote the loops and elements in $\pi_1(\widehat{E})$ associated with the corresponding branch sectors of $C$. 
Near $R_+$, we perform the splitting as illustrated in Figure~\ref{FsplitE}(a) if $c>_E a$. 
Near $R_-$, we perform the splitting as in Figure~\ref{FsplitE}(b) if $d^{-1}>_E b^{-1}$ (compare with Figure~\ref{Fsplit2}). 
So there is no real difference in our splitting instruction. 
If $c=a$ at $R_+$ (or $d^{-1}=b^{-1}$ at $R_-$), we perform a splitting as illustrated in the middle picture of Figure~\ref{Fsplit1}. 
Recall that $\gamma_1,\dots,\gamma_p$ are positive elements in $\pi_1(\widehat{E})$. 
As explained in the proof of Lemma~\ref{Llam}, following the splitting instruction, any new branch sector created by the splitting is associated with a positive element in $\pi_1(\widehat{E})$.
Similarly,  if $c=a$ or $d^{-1}=b^{-1}$, the splitting changes the complement of the branched surface, but it does not affect the analysis. 

\begin{figure}[h]
	\vskip 0.3cm
	\begin{overpic}[width=4in]{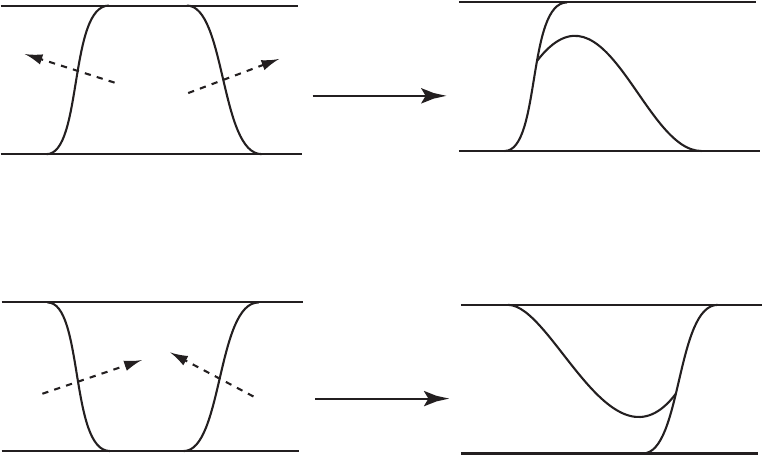}
		\put(47,28){(a)}
		\put(47,-5){(b)}
		\put(14,46){$c$}
		\put(24,44){$a$}
		\put(13.5,7){$d$}
		\put(24,7.5){$b$}
		\put(-5,57){$R_+$}
		\put(-5,18){$R_+$}
		\put(-5,37){$R_-$}
		\put(-5,-2){$R_-$}
		\put(42,50){splitting}
	\end{overpic}
	\vskip 0.3cm
	\caption{Splitting near $R_\pm$}\label{FsplitE}
\end{figure}

Let $A_i$ be the component of $\partial_vN(\mathfrak{B})$ that bounds the disk of contact $D_i$. 
The core curve of $A_i$ represents the trivial element of $\pi_1(\widehat{E})$. 
We first split the branched surface $C$ at $A_i$. Since each branch sector that is not in $R_\pm$ is associated with a positive element in $\pi_1(\widehat{E})$, similar to the discussion about twisted disks of contact in the proof of Lemma~\ref{Llam}, after some splittings, $C|_{A_i}$ becomes a collection of circles. 
Now we add the 2-handle $N(D_i)$ to $E$ along $A_i$ and cap off each circle in $C|_{A_i}$ using a disk in $N(D_i)$.  After performing this operation for all the annuli $A_1,\dots, A_q$ that bound disks of contact, we obtained a branched surface $\widehat{C}$ in $\widehat{E}$.

As in the proof of Lemma~\ref{Llam}, the splitting instruction from the left order of $\pi_1(\widehat{E})$ allows us to split the branched surface $\widehat{C}$ indefinitely and in all directions. The inverse limit of the splitting process is a lamination fully carried by $\widehat{C}$.  Moreover, this means that the original branched surface $C$ fully carries a lamination $\lambda_C$ whose restriction on each annulus $A_i$ ($i=1,\dots,q)$ of $\partial_vN(\mathfrak{B})$ is a collection of circles.  

The restriction of $\lambda_C$ at each annulus of $\partial_vN(\mathfrak{B})$ is a 1-dimensional lamination transverse to the $I$-fibers. 
Since the restriction of $\lambda_C$ on each $A_i$ ($i=1,\dots,q)$ is a collection of circles, we can cap off these circles by disks in $N(D_i)$.  
By our construction of $\widehat{E}$, no other component of $\partial_vN(\mathfrak{B})$ bounds a disk of contact. 
Thus, as in the proof of Lemma~\ref{Lnodisk}, we can extend the 1-dimensional lamination at $\partial_vN(\mathfrak{B})$ into the infinitesimal region $M_\lambda'$ and obtain a 2-dimensional lamination. 
Similarly, the resulting lamination is fully carried by $\mathfrak{B}'$. 
\end{proof}

Lemma~\ref{Ladd} says that if $\mathfrak{B}$ fully carries a lamination, we can add more disks to $\mathfrak{B}$ and obtain a ``larger" branched surface $\mathfrak{B}'$ which also fully carries a lamination. 
By inductively applying the argument above to the new branched surface $\mathfrak{B}'$, we can continue this process until $\mathfrak{B}'=B_0'=B\setminus\Int(s_0)$ (the choice of the special disk $s_0$ may change during the process).  
Therefore, $B_0'$ fully carries a lamination. By Corollary~\ref{Conemin}, this lamination can be extended to a co-orientable taut foliation.  This proves Theorem~\ref{Tmain}.

\section{Disk of contact}

In this section, we rule out the possibility that our branched surface admits a certain special disk of contact. 

Let $s_0$ and $s_1$ be two disks whose associated elements are trivial. Consider the branched surface $B_1$ obtained by removing the interior of $s_0$ and $s_1$ from $B$, i.e.~$B_1=B\setminus\Int(s_0\cup s_1)$.  By our construction, $M\setminus B_1$ is a solid torus. 
We call the vertices of the two quadrilaterals $s_0$ and $s_1$ the corner points in the branched locus.  
As in the proof of Lemma~\ref{Latmostk}, each component of the branch locus contains an even number of corner points. 
The goal of this section is to prove Proposition~\ref{Pbigon}, which says that if a component of the branched locus contains exactly 2 corner points, then it does not bound a disk of contact in $N(B_1)$. In other words, there is no bigon disk of contact. 

\begin{proposition}\label{Pbigon}
	If a component of $\partial_vN(B_1)$ contains exactly 2 corner points, then it does not bound a disk of contact in $N(B_1)$.
\end{proposition}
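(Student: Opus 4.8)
The plan is to suppose, for contradiction, that a component $l$ of $\partial_vN(B_1)$ contains exactly $2$ corner points and bounds a disk of contact $D\subset N(B_1)$. Since $l$ has exactly $2$ corner points, the branch locus circle $\pi(l)$ consists of one arc $\beta\subset u_i=\partial U_i$ and one arc $\tau\subset v_j=\partial V_j$ (for fixed $i,j\in\{1,2\}$). The two corner points are shared between the quadrilaterals $s_0$ and $s_1$; in particular both $s_0$ and $s_1$ contribute one corner to $l$, and (by Lemma~\ref{Lsourcequad}) $\beta$ and $\tau$ record a segment of $\partial U_i$ and of $\partial V_j$ running between the corners of $s_0$ and $s_1$. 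The disk of contact $D$ projects to an immersed disk in $B_1$ whose boundary double-covers (or more precisely wraps around) this bigon-shaped branch locus component. The strategy is to use this $D$ to produce an embedded essential sphere or an embedded incompressible surface in $M$, contradicting irreducibility, or alternatively to produce a compressing disk for the Heegaard surface that contradicts the no-wave / minimal-complexity hypothesis of Section~\ref{Ssetup}.

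First I would analyze what a disk of contact bounded by a bigon branch-locus component looks like. Pushing $D$ slightly off $N(B_1)$ into $M\setminus\Int(N(B_1))$, which is a solid torus $W$ (as noted, $M\setminus B_1$ is a solid torus), the boundary curve $\partial D$ lies on the annulus $l=\partial_vN(B_1)$, which is a sub-annulus of $\partial W$. So $D$ is a properly embedded disk in $W$ with $\partial D\subset\partial W$; since $D$ is a disk of contact (meeting some $I$-fiber of $N(B_1)$ twice) its boundary on the annulus $l$ runs over the branch locus and the curve $\partial D$ in $\partial W$ is a curve that crosses each of $\beta$ and $\tau$ twice with consistent cusp direction. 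The key point is that $\partial D$, being essential in $l$ or not, must be either inessential in $\partial W$ — in which case $D$ together with a disk in $\partial W$ is a sphere bounding a ball by irreducibility, and one traces the consequence back to $B_1$ forcing a bigon between some $u_i$ and $v_j$, contradicting tight intersection (Lemma~\ref{Ldisk} and the fact that $\widehat U,\widehat V$ have tight intersection) — or else $\partial D$ is a meridian of the solid torus $W$, in which case cutting $W$ along $D$ and reassembling the branched-surface picture exhibits a closed surface carried by $B_1$, hence (by Lemma~\ref{Lnoclosed}) the Heegaard surface $S$, contradicting that $B_1$ is obtained by deleting $s_0$ (so $B_1$ cannot carry $S$, exactly as in the proofs of Lemma~\ref{Llam} and Lemma~\ref{Lalldisks}).

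The heart of the argument — and the main obstacle — is controlling the geometry of $\partial D$ on the annulus $l$: one must rule out the case where $\partial D$ wraps around $l$ in a way that is essential in $l$ but bounds a disk in the solid torus $W$ only after wrapping, i.e. one must show the "twisting" of the disk of contact is incompatible with the branch directions recorded at the single arcs $\beta$ and $\tau$. Concretely, I expect to set up coordinates on the bigon region: cut the pair of pants containing $s_0$ and $s_1$ (cf. the band-sum discussion and Lemma~\ref{Lbandsum}) and track how $\partial D$ can be isotoped rel the cusp points, using that the branch direction along $\beta$ is induced by the normal direction of $U_i$ (pointing in the $g_i$ direction) and along $\tau$ by the normal direction of $V_j$. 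Because the two corner points come from the two distinct quadrilaterals $s_0,s_1$ whose associated group elements are \emph{both trivial}, Lemma~\ref{Lsource} forces all branch directions at the boundary edges of $s_0$ and $s_1$ to point outward; this rigidity on cusp directions is what should obstruct the twisted configuration. I would then argue that the only consistent way for $D$ to close up is the untwisted one, reducing to the two cases in the previous paragraph, each of which yields a contradiction. The subtle part, which is why the paper defers it to its own section, is the careful bookkeeping of how many times $\partial D$ meets $\beta$ and $\tau$ and with which orientations; I would handle it by an innermost-disk / minimal-intersection argument on $D\cap(\text{I-fibers over }\beta\cup\tau)$, reducing to the model "simplest twisted disk of contact" of Figure~\ref{Ftwist2}(b) and then invoking the triviality of the associated loop to contradict positivity, exactly as in the twisted-disk-of-contact argument inside the proof of Lemma~\ref{Llam}.
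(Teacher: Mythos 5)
Your proposal contains a basic confusion that derails the whole approach, and it misses the mechanism the paper actually uses to get a contradiction.

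First, the disk of contact $D$ lives in $N(B_1)$, not in the complementary solid torus $W=M\setminus\Int(N(B_1))$. You cannot ``push $D$ slightly off $N(B_1)$'' to make it a properly embedded disk in $W$: $D$ is a union of pieces running through $S\times I$, $U_i\times I$ and $V_j\times I$, and its projection to $B_1$ is generally a complicated immersed disk passing through many branch sectors. Everything that follows from treating $D$ as a disk in $W$ (the dichotomy ``$\partial D$ inessential in $\partial W$ vs.~$\partial D$ a meridian of $W$,'' cutting $W$ along $D$, etc.) does not make sense. Even the milder claim that $\partial D$ must bound a disk in $W$ is unsupported; and if one did have such a disk $D'$, the resulting sphere $D\cup D'$ bounding a ball does not obviously localize to a bigon in $S$ between $u_i$ and $v_j$, because $D$ may pass through $U$- and $V$-disks many times.

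Second, the positivity/twisted-disk-of-contact argument that you invoke at the end is the wrong tool. That argument in Lemma~\ref{Llam} kills \emph{twisted} disks of contact because the boundary loop is a product of associated positive elements; an ordinary disk of contact bounded by a branch-locus circle is a different object, and the boundary circle $\pi(l)$ is not a product of associated branch-sector elements in the same way, so no contradiction with positivity arises. The fact that $\pi(l)$ is null-homotopic in $M$ (because $\partial D$ bounds $D$) is indeed a strong constraint, but by itself it contradicts nothing.

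What is actually missing is the entire content of the paper's Section~5: one must fix a Heegaard diagram of minimal complexity in the sense of Definition~\ref{Dmin}, analyze $D\cap(\Gamma_1\cup\Gamma_2)$ using the shadow-arc machinery (Lemmas~\ref{Lshadow}--\ref{Lparallel}), systematically $\partial$-compress $D\cap(S\times I)$ to eliminate $U_1$-disks and reduce to a single $U_2$-disk, and then show that the resulting disk $D_\eta'$ would give a new complete set of meridians of $\mathcal{V}$ with strictly smaller complexity than the original Heegaard diagram. None of that machinery --- in particular neither the band-sum/Lemma~\ref{Lbandsum} mechanism for preserving the no-wave property under the $\partial$-compressions, nor the counting argument in Lemma~\ref{Lminimality} and Lemma~\ref{Lalmost} that produces the complexity contradiction --- is present or foreshadowed in your sketch. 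The ``no-wave'' hypothesis alone (which you do mention) is not sufficient; the proof genuinely needs the full lexicographic minimality of Definition~\ref{Dmin}.
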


Suppose the Heegaard diagram formed by $\{u_1,u_2\}$ and $\{v_1, v_2\}$ is minimal as in Definition~\ref{Dmin}.

If a component of the branch locus contains 2 corner points, then it consists of two arcs, one arc from $u_1\cup u_2$ and the other arc from $v_1\cup v_2$. 
As in the proof of Lemma~\ref{Latmostk}, since the 4 edges of each $s_i$ belong to distinct curves (see Lemma~\ref{Lsourcequad}), if a component of the branch locus of $B_1$ contains 2 corner points, then one corner point is in $\partial s_0$ and the other corner point is in $\partial s_1$.

Suppose $B_1$ admits such a disk of contact $D\subset N(B_1)$.  By the construction of $B_1$, we may view $N(B_1)\subset N(B)$ and view $D$ as a bigon disk in $N(B)$ transverse to the $I$-fibers. 

We may suppose that $N(B)$ is obtained by first taking a product $S\times I$, where $S$ is the Heegaard surface, and then attaching  $N(U_1)$ and $N(U_2)$ to $S\times\{1\}$ and attaching  $N(V_1)$ and $N(V_2)$ to $S\times\{0\}$, where $N(U_i)$ and $N(V_i)$ are product neighborhoods of $U_i$ and $V_i$ respectively. Consider the 4 vertical annuli $u_1\times I$, $u_2\times I$, $v_1\times I$, and $v_2\times I$ in $S\times I$.  Since the disk $D$ is a bigon, without loss of generality, we may suppose $\partial D$ consists of two arcs $\mathfrak{u}$ and $\mathfrak{v}$, where $\mathfrak{u}$ is a subarc of $u_1\times\{1\}$ and $\mathfrak{v}$ is an arc properly embedded in the annulus $v_1\times I$, see Figure~\ref{Fproduct}(a, b). The arrows in  Figure~\ref{Fproduct}(a) denote the transverse orientation of $B$.

\begin{figure}[h]
	\begin{overpic}[width=5in]{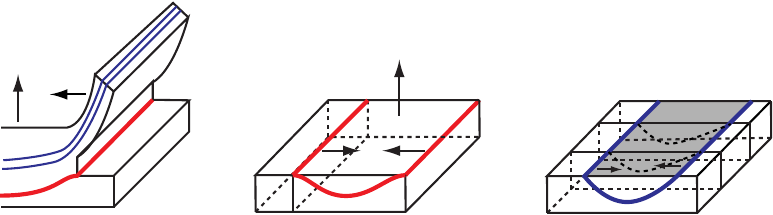}
		\put(11,-5){(a)}
		\put(45,-5){(b)}
		\put(82,-5){(c)}
		\put(17,10){$\mathfrak{u}$}
		\put(7, 1.5){$\mathfrak{v}$}
		\put(43,12){$\mathfrak{u}$}
		\put(49, 1.5){$\mathfrak{v}$}
		\put(56,11){$\mathfrak{u}$}
	\end{overpic}
	\vskip 0.5cm
	\caption{}\label{Fproduct}
\end{figure}

Since the intersection of $u_1\cup u_2$ and $v_1\cup v_2$ is tight, $u_i$ and $v_j$ do not form a bigon in $S$. Hence the disk $D$ cannot be totally in $S\times I$ and $D$ must pass through at least one of the 4 product neighborhoods $N(U_1)$, $N(U_2)$, $N(V_1)$, and $N(V_2)$.  We call each disk of $D\cap(N(U_1)\cup N(U_2))$ a $U$-disk and each disk in $D\cap(N(V_1)\cup N(V_2))$ a $V$-disk.

\begin{lemma}\label{LbothUV}
	$D$ contains at least one $U$-disk and one $V$-disk.
\end{lemma}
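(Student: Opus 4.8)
The plan is to suppose, for contradiction, that $D$ contains only $U$-disks (the case of only $V$-disks being symmetric under swapping the roles of $\mathcal{U}$ and $\mathcal{V}$) and derive a contradiction with the tightness of the intersection of $u_1\cup u_2$ and $v_1\cup v_2$, or with the no-wave/minimality hypothesis on the Heegaard diagram. Recall the boundary $\partial D=\mathfrak{u}\cup\mathfrak{v}$ with $\mathfrak{u}\subset u_1\times\{1\}$ and $\mathfrak{v}$ properly embedded in the annulus $v_1\times I$. Since $D$ is transverse to the $I$-fibers and lies in $N(B)=S\times I$ with the handle neighborhoods attached, a $U$-disk of $D$ is a component of $D\cap(N(U_1)\cup N(U_2))$; by hypothesis these are the only pieces of $D$ outside $S\times I$, and the remaining pieces $D\cap(S\times I)$ are horizontal disks in the product.

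First I would analyze what it means for $D$ to have no $V$-disk. The arc $\mathfrak{v}\subset v_1\times I$ is the only place $\partial D$ can "use" the $V$-side, but since $D\cap N(V_1)=D\cap N(V_2)=\emptyset$, the surface $D$ stays on the $S\times\{0\}$ side of the $v_j\times I$ annuli except along $\mathfrak{v}$ itself; in particular the projection $\pi(D)$ to $S$ meets $v_1\cup v_2$ only in an arc of $v_1$ coming from $\mathfrak{v}$. Now track the image $\pi(D)\subset S$: it is an immersed disk (since $D$ is transverse to fibers and carried by $B$, $\pi|_D$ is generically an immersion onto a subsurface of $S$) whose frontier consists of the arc $\mathfrak{u}\subset u_1$, the arc $\pi(\mathfrak{v})\subset v_1$, and arcs lying in $u_1\cup u_2$ coming from where $D$ passes through the $U$-disks. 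Cutting $D$ along its $U$-disks decomposes $D$ into horizontal product pieces in $S\times I$, each projecting to a subdisk of $S\setminus(u_1\cup u_2)$; gluing these up, one reads off a "chain" of subarcs of $u_1\cup u_2$ that $\pi(D)$ crosses. Because $D$ is a disk (simply connected) and the only $v$-curve involvement is the single arc on $v_1$, I can push the boundary $\mathfrak{v}$ across $\pi(D)$ to produce an embedded arc in $S$ with interior disjoint from $v_1\cup v_2$ and endpoints on $u_1\cup u_2$, hence a disk in the handlebody $\mathcal{U}$ realizing a compression; combined with the structure of $\pi(D)$ this will either exhibit a bigon between some $u_i$ and $v_1$ (contradicting tightness) or exhibit a wave/band-sum phenomenon reducing the complexity of the Heegaard diagram (contradicting minimality in the sense of Definition~\ref{Dmin}), or force $\pi(D)$ to be a disk bounded by a single $u_i$, contradicting that $u_1,u_2$ are essential non-separating curves.

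The key steps in order: (1) set up the product decomposition $N(B)=S\times I\cup N(U_1)\cup N(U_2)\cup N(V_1)\cup N(V_2)$ and observe that, with no $V$-disk, $D\setminus(\text{$U$-disks})$ is a disjoint union of horizontal product disks in $S\times I$; (2) show the projection $\pi\colon D\to S$ is a map whose image is a subsurface $\Sigma=\pi(D)$ with $\partial\Sigma\subset u_1\cup u_2\cup v_1$ and with $\Sigma\cap(v_1\cup v_2)$ equal to a single arc of $v_1$; (3) use that $\partial D$ is a single bigon loop to conclude that $\Sigma$, suitably cut and surgered, yields an essential arc or compressing disk in $\mathcal{U}$ disjoint (in its interior) from $v_1\cup v_2$; (4) run the case analysis — a $u_i$–$v_1$ bigon contradicts tightness; an arc of $v_1$ entering and leaving $\Sigma$ "from the same side" of some $u_i$ gives a wave with respect to $\{u_1,u_2\}$, contradicting the no-wave hypothesis (equivalently, the minimality of the complexity $c(\widehat U,\widehat V)$); and the degenerate case where $\Sigma$ sees only one of $u_1,u_2$ contradicts non-separability of that curve or the structure of a minimal Heegaard diagram from Ochiai's theorem (Figure~\ref{FHD}).

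The main obstacle I expect is step (3): carefully extracting geometric information (a genuine compressing disk or wave in the Heegaard surface) from the combinatorial data of how the immersed disk $\pi(D)$ traverses the arcs of $u_1\cup u_2$, while controlling the fact that $\pi|_D$ need not be an embedding — different product pieces of $D$ may overlap under projection, and the single arc $\mathfrak v$ of $v_1$ must be threaded through this potentially complicated picture. Handling the self-overlaps (an innermost-arc / innermost-disk argument on $D$ itself, rather than on its image) is the delicate point; everything downstream is then a finite case-check against tightness and the minimality/no-wave hypothesis established in Section~\ref{Ssetup}. Once Lemma~\ref{LbothUV} is in hand, the subsequent arguments for Proposition~\ref{Pbigon} can use the presence of both a $U$-disk and a $V$-disk to set up the eventual contradiction with $M\setminus B_1$ being a solid torus and the triviality of the associated group elements $\gamma_0,\gamma_1$.
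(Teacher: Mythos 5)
Your approach is fundamentally different from the paper's, and it has a genuine gap.

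The paper's proof of Lemma~\ref{LbothUV} is a short algebraic argument: if $D$ has no $U$-disk, then $D$ lies entirely in the handlebody $\mathcal{V}$ (after isotoping $\mathfrak{v}$ into the ball $\mathcal{V}\setminus(V_1\cup V_2)$). The intersections of the arc $\mathfrak{u}\subset u_1$ with $V_1$ and $V_2$ spell a word $w$ in $\pi_1(\mathcal{V})$, and since $\partial D$ bounds in $\mathcal{V}$, $w=1$. Since $\pi_1(\mathcal{V})$ is free of rank two, the word must have two adjacent cancelling letters, which is exactly a subarc of $u_1$ forming a wave with respect to $\{v_1,v_2\}$ — contradicting the no-wave hypothesis. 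The symmetric argument (no $V$-disk $\Rightarrow$ $D\subset\mathcal{U}$, read off a word from $\mathfrak{v}\cap(U_1\cup U_2)$) handles the other case. This neatly bypasses any analysis of how $D$ projects to $S$.

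Your projection argument has a concrete error at its foundation. You assert that with no $V$-disk ``the projection $\pi(D)$ to $S$ meets $v_1\cup v_2$ only in an arc of $v_1$ coming from $\mathfrak{v}$.'' This is false: even with $D\cap(N(V_1)\cup N(V_2))=\emptyset$, the disk $D$ can intersect the vertical annuli $\Gamma_i=v_i\times I\subset S\times I$ in arbitrarily many $\partial$-parallel arcs (these are precisely the arcs of $D\cap\Gamma_i$ that the paper analyzes at length in Section 6, e.g.~Definition~\ref{Dshadow} and Lemmas~\ref{Lshadow}--\ref{Lparallel}). Each such arc contributes to $\pi(D)\cap(v_1\cup v_2)$, so the ``subsurface $\Sigma$'' you want to analyze has a far more complicated frontier than just $\mathfrak{u}\cup\pi(\mathfrak{v})\cup(\text{arcs in }u_1\cup u_2)$. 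Consequently the case analysis in your step (4) does not run as you sketch it. You do flag the self-overlap/innermost-disk issue as the ``main obstacle,'' and indeed that is where your proposal stops short: you have not supplied the innermost-arc argument, and without the corrected picture of $D\cap(\Gamma_1\cup\Gamma_2)$ it is not clear the proposed route closes. In short, the idea of deriving a wave or a complexity reduction is the right target, but the route through $\pi(D)$ is both mistaken in its starting description and unfinished; the paper's switch to the fundamental-group word in a free group is exactly what makes the argument short and avoids these geometric complications entirely.
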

\begin{proof}
Suppose $D$ does not have any $U$-disk. Then $D$ can be viewed as a disk in the handlebody $\mathcal{V}$. We may isotope the arc $\mathfrak{v}$ of $\partial D$ into the 3-ball $\mathcal{V}\setminus(V_1\cup V_2)$ and isotope the arc $\mathfrak{u}$ of $\partial D$ to be transverse to the disks $V_1$ and $V_2$. 
Consider the group presentation of $\pi_1(\mathcal{V})$ using the two generators dual to the disks $V_1$ and $V_2$. 
The intersection points of the arc $\mathfrak{u}$ with the disks $V_1$ and $V_2$ give rise to a word $w$ in $\pi_1(\mathcal{V})$ which is the group element represented by $\partial D$. 
Since $D$ lies totally in the handlebody $\mathcal{V}$, $w$ is the trivial element in $\pi_1(\mathcal{V})$. 
Since $\pi_1(\mathcal{V})$ is a free group, there must be two adjacent letters in the word $w$ that cancel each other.  
As $\mathfrak{u}$ is a subarc of $u_1$, 
this implies that a subarc of $\mathfrak{u}$ is a wave connecting the same side of a disk $V_i$ ($i=1$ or $2$). This contradicts our assumption on the Heegaard diagram in section~\ref{Ssetup}, in particular, the assumption that the Heegaard diagram has no wave.

Hence, $D$ must contain a $U$-disk. Symmetrically, the same argument implies that $D$ also contains a $V$-disk.
\end{proof}

Let $\Gamma_1$ and $\Gamma_2$ denote the annuli $v_1\times I$ and $v_2\times I$ respectively.  
In our proof of Proposition~\ref{Pbigon}, we will focus on $D\cap \Gamma_1$ and $D\cap \Gamma_2$.  We can perform a small isotopy  pushing the $V$-disks in $D$ away from $\Gamma_i$, see Figure~\ref{Fvdisk} for a local picture of $\Gamma_i$ and $D$ after this isotopy. So, we may assume that $D\cap \Gamma_1$ and $D\cap \Gamma_2$ consist of $\partial$-parallel arcs with endpoints in $v_1\times\{1\}$ and $v_2\times\{1\}$ respectively, see Figure~\ref{Farcs} for a picture of $\Gamma_i\cap D$.  In particular, each arc in $D\cap \Gamma_i$ ($i=1,2$) cuts off a bigon disk from the annulus $\Gamma_i$.

\begin{figure}[h]
	\begin{overpic}[width=2in]{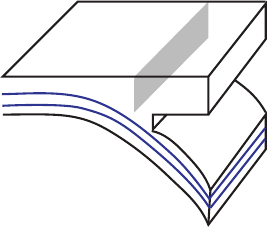}
		\put(61, 61){$\Gamma_i$}
		\put(30, 68){$S\times I$}
        \put(68, 32){$N(V_j)$}
        \put(-6, 44){$D$}
	\end{overpic}
	\caption{$\Gamma_i$ in $N(B)$}\label{Fvdisk}
\end{figure}

\begin{definition}\label{Dshadow}
For each arc $\lambda$ of $D\cap\Gamma_i$, let $\lambda'$ be the arc in $v_i\times\{1\}$ with $\partial\lambda'=\partial\lambda$ and such that $\lambda\cup\lambda'$ bounds a bigon disk in $\Gamma_i$. We call $\lambda'$ the \textit{shadow arc} of $\lambda$ and call the bigon subdisk of $\Gamma_i$ bounded by $\lambda\cup\lambda'$ the \textit{shadow bigon disk} of $\lambda$. 
Furthermore, if there a rectangle $R=I\times I$ in $S\times\{1\}$ such that (1) $\lambda'=\{x\}\times I\subset R=I\times I$ for some $x\in I$, (2) $I\times\partial I\subset D\cap(S\times \{1\})$, and (3) $\partial I\times I\subset (v_1\cup v_2)\times \{1\}\subset \partial\Gamma_1\cup\partial\Gamma_2$, then we call $R$ a \textit{shadow rectangle} for $\lambda$, see the shaded region in Figure~\ref{Fproduct}(c) for a picture.   
Note that we may assume that $R\cap \Gamma_i$ consists of arcs in the form of $\{x\}\times I$ in $R$ and $R\cap D$ consists of arcs in the form of $I\times\{x\}$, $x\in I$.
\end{definition}

Since $D\subset N(B)$ and $D$ is transverse to the $I$-fibers, $D$ 
has a normal direction compatible with 
the transverse orientation of $B$. 
In particular, the normal directions of all the 
$U$-disks are compatible with the transverse orientation of $B$. This normal direction induces a normal direction for each curve of $D\cap (S\times\{1\})$.  In particular, for each arc $\alpha$ in $D\cap \Gamma_i$ ($i=1,2$), the induced directions at both of its endpoints point into the shadow arc of $\alpha$, see the arrows in Figure~\ref{Fproduct}(b, c).

\begin{lemma}\label{Lshadow}
Let $\alpha'$ be a subarc of $(v_1\cup v_2)\times\{1\}$ with $\partial\alpha'\subset D$. If $\Int(\alpha')\cap D=\emptyset$ and the induced normal directions at both endpoints of $\alpha'$ point into $\alpha'$, then $\alpha'$ must be the shadow arc of an arc of $D\cap (\Gamma_1\cup\Gamma_2)$.
\end{lemma}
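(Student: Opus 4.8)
The plan is to analyze the combinatorial structure of the arc $\alpha'\subset (v_1\cup v_2)\times\{1\}$ together with the disk $D$ near $\alpha'$, and argue that the only way the induced normal directions at $\partial\alpha'$ can both point inward is if $\alpha'$ arises as a shadow arc. First I would set up notation: $\alpha'$ has its two endpoints on two (not necessarily distinct) arcs $\beta_1,\beta_2$ of $D\cap(S\times\{1\})$; since $\Int(\alpha')\cap D=\emptyset$, the arc $\alpha'$ lies in a single complementary region of $D\cap(S\times\{1\})$ in $S\times\{1\}$ and cuts off, together with subarcs of $\beta_1,\beta_2$ (and possibly other arcs of $D\cap(S\times\{1\})$), a subregion. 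I would track the transverse orientation: the normal direction of $D$ is compatible with the transverse orientation of $B$, and along $\alpha'\subset v_i\times\{1\}$ the annulus $\Gamma_i=v_i\times I$ has the disks $V_i$ attached on the $\mathcal V$-side, so the $I$-fiber direction is pinned down. The hypothesis that both endpoint-directions point \emph{into} $\alpha'$ is exactly the configuration in Figure~\ref{Fproduct}(b,c) that an arc of $D\cap\Gamma_i$ produces at its shadow arc.

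The key steps, in order: (1) Show that near $\alpha'$, the disk $D$ must have a subdisk $D_0$ whose boundary consists of $\alpha'$ together with an arc $\lambda$ running through the annulus $\Gamma_i$ (equivalently, through the product neighborhood $N(V_i)$). The reason is that the two arcs of $D\cap(S\times\{1\})$ emanating from $\partial\alpha'$, with their inward-pointing normals, force $D$ to ``turn back'' across the vertical annulus $\Gamma_i$ — if instead $D$ continued into $S\times I$ away from $v_i\times\{1\}$, the induced normal direction at the endpoints would be forced to point \emph{out of} $\alpha'$ on at least one side, since $\alpha'\subset v_i\times\{1\}=\partial(v_i\times I)$ and $\Gamma_i$ lies on only one side of $\alpha'$ in $S\times\{1\}$. (2) Having produced the arc $\lambda\subset D\cap\Gamma_i$, verify $\partial\lambda=\partial\alpha'$ and that $\lambda\cup\alpha'$ bounds a bigon in $\Gamma_i$; this is immediate from the already-established fact (stated just before Definition~\ref{Dshadow}) that every arc of $D\cap\Gamma_i$ is $\partial$-parallel in $\Gamma_i$ with both endpoints in $v_i\times\{1\}$. (3) Conclude $\alpha'$ is the shadow arc of $\lambda$ by Definition~\ref{Dshadow}.

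The main obstacle I anticipate is step (1): ruling out the ``false'' configuration in which $\alpha'$ has inward-pointing normals at both ends but the two arcs of $D$ at $\partial\alpha'$ do \emph{not} both continue into $\Gamma_i$ — for instance, one goes into $\Gamma_i$ and the other stays in $S\times\{1\}$, or the region of $S\times\{1\}$ on the $\Gamma_i$-side of $\alpha'$ is more complicated than a bigon. The resolution should be a careful local orientation count: at each endpoint $p\in\partial\alpha'$, the arc of $D\cap(S\times\{1\})$ through $p$ separates a neighborhood of $p$ in $S\times\{1\}$ into a ``$D$-plus'' side and a ``$D$-minus'' side determined by the normal of $D$; the hypothesis says $\alpha'$ departs $p$ on a prescribed side. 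Since $\alpha'$ lies in $v_i\times\{1\}$, and $\Gamma_i$ is glued to $S\times I$ along $v_i\times I$ with a fixed coorientation coming from $V_i$, I can match the $D$-normal on $\alpha'$ with the $D$-normal on the bigon in $\Gamma_i$ cut off by the arc of $D\cap\Gamma_i$ at $p$ (such an arc exists since $D$ is transverse to $I$-fibers and $p\in\partial\Gamma_i\cap D$). Pushing this matching along $\alpha'$ from one endpoint to the other, the two local arcs of $D\cap\Gamma_i$ at the two endpoints of $\alpha'$ must be the two ends of a single arc $\lambda$ of $D\cap\Gamma_i$, because $D$ is an embedded disk and $\lambda$'s shadow bigon in $\Gamma_i$ is precisely the region cut off; any other possibility would create an extra boundary arc of $D$ inside $\Gamma_i$ meeting $\Int(\alpha')$, contradicting $\Int(\alpha')\cap D=\emptyset$. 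Once step (1) is in place, steps (2) and (3) are routine.
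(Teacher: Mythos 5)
Your argument arrives at the same essential insight as the paper's proof: each endpoint of $\alpha'$ is an endpoint of an arc of $D\cap(\Gamma_1\cup\Gamma_2)$, and if the two endpoints belonged to different such arcs then $\Int(\alpha')$ would have to contain a point of $D$, contradicting $\Int(\alpha')\cap D=\emptyset$. However, the first half of your step (1) is an unnecessary and somewhat confused detour. You try to argue, from the inward-pointing normal directions, that ``$D$ must turn back across $\Gamma_i$'' near each endpoint of $\alpha'$; but no such argument is needed. Since $\partial\alpha'\subset D$ and $\alpha'\subset v_i\times\{1\}\subset\partial\Gamma_i$, and $D$ is transverse to the $I$-fibers (hence transverse to $\Gamma_i$), the intersection $D\cap\Gamma_i$ is a properly embedded $1$-manifold in $\Gamma_i$ whose boundary is $D\cap\partial\Gamma_i$. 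So each point of $\partial\alpha'$ is automatically an endpoint of an arc of $D\cap\Gamma_i$ — no orientation count required. Your worry that ``one goes into $\Gamma_i$ and the other stays in $S\times\{1\}$'' cannot occur for the arcs of $D\cap\Gamma_i$, and the statement ``$\Gamma_i$ lies on only one side of $\alpha'$ in $S\times\{1\}$'' does not parse since $\Gamma_i$ is not a subset of $S\times\{1\}$.

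The normal directions are needed only for the final step, which your ``main obstacle'' paragraph gets to but leaves hand-wavy. The arcs of $D\cap\Gamma_i$ are disjoint $\partial$-parallel arcs in an annulus with all endpoints on one boundary circle, hence they are nested. The shadow arc of the arc at one endpoint $p_1$ of $\alpha'$ starts at $p_1$ and, because the induced normal direction at $p_1$ points into $\alpha'$, extends in the direction of $\alpha'$; similarly at the other endpoint $p_2$. Nesting then forces the terminal endpoint of one of these shadow arcs to lie in $\Int(\alpha')$ unless the two arcs coincide — and that terminal endpoint is a point of $D$, contradiction. Spelling this out would make your argument airtight and would also make it clear that the conclusion is exactly the paper's one-line contradiction.
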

\begin{proof}
As $\partial\alpha'\subset D$, each point of $\partial\alpha'$ is an endpoint of a component of $D\cap (\Gamma_1\cup\Gamma_2)$.  
The lemma is equivalent to saying that the two endpoints of $\alpha'$ belong to the same component of $D\cap (\Gamma_1\cup\Gamma_2)$.  

Suppose on the contrary that the two endpoints of $\alpha'$ belong to different arcs $\alpha_1$ and $\alpha_2$ of $D\cap (\Gamma_1\cup\Gamma_2)$.  Since the induced normal directions at both endpoints of $\alpha'$ point into $\alpha'$ and since $D$ is transverse to the $I$-fibers, if $\alpha_1\ne\alpha_2$, then  $\Int(\alpha')$ must contain an endpoint of some $\alpha_i$ ($i=1,2$). This contradicts the hypothesis that $\Int(\alpha')\cap D=\emptyset$.
\end{proof}

We call a $U$-disk a $U_i$-disk ($i=1,2$) if the $U$-disk is parallel to the disk $U_i$, i.e.~lying in $U_i\times I$. 
The normal direction gives an induced order on all the $U_i$-disks as follows: we order all the $U_i$-disks as $\Delta_1,\dots,\Delta_m$ so that the induced normal directions point from $\Delta_{j+1}$ to $\Delta_j$ for all $j$, see Figure~\ref{Farcs}, where the shaded region denotes $\Gamma_i$ and the red dot denotes $\mathfrak{u}$. Moreover, we say $\Delta_j$ is in front of $\Delta_{j+1}$ and  $\Delta_{j+1}$ is behind $\Delta_j$.  
We call the disk $\Delta_1$ the \textit{front $U_i$-disk} and call $\partial\Delta_1$ the \textit{front $U_i$-curve}.

\begin{figure}[h]
		\vskip 0.3cm
	\begin{overpic}[width=3in]{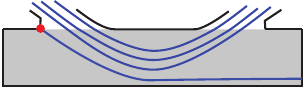}
		\put(15.5, 29){$\Delta_1$}
		\put(4, 29){$\Delta_m$}
		\put(10, 15){$\mathfrak{u}$}
		\put(3, 8){$\Gamma_i$}
		\put(48, 13){$\alpha_1$}
		\put(63, 6.5){$\alpha_m$}
	\end{overpic}
	\caption{}\label{Farcs}
\end{figure}

Recall that $\partial D=\mathfrak{u}\cup \mathfrak{v}$ and  $\mathfrak{u}\subset u_1\times\{1\}$. 
Let $\mathfrak{u}_0=(u_1\times\{1\})\setminus\Int(\mathfrak{u})$
be the complementary arc of $\mathfrak{u}$ in $u_1\times\{1\}$.  
Denote $\mathfrak{u}\cup \mathfrak{u}_0$ by $\delta$. So, at the initial stage, $\delta= u_1\times\{1\}$.

As shown in Figure~\ref{Fproduct}(a) and Figure~\ref{Farcs}, the arc $\mathfrak{u}$ is behind all the $U_1$-disks. 
If $D$ contains a $U_1$-disk,  then $\partial\Delta_1$ (described above) is the front $U_1$-curve; if $D$ has no $U_1$-disk, then we call the curve $\delta=\mathfrak{u}\cup \mathfrak{u}_0$ the front $U_1$-curve.
Thus, there is always a front $U_1$-curve, but if $D$ does not contain a $U_2$-disk, then there is no front $U_2$-curve.

\begin{figure}[h]
	\vskip 0.2cm
	\begin{overpic}[width=5in]{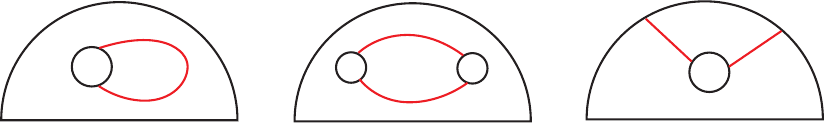}
		\put(12,-5){(a)}
		\put(49,-5){(b)}
		\put(84,-5){(c)}
		\put(15,10.5){$\alpha$}
		\put(9.2,6){$\Delta_s$}
		\put(16,5){$P_s$}
		\put(48,6){$D_\xi$}
		\put(84.5,10.5){$D_\xi$}
	\end{overpic}
	\vskip 0.5cm
	\caption{}\label{FDarcs}
\end{figure}

\begin{lemma}\label{LinnerU}
Let $\alpha$ be an arc in $D\cap\Gamma_i$ and let $\alpha'$ be a subarc of $v_i\times\{1\}$ with $\partial\alpha'=\partial\alpha$. Suppose both endpoints of $\alpha$ lie in the boundary of the same $U$-disk $\Delta_s$ and suppose $\Int(\alpha')\cap\Delta_s=\emptyset$.  Let $P_s$ be the subdisk of $D\setminus\Int(\Delta_s)$ bounded by $\alpha$ and a subarc of $\partial\Delta_s$, see Figure~\ref{FDarcs}(a). Then $P_s$ must contain a $U$-disk.
\end{lemma}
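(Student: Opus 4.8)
The plan is to argue by contradiction, imitating the proof of Lemma~\ref{LbothUV}: a compact disk embedded in the handlebody $\mathcal{V}$ represents the trivial element of the free group $\pi_1(\mathcal{V})$, and a cancellation in the corresponding free word forces a wave, contradicting the no-wave assumption from section~\ref{Ssetup}.

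So suppose $P_s$ contains no $U$-disk. Then $P_s$ is disjoint from $N(U_1)\cup N(U_2)$, so after a small isotopy $P_s$ lies in the handlebody $\mathcal{V}$ (more precisely in $\mathcal{V}$ together with a collar of $S$, which does not affect $\pi_1$). Write $\partial P_s=\alpha\cup\beta$, where $\beta$ is the subarc of $\partial\Delta_s$. Since $\Delta_s$ is a $U_j$-disk for some $j\in\{1,2\}$, its boundary $\partial\Delta_s$ is a parallel copy of $u_j=\partial U_j$ pushed off $S$; hence $\beta$ is (isotopic to) a subarc of $u_j$ whose two endpoints, which coincide with $\partial\alpha$, lie at points of $u_j\cap v_i$. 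Using the shadow bigon disk of $\alpha$ in $\Gamma_i$ (Definition~\ref{Dshadow}), we may isotope $\alpha$ rel $\partial\alpha$ inside $\mathcal{V}$ to its shadow arc $\alpha'\subset v_i\times\{1\}$; the hypothesis $\Int(\alpha')\cap\Delta_s=\emptyset$ ensures this isotopy meets $\Delta_s$ only in $\partial\alpha$, so $P_s$ still provides a disk in $\mathcal{V}$ with boundary $\beta\cup\alpha'$.

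Since $\{V_1,V_2\}$ is a complete meridian system of $\mathcal{V}$, the group $\pi_1(\mathcal{V})$ is free on the two generators dual to $V_1,V_2$, and the class of a loop is read off from its intersections with $V_1\cup V_2$. The arc $\alpha'$ is a subarc of $v_i$ and, after a small perturbation, is disjoint from $v_1\cup v_2$, so it contributes nothing; hence the word $w$ represented by $\partial P_s$ is determined by the intersections of $\beta$ (equivalently its projection, a subarc of $u_j$) with $v_1\cup v_2$, together with the two ``corner'' crossings of $v_i$ at $\partial\alpha$. Because $P_s$ is an embedded disk in $\mathcal{V}$, we have $w=1$ in this free group, so $w$ reduces to the empty word. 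If the reduction involves an actual cancellation, an innermost cancelled pair yields a subarc of $\partial P_s$ whose interior, once pushed off $u_1\cup u_2\cup v_1\cup v_2$, is disjoint from $\widehat{U}\cup\widehat{V}$ and meets a single $v_l$ only in its two endpoints, approaching $v_l$ from the same side; since the intersection of $\widehat{U}$ and $\widehat{V}$ is tight this arc cannot cut off a bigon, so it is a wave with respect to $v_l$, contradicting the no-wave assumption. If instead $w$ is already empty, then $\beta$ itself is such an arc (its interior, being a subarc of $u_j$, is disjoint from $\widehat{U}\cup\widehat{V}$), so $\beta\cup\alpha'$ either bounds a bigon in $S$ — impossible by tightness — or is isotopic to $v_i$ in $S$; in the latter case one isotopes $v_i$ across the corresponding annulus and uses $\Int(\alpha')\cap\Delta_s=\emptyset$ to strictly reduce $|v_i\cap u_j|$, contradicting the minimality of the Heegaard diagram (Definition~\ref{Dmin}). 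In all cases we reach a contradiction, so $P_s$ must contain a $U$-disk.

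The technical heart of the argument is the last paragraph: one must make sure that the boundary arc $\alpha$, which is not a subarc of a meridian, is correctly absorbed (this is the role of the shadow bigon and of the hypothesis $\Int(\alpha')\cap\Delta_s=\emptyset$), and that the degenerate situation in which the boundary word carries no cancellation is ruled out by tightness together with the minimality of the diagram rather than by the no-wave condition alone.
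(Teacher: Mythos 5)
Your proposal takes a genuinely different route from the paper. Where you run a free-group cancellation argument in $\pi_1(\mathcal{V})$ (imitating Lemma~\ref{LbothUV}), the paper argues directly via strong irreducibility of the Heegaard splitting: after pushing $\alpha$ to $\alpha'$, the curve $\partial P_s'=\kappa\cup\alpha'$ is embedded in $S\times\{1\}$; if it is inessential one gets a bigon between $\kappa\subset\partial\Delta_s$ and $\alpha'\subset v_i$, contradicting tightness, and if it is essential then $P_s'$ is an essential disk in $\mathcal{V}$ that, by $\Int(\alpha')\cap\Delta_s=\emptyset$, can be isotoped off $\partial\Delta_s$, so the pair $(\Delta_s, P_s')$ gives a weak reduction, contradicting strong irreducibility. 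That argument is shorter and in particular has no ``empty word'' case to handle.

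There are two concrete problems with your degenerate case. First, the dichotomy ``$\beta\cup\alpha'$ either bounds a bigon in $S$ or is isotopic to $v_i$'' is not justified: after pushing off $v_i$, the curve $\beta\cup\alpha'$ lives in the four-holed sphere $S\setminus(v_1\cup v_2)$, where there are essential simple closed curves that are not boundary-parallel (e.g.\ curves separating $\{v_1^+,v_1^-\}$ from $\{v_2^+,v_2^-\}$), so there is a third alternative your argument does not address. Second, your escape route for the ``isotopic to $v_i$'' subcase invokes minimality of the Heegaard diagram (Definition~\ref{Dmin}). This is fatal for how the lemma is actually used: as the paper explicitly remarks before Step~2, Lemmas~\ref{Lshadow}--\ref{Lparallel} must continue to hold for the curves $\delta'$, $\partial\Delta_j'$, $\partial E_i'$ produced by the band-sum operations, and those curve systems are no longer minimal --- only the no-wave property and strong irreducibility are preserved by the band sums (Lemma~\ref{Lbandsum}). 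A proof of Lemma~\ref{LinnerU} that uses minimality would therefore not transfer to the settings in which the lemma is re-applied, e.g.\ in Case~(ii) of the proof of Lemma~\ref{Lalmost}. The paper's strong-irreducibility argument avoids this dependence entirely, which is precisely why it is the right tool here.
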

\begin{proof}
Suppose on the contrary that $P_s$ does not contain any $U$-disk.  

The boundary of $P_s$ is the union of $\alpha$ and a subarc  $\kappa$ of $\partial\Delta_s$. 
Recall that the annulus $\Gamma_i$ is a sub-surface of the meridional disk $V_i$. So, by pushing $\alpha$ onto $\alpha'$ across the subdisk of $V_i$ bounded by $\alpha\cup\alpha'$, we can isotope $P_s$ into a disk $P_s'$ with $\partial P_s'=\kappa\cup\alpha'$. 
Since $\Int(\alpha')\cap\Delta_s=\emptyset$, we have $\Int(\alpha')\cap\kappa=\emptyset$, which implies that $\partial P_s'$ is embedded in $S\times\{1\}$. 

If $\partial P_s'$ is a trivial curve in $S\times\{1\}$, then $\kappa$ and the arc $\alpha'$ form a bigon in $S\times\{1\}$, contradicting the assumption that the curves in the Heegaard diagram have tight intersection.  Thus $\partial P_s'$ must be essential in $S\times\{1\}$.  

Next we view $S=S\times\{1\}$ and view $\mathcal{U}$ and $\mathcal{V}$ as the two handlebodies bounded by $S\times\{1\}$.

Since $P_s$ does not contain any $U$-disk, we have $P_s'\subset\mathcal{V}$ after isotopy. 
As $\partial P_s'$ is essential in $S\times\{1\}$, $P_s'$ is an essential disk in $\mathcal{V}$.   
Since $\Int(\alpha')\cap\Delta_s=\emptyset$, after a small isotopy in $S\times\{1\}$, $\partial P_s'$ can be moved to be disjoint from $\partial\Delta_s$.  Since $\Delta_s$ is an essential disk in $\mathcal{U}$ and $P_s'$ is an essential disk in $\mathcal{V}$, this means that the Heegaard splitting is weakly reducible, 
contradicting the conclusion at the end of section~\ref{Ssetup} that the genus-2 Heegaard splitting is strongly irreducible.
\end{proof}

As in the proof of Lemma~\ref{LinnerU}, we view $S=S\times\{1\}$ and view $\mathcal{U}$ and $\mathcal{V}$ as the two handlebodies bounded by $S\times\{1\}$. To simplify notation, we sometimes do not distinguish between $v_i\times\{1\}$ and $v_i$.

\begin{lemma}\label{Lu0}
Suppose $D$ contains a $U_2$-disk. 
Let $\alpha$ be an arc in $D\cap\Gamma_i$ and suppose $\partial\alpha\subset\mathfrak{u}\subset\partial D$.  Let $\alpha'\subset v_i\times\{1\}\subset\partial\Gamma_i$ be an arc with $\partial\alpha'=\partial\alpha$. Then $D\cap\Int(\alpha')\ne\emptyset$. 
\end{lemma}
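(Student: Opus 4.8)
The plan is to argue by contradiction: assume $D\cap\Int(\alpha')=\emptyset$, so that $\alpha'$ is an innermost-type arc on $v_i\times\{1\}$ whose interior misses $D$. First I would record the normal-direction data. Since $\partial\alpha\subset\mathfrak{u}$ and $\mathfrak{u}$ lies behind every $U_1$-disk (Figure~\ref{Farcs}), and since the induced normal directions at the two endpoints of $\alpha$ point into the shadow arc of $\alpha$ (the remark preceding Lemma~\ref{Lshadow}), the two points of $\partial\alpha'=\partial\alpha$ have induced normal directions pointing into $\alpha'$. Then Lemma~\ref{Lshadow} forces $\alpha'$ to be the shadow arc of an arc of $D\cap(\Gamma_1\cup\Gamma_2)$; combined with $\partial\alpha'=\partial\alpha$ and the fact that $\alpha$ itself is such an arc, this should pin down that $\alpha'$ is the shadow arc of $\alpha$ and hence that the rectangle structure (a shadow rectangle, in the sense of Definition~\ref{Dshadow}) is available along $\alpha$.

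Next I would use the hypothesis that $D$ contains a $U_2$-disk to produce a contradiction with strong irreducibility, via Lemma~\ref{LinnerU}. The idea: $\mathfrak{u}$ is a subarc of $u_1\times\{1\}$, so $\mathfrak{u}$ — and therefore its endpoints, which are the endpoints of $\alpha'$ — lie on $\partial U_1$, not on $\partial U_2$; in particular $\Int(\alpha')$ misses $\partial\Delta$ for the front $U_2$-curve $\partial\Delta$ (after a small isotopy, since $\Delta$ is parallel to $U_2$ and $\alpha'\subset v_i$ meets $u_2$ only transversally, with $\partial\alpha'\subset u_1$). Cutting $D$ along $\alpha$ and pushing across the shadow bigon in $\Gamma_i$ (exactly as in the proof of Lemma~\ref{LinnerU}) turns the subdisk of $D$ cut off by $\alpha$ on the side not containing the $U_2$-disk into an embedded disk in $S\times\{1\}$ bounding an essential curve (it cannot be trivial, else $\alpha$ and a subarc of $u_1$ form a bigon, violating tightness of the Heegaard diagram). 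Depending on which side of $\alpha$ in $D$ contains the $U_2$-disk, this essential disk lies in $\mathcal{V}$ (if the $U$-disks are all on the other side) or can be used together with the $U_2$-disk to exhibit a weakly reducible pair; either way we contradict the strong irreducibility established at the end of Section~\ref{Ssetup}.

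The main obstacle I anticipate is the bookkeeping of \emph{which} side of $\alpha$ in $D$ contains the $U_2$-disk, and more generally controlling the possibility that $\alpha$ has one or both endpoints on the \emph{same} $U$-disk (the hypothesis of Lemma~\ref{LinnerU}) versus on distinct $U$-disks. If $\partial\alpha$ lies on distinct $U$-disks, I would iterate: pick an innermost such arc $\alpha$ of $D\cap(\Gamma_1\cup\Gamma_2)$ with $\partial\alpha\subset\mathfrak u$ — here "innermost" means cutting off a subdisk $P$ of $D$ whose only arcs of $D\cap(\Gamma_1\cup\Gamma_2)$ meeting $\mathfrak u$ are nested outside it — and argue that $P$ then contains the front $U_2$-disk in a controlled position, reducing to the Lemma~\ref{LinnerU} situation after finitely many steps. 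The delicate point throughout is verifying that the arcs $\alpha'$ produced keep satisfying $\Int(\alpha')\cap D=\emptyset$ and the normal-direction condition needed to invoke Lemma~\ref{Lshadow}, so that the shadow-rectangle machinery stays valid; this is where I expect the argument to require care rather than new ideas.
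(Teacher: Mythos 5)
Your proposal diverges from the paper's argument and, as sketched, has genuine gaps. The paper's proof is a short \emph{wave} argument: take a $U_2$-disk $E\subset D$, note that $\{\delta,\partial E\}$ (with $\delta=\mathfrak{u}\cup\mathfrak{u}_0$) is a complete set of meridians for $\mathcal U$, and observe that $\Int(\alpha')\cap D=\emptyset$ forces $\Int(\alpha')\cap\partial E=\emptyset$ and $\Int(\alpha')\cap\mathfrak{u}=\emptyset$, so all interior crossings of $\alpha'$ with $\delta$ lie on $\mathfrak{u}_0$. Since the induced normal directions at the two endpoints $\partial\alpha'=\partial\alpha$ are opposite along $v_i\times\{1\}$, somewhere along $\alpha'$ two adjacent crossings with $\delta$ have opposite normal directions, and the subarc between them (which misses $\partial E$) is a wave with respect to $\delta$, contradicting the no-wave property recorded in Remark~\ref{Rwave}. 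No compression or strong-irreducibility input is needed; the hypothesis that $D$ contains a $U_2$-disk enters exactly to supply $\partial E$ as the second meridian.

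Your route has several concrete problems. First, the claim that the induced normal directions at $\partial\alpha'$ ``point into $\alpha'$'' is only correct when $\alpha'$ is the shadow arc of $\alpha$; the lemma allows $\alpha'$ to be either of the two arcs in $v_i\times\{1\}$ with $\partial\alpha'=\partial\alpha$, and for the complementary arc they point out, which breaks your application of Lemma~\ref{Lshadow}. Second, after cutting $D$ along $\alpha$ and pushing across the shadow bigon, the resulting disk does not generally lie ``in $S\times\{1\}$'': the subdisk of $D$ cut off by $\alpha$ may contain $U_1$-disks, $U_2$-disks, and $V$-disks in various configurations, so the isotoped disk may sit in $\mathcal U$, in $\mathcal V$, or pass through both, and neither the bigon contradiction (needs no $U$- or $V$-disks) nor the weak reducibility contradiction (needs the disk to be essential in $\mathcal V$ \emph{and} disjoint from a compressing disk of $\mathcal U$) is available without further work. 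Third, Lemma~\ref{LinnerU} is stated for $\alpha$ with both endpoints on the \emph{same $U$-disk} and does not apply to your $\alpha$, whose endpoints lie on $\mathfrak{u}\subset\partial D$; the ``iterate to an innermost arc'' step is also not admissible, since the lemma fixes $\alpha$ rather than allowing you to trade it for a deeper arc. The paper sidesteps all of this by going through the no-wave property of the Heegaard diagram, which is the key input you should be using here.
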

\begin{proof}
Suppose on the contrary that $D\cap\Int(\alpha')=\emptyset$. 
Let $E$ be a $U_2$-disk in $D$. 
We may regard $\{\delta,\partial E\}$ as the complete set of meridians $\{u_1, u_2\}$ for $\mathcal{U}$, where $\delta=\mathfrak{u}\cup\mathfrak{u}_0$. 
Since $E\subset D$, $\mathfrak{u}\subset\partial D$ and $D\cap\Int(\alpha')=\emptyset$, we have $\Int(\alpha')\cap\partial E=\emptyset$ and $\Int(\alpha')\cap\mathfrak{u}=\emptyset$.  
Note that $\Int(\alpha')$ may still intersect the arc $\mathfrak{u}_0$, so it is possible that $\Int(\alpha')\cap\delta\ne\emptyset$.

Since $\alpha$ is transverse to the $I$-fibers, the induced normal directions at the endpoints of $\alpha$ are opposite along $v_i\times\{1\}$. 
Thus, no matter whether or not $\Int(\alpha')$ meets $\mathfrak{u}_0$, there is always a subarc of $\alpha'$ between two adjacent intersection points of $\alpha'\cap\delta$ and with opposite induced normal directions. Since $\Int(\alpha')\cap\partial E=\emptyset$, such a subarc of $\alpha'$ is a wave with respect to $\delta$, contradicting the property that no subarc of $v_i\times \{1\}$ is a wave with respect to $\{u_1, u_2\}$.
\end{proof}
\begin{remark}\label{Rwave}
	In the proof of Lemma~\ref{Lu0}, we only require that, in the Heegaard diagram, no subarc of $v_1$ and $v_2$ is a wave with respect to $\{u_1, u_2\}$.  Later, we will perform a band sum  and obtain a new set of meridians $\{u_1', u_2'\}$ preserving this property, that is, no subarc of $v_1$ and $v_2$ is a wave with respect to $\{u_1', u_2'\}$, so that we can repeatedly apply Lemma~\ref{Lu0}.
\end{remark}

\begin{lemma}\label{LU12}
Let $\alpha$ be an arc in $D\cap\Gamma_i$  that is outermost in the annulus $\Gamma_i$ ($i=1,2$). 
Then one endpoint of $\alpha$ lies in the front $U_2$-curve and the other endpoint of $\alpha$ lies in the front $U_1$-curve.  In particular, $D$ must contain at least one $U_2$-disk.
\end{lemma}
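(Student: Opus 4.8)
The plan is to analyze the outermost arc $\alpha$ in $\Gamma_i$ together with its shadow arc $\alpha'$ in $v_i\times\{1\}$ and track the induced normal directions. Since $\alpha$ is outermost in $\Gamma_i$, the shadow bigon disk of $\alpha$ contains no other arc of $D\cap\Gamma_i$, so $\Int(\alpha')$ meets $D$ only in arcs of $D\cap(S\times\{1\})$ coming from $U$-disks, not in endpoints of arcs of $D\cap(\Gamma_1\cup\Gamma_2)$ lying on $v_i$. The key structural input is that at each endpoint of $\alpha$ the induced normal direction points \emph{into} $\alpha'$ (this is the observation recorded just before Lemma~\ref{Lshadow}, using that $D$ is transverse to the $I$-fibers and the normal direction is compatible with the transverse orientation of $B$). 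I want to read off from this which $U$-curves the two endpoints of $\alpha$ land on.

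First I would treat the endpoint question directly. Each endpoint $x$ of $\alpha$ lies on $v_i\times\{1\}$ and is the endpoint of an arc of $D\cap(S\times\{1\})$ which runs to some $U_j$-curve; following that arc of $D$ we either reach a $U_1$-disk or a $U_2$-disk (or, in the case $D$ has no $U_j$-disk for some $j$, we reach $\delta=\mathfrak{u}\cup\mathfrak{u}_0$, the ``front $U_1$-curve'' in the degenerate sense). The claim is that the two endpoints of $\alpha$ reach the two \emph{front} curves — one the front $U_1$-curve, one the front $U_2$-curve. Here is where Lemma~\ref{LinnerU} enters: if both endpoints of $\alpha$ lay on the boundary of the same $U$-disk $\Delta_s$, then (since $\alpha$ is outermost, $\Int(\alpha')\cap\Delta_s=\emptyset$) Lemma~\ref{LinnerU} forces the subdisk $P_s$ of $D$ cut off by $\alpha$ and $\partial\Delta_s$ to contain a $U$-disk; but an outermost choice of $\alpha$, combined with an innermost/outermost reselection inside $P_s$, should contradict outermostness of $\alpha$ in $\Gamma_i$ (any $U$-disk inside $P_s$ has its own arc of $D\cap\Gamma_i$ on $\partial P_s$, contradicting that $\alpha$ was outermost). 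Hence the two endpoints of $\alpha$ lie on the boundaries of two \emph{distinct} $U$-disks (or on $\delta$). Then I would argue that each such endpoint must in fact lie on the \emph{front} curve of its $U_j$-stack: if the endpoint lay on $\partial\Delta_t$ with $t\ge 2$, there would be a $U_j$-disk $\Delta_{t-1}$ strictly in front of it, whose boundary arc in $\Gamma_i$ would be strictly closer to $\partial\Gamma_i$ than $\alpha$ (because of the coherence of the induced order on the $U_j$-disks depicted in Figure~\ref{Farcs}), again contradicting that $\alpha$ is outermost.

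To finish, I must rule out the possibility that both endpoints of $\alpha$ reach $U_1$-curves (front $U_1$-curves of the two ``strands'' of $u_1$) with no $U_2$-disk present at all, and similarly that both reach $U_2$-curves. If $D$ contains no $U_2$-disk, then $D$ lies in the handlebody obtained from $\mathcal{U}$ by cutting along $U_2$, i.e. $\partial D$ reads as a word in $\pi_1$ of a genus-one handlebody using only the generator dual to $U_1$ together with $\mathfrak{v}$; by Lemma~\ref{LbothUV}, $D$ does contain a $U_1$-disk and a $V$-disk, so I can run the band-sum/no-wave machinery — this is exactly the situation set up by Lemma~\ref{Lu0} and Remark~\ref{Rwave}. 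Concretely, with no $U_2$-disk, Lemma~\ref{Lu0}'s hypothesis fails to be usable directly, so instead I would observe that without a $U_2$-disk the outermost arc $\alpha$ has both endpoints effectively on the single curve $\delta$, forcing $\alpha'$ (a subarc of $\delta$ between two points with opposite induced directions and not meeting $\partial E$ for any $U_2$-disk, vacuously) to be a wave with respect to $\{u_1,u_2\}$ — contradicting the no-wave assumption on the Heegaard diagram. This is the step I expect to be the main obstacle: carefully justifying that the outermost arc, in the absence of $U_2$-disks, genuinely produces a \emph{wave} rather than merely a $\partial$-parallel arc (one must use that $\mathfrak{v}$ can be pushed into the central $3$-ball $\mathcal{U}\setminus(U_1\cup U_2)$ and that the two induced directions at $\partial\alpha$ are genuinely opposite along $v_i$, which in turn uses transversality of $D$ to the $I$-fibers). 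Once both ``monochromatic'' cases are excluded, the two endpoints of $\alpha$ necessarily lie on the front $U_1$-curve and the front $U_2$-curve respectively, and in particular $D$ contains at least one $U_2$-disk, which is the assertion of the lemma.
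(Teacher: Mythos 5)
Your proposal correctly establishes the first step (each endpoint of $\alpha$ lands on a front $U_1$- or front $U_2$-curve), and you are right that Lemma~\ref{LinnerU} is the relevant tool; but the argument has two genuine gaps, and both occur precisely where the paper's proof does something more delicate.

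First, for the case that $D$ has no $U_2$-disk, you try to run the no-wave argument directly, arguing that the shadow arc $\alpha'$ is a wave with respect to $\{u_1, u_2\}$ because it ``does not meet $\partial E$ for any $U_2$-disk, vacuously.'' But that observation is not enough: without any $U_2$-disk there is no curve parallel to $u_2$ contained in $D$, so the outermostness hypothesis $\Int(\alpha')\cap D=\emptyset$ tells you nothing about $\Int(\alpha')\cap u_2$. The shadow arc $\alpha'$ is a subarc of $v_i\times\{1\}$, which intersects $u_2\times\{1\}$ transversely, and nothing prevents $\Int(\alpha')$ from containing such intersection points. So $\alpha'$ need not be a wave, and the contradiction fails. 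The paper avoids the wave argument entirely in this case: since the $U_1$-disks $\Delta_1,\dots,\Delta_m$ are parallel, there is a nested family of arcs $\alpha_1=\alpha,\alpha_2,\dots,\alpha_m$ of $D\cap\Gamma_i$ with $\partial\alpha_j\subset\partial\Delta_j$, and Lemma~\ref{LinnerU} is applied to an \emph{innermost} arc $\alpha_s$ (innermost in the planar surface $P=D\setminus\bigcup\Int(\Delta_j)$), whose associated subdisk $P_s$ contains no $U$-disk; this is what gives the contradiction.

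Second, your reselection argument for excluding ``both endpoints of $\alpha$ on the boundary of the same $U$-disk $\Delta_s$'' is not correct as stated. You apply Lemma~\ref{LinnerU} to the outermost $\alpha$ to conclude $P_s$ contains a $U$-disk, and then assert that ``any $U$-disk inside $P_s$ has its own arc of $D\cap\Gamma_i$ on $\partial P_s$, contradicting that $\alpha$ was outermost.'' This is false: $\partial P_s$ consists only of $\alpha$ and a subarc of $\partial\Delta_s$, and the presence of a $U$-disk in $\Int(P_s)$ is perfectly consistent with $\alpha$ being outermost in $\Gamma_i$ (indeed, in the genuine ``no $U_2$-disk'' scenario, the outermost arc $\alpha_1$ cuts off a $P_1$ that does contain $\Delta_2,\dots,\Delta_m$). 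The subdisk $P_s$ lives in $D$, not in $\Gamma_i$, so a $U$-disk inside $P_s$ does not directly produce an arc of $D\cap\Gamma_i$ closer to $\partial\Gamma_i$ than $\alpha$. Again, the fix is to apply Lemma~\ref{LinnerU} to an innermost arc of the nested family, not to $\alpha$ itself; and for the case that both $U_1$- and $U_2$-disks exist, the paper instead uses the direct wave argument with $\{\partial\Delta_1,\partial E_1\}$ as meridian pair (valid there precisely because both front curves lie in $D$, so $\Int(\alpha')$ avoids both), which you do not invoke. The paper's Case (iii) (no $U_1$-disk, both endpoints on $\delta=\mathfrak{u}\cup\mathfrak{u}_0$), handled by Lemma~\ref{Lu0}, is also not addressed in your sketch.
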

\begin{proof}
Let $\alpha'\subset v_i\times\{1\}\subset\partial\Gamma_i$ be the shadow arc of $\alpha$.  
Since $\alpha$ is outermost, $\Int(\alpha')\cap D=\emptyset$, in particular, $\Int(\alpha')$ does not intersect any $U$-disk. 
By the ordering of the $U$-disks described after Lemma~\ref{Lshadow}, this implies that each endpoint of $\alpha'$ lies in the front $U_1$- or $U_2$-curve, see Figure~\ref{Farcs}.

We have 4 cases to consider:

\vspace{8pt}
\noindent
Case (i). $D$ contains no $U_2$-disk.
\vspace{8pt}

We will show that this case cannot happen. Suppose $D$ contains no $U_2$-disk. 
Then, by Lemma~\ref{LbothUV}, $D$ must contain at least one $U_1$-disk. This means that both endpoints of $\alpha'$ lie in the boundary of the front $U_1$-disk. 

Let $\Delta_1,\dots,\Delta_m$ be all the $U_1$-disks in $D$ and we order them according to the induced normal directions described earlier, that is, $\Delta_{j+1}$ lies behind $\Delta_j$ for all $j$. So, $\Delta_1$ is the front $U_1$-disk and $\partial\alpha\subset\partial\Delta_1$. 
Since all the $U_1$-disks are parallel, similar to Lemma~\ref{Lshadow} and as shown in Figure~\ref{Farcs}, there must be a sequence of arcs $\alpha_1,\dots,\alpha_m$ (with $\alpha_1=\alpha$) in $D\cap \Gamma_i$ such that  $\partial\alpha_j\subset\partial\Delta_j$ for all $j=1,\dots, m$.

Now consider the arcs $\alpha_1,\dots,\alpha_m$ in $D$. Let $P$ be the planar surface obtained by removing the interior of the $U$-disks $\Int(\Delta_1),\dots, \Int(\Delta_m)$ from $D$. As $\partial\alpha_j\subset\partial\Delta_j$ for each $j$, each $\alpha_j$ cuts off a sub-surface $P_j$ from $P$. Let $\alpha_s$ ($1\le s\le m$) be an innermost arc, that is, the sub-surface $P_s$ cut off by $\alpha_s$ does not contain any other $\alpha_j$.
Since $\partial\alpha_j\subset\partial\Delta_j$,  this means that $P_s$ does not contain any $U$-disk $\Delta_j$.  This contradicts Lemma~\ref{LinnerU} (after setting $\alpha_s$ and the shadow arc of $\alpha_s$ as $\alpha$ and $\alpha'$ in Lemma~\ref{LinnerU}).

As Case (i) cannot happen, we assume next that $D$ contains at least one $U_2$-disk.

\vspace{8pt}
\noindent
Case (ii). $D$ contains no $U_1$-disk and both endpoints of  $\alpha$ lie in the front $U_2$-curve.
\vspace{8pt}

Let $\Delta_1,\dots,\Delta_m$ be all the $U_2$-disks in $D$ and we order them according to the induced normal directions described above, that is, $\Delta_{j+1}$ lies behind $\Delta_j$ for all $j$. In this case, both endpoints of $\alpha'$ lie in the boundary of the front $U_2$-disk $\Delta_1$.  
Now, the proof for this case is identical to the proof of Case (i) after replacing $U_1$ with $U_2$. So, Case(ii) cannot happen. 

\vspace{8pt}
\noindent
Case (iii). $D$ contains no $U_1$-disk and both endpoints of $\alpha$ lie in the front $U_1$-curve. 
\vspace{8pt}

In this case, $\delta=\mathfrak{u}\cup\mathfrak{u}_0$ is the front $U_1$-curve and $\partial\alpha\subset \delta$. 
Thus, $\alpha$ is an arc in $D$ with both endpoints in $\partial D$ and in particular $\partial\alpha\subset \mathfrak{u}$. 
However, since $\alpha$ is outermost, $D\cap\Int(\alpha')=\emptyset$ and this contradicts Lemma~\ref{Lu0}. Hence, Case (iii) cannot occur. 

Thus, if $D$ contains no $U_1$-disk, then one endpoint of $\alpha$ lies in the front $U_1$-curve $\delta$ and the other endpoint of $\alpha$ lies in the front $U_2$-curve, and the lemma holds.

\vspace{8pt}
\noindent
Case (iv). $D$ contains at least one $U_1$-disk.
\vspace{8pt}

In this case, $D$ contains both $U_1$- and $U_2$-disks. 
Let $\Delta_1$ and $E_1$ be the front $U_1$- and $U_2$-disk respectively. 
So the endpoints of $\alpha$ lie in $\partial\Delta_1\cup\partial E_1$. 
Since $\alpha$ is outermost, $\Int(\alpha')\cap\partial\Delta_1=\emptyset$ and $\Int(\alpha')\cap\partial E_1=\emptyset$. 
We may view $\{\partial\Delta_1, \partial E_1\}$ as the complete set of meridians $\{u_1, u_2\}$ of $\mathcal{U}$. 
If both endpoints of $\alpha$ lie in the same front $U_1$- or $U_2$-curve, i.e.~$\partial\Delta_1$ or $\partial E_1$, then $\alpha'$ represents a subarc of $v_1\cup v_2$ in the Heegaard diagram that is a wave with respect to $\{u_1, u_2\}$, contradicting the no-wave hypothesis on the Heegaard diagram.
This implies that one endpoint of $\alpha'$ lies in the front $U_1$-curve $\partial\Delta_1$ and the other endpoint of $\alpha'$ lies in the front $U_2$-curve $\partial E_1$.   Therefore, the lemma holds.
\end{proof}

Suppose that $D$ has $p$ $U_1$-disks, denoted by $\Delta_1,\dots, \Delta_p$, and $q$ $U_2$-disks, denoted by $E_1,\dots, E_q$. 
We order these disks so that $\Delta_{j+1}$ is behind $\Delta_j$ and $E_{j+1}$ is behind $E_j$ for each $j$. 

Let $\delta_i=\partial\Delta_i$. So $\delta_1,\dots,\delta_p$ and $\delta$ are parallel curves and $\delta$ is behind $\delta_p$. 
By Lemma~\ref{LU12}, there is at least one $U_2$-disk. So $q\ge 1$, but it is possible that there is no $U_1$-disk, in which case $p=0$ and $\delta$ is the front $U_1$-curve.
 
We call the two sides of each curve in $D\cap(S\times\{1\})$ the plus and minus sides, where the induced normal direction points from the minus side to the plus side.
 
Let $\widehat{U}'$ be the complete set of meridians of the handlebody $\mathcal{U}$ formed by the front $U_1$- and $U_2$-curves. 
We call an arc $\alpha'\subset (v_1\cup v_2)\times\{1\}$ a \textit{plus-plus} arc with respect to $\widehat{U}'$ if $\alpha'$ connects the plus side of the front $U_1$-curve to the plus side of the front $U_2$-curve $\partial E_1$ and $\Int(\alpha')$ does not intersect any $U_1$- or $U_2$-curve.   
By Lemma~\ref{LHDparallel}, all the plus-plus arcs are parallel in $S\setminus\widehat{U}'$. 

If $p=0$, then a plus-plus arc $\alpha'$ connects $\delta$ to $\partial E_1$.  
As $\delta=\mathfrak{u}\cup\mathfrak{u}_0$, the endpoint $\delta\cap\partial\alpha'$ lies in either $\mathfrak{u}$ or $\mathfrak{u}_0$.

\begin{lemma}\label{Lbothu}
Suppose $p=0$. 
Let $\alpha'$ and $\beta'$ be two plus-plus arcs with respect to $\widehat{U}'$. Let $X=\delta\cap\partial\alpha'$ and $Y=\delta\cap\partial\beta'$ be the endpoints of $\alpha'$ and $\beta'$ in $\delta$ respectively. 
Then $X$ and $Y$ are either both in $\Int(\mathfrak{u})$ or both in $\Int(\mathfrak{u}_0)$.
\end{lemma}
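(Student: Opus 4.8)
The plan is to argue by contradiction: assume $X\in\Int(\mathfrak{u})$ and $Y\in\Int(\mathfrak{u}_0)$, and produce a wave in the Heegaard diagram, contradicting the no-wave hypothesis from Section~\ref{Ssetup} (which, for subarcs of $v_1,v_2$ with respect to $\{u_1,u_2\}$, is exactly the property we are allowed to keep invoking, cf.\ Remark~\ref{Rwave}).

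First I would record the geometry coming from the hypothesis $p=0$. In this case the front $U_1$-curve is $\delta=u_1\times\{1\}$, and by Lemma~\ref{LU12} there is a front $U_2$-curve $\partial E_1$, so $\widehat{U}'=\{\delta,\partial E_1\}$ is a complete set of meridians of $\mathcal{U}$ (here $\partial E_1$ is isotopic in $S$ to $u_2$). By Lemma~\ref{LHDparallel}, all plus-plus arcs with respect to $\widehat{U}'$ are parallel in $S\setminus\widehat{U}'$; hence their endpoints on $\delta$ all lie along a single subarc of $\delta$, and $\alpha',\beta'$ may be taken to realize the two extreme endpoints, so that $\alpha'\cup\beta'$, together with the subarc $\delta_{XY}\subset\delta$ joining $X$ to $Y$ (the one carrying all the plus-plus endpoints) and the subarc $e\subset\partial E_1$ joining their other endpoints, bounds an embedded rectangle $R$ in $S\setminus\widehat{U}'$. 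Now $\mathfrak{u}$ and $\mathfrak{u}_0$ are the two complementary arcs of $\delta=u_1\times\{1\}$ and they meet exactly at the two corner points of this annular component of $\partial_vN(B_1)$; by Lemma~\ref{Lsourcequad} each corner point is a vertex of the quadrilateral $s_0$ or of $s_1$. Since $X$ and $Y$ are assumed to lie in the interiors of different complementary arcs, $\Int(\delta_{XY})$ contains exactly one of these corner points, say $c$, a vertex of $s_0$ (the case of $s_1$ being identical).

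The crux is to exploit the presence of $c$ inside the band $R$. The point $c$ lies on $u_1\cap v_j$ for some $j\in\{1,2\}$, it is a corner of $s_0$, and by Lemma~\ref{Lsource} the branch directions of $B_1$ along both edges of $s_0$ at $c$ point out of $s_0$; in terms of the plus/minus convention this forces $s_0$ to occupy, at $c$, the quadrant lying on the minus side of $u_1$ and the minus side of $v_j$. I would then use $R$, together with the shadow rectangles of $\alpha'$ and $\beta'$ and the pieces of $D$ near the corresponding outermost arcs of $D\cap\Gamma_i$, to re-route: replacing the portion of $\delta_{XY}$ near $c$ by the subarc of $v_j$ issuing from $c$ on the plus side of $u_1$ (which exists because of the quadrant just identified) converts $\alpha'$ (or $\beta'$) into a subarc of $v_1\cup v_2$ whose two endpoints meet $\delta$ from the plus side with interior disjoint from $\widehat{U}'$ — that is, a wave with respect to $\{u_1,u_2\}$, which is impossible. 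An essentially equivalent way to finish is to cut and paste $R$ into $D$ along these shadow rectangles so as to obtain an essential disk of $\mathcal{U}$ disjoint from an essential disk of $\mathcal{V}$, contradicting the strong irreducibility of the splitting, exactly as at the end of the proof of Lemma~\ref{LinnerU}.

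The main obstacle I anticipate is this last step. Making the ``re-routing at $c$'' precise requires carefully tracking the induced normal and branch directions through the removal of $\Int(s_0)$ and $\Int(s_1)$ from $B$ and along the branch locus of $B_1$, and then checking that the resulting arc is embedded and genuinely crosses $\widehat{U}'$ as a wave (respectively, in the alternative endgame, that the surgered disk is embedded with essential boundary, so that Lemma~\ref{LinnerU}-style strong irreducibility applies). Everything before it is routine once Lemma~\ref{LHDparallel} and the quadrilateral structure of $s_0,s_1$ from Lemmas~\ref{Lsource} and \ref{Lsourcequad} are in hand.
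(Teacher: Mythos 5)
Your proposal takes a genuinely different route from the paper, but the crucial step is left as a gap, and the endgame you sketch does not obviously close it.

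You argue by contradiction and observe that if $X\in\Int(\mathfrak{u})$ and $Y\in\Int(\mathfrak{u}_0)$, then the subarc $\delta_{XY}\subset\delta$ carrying all the plus-plus endpoints must contain a corner point $c\in\partial\mathfrak{u}$ in its interior. That observation is correct, and from here the natural contradiction is quite close: by parallelism of the plus-plus arcs, the open rectangle between two adjacent plus-plus arcs is disjoint from $v_1\cup v_2$ (an arc inside it running $\delta$-to-$\delta$ or $\partial E_1$-to-$\partial E_1$ would be a wave, one running $\delta$-to-$\partial E_1$ would be another plus-plus arc), so $\Int(\delta_{XY})\cap(v_1\cup v_2)$ consists exactly of plus-plus endpoints. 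Hence $c$ would itself have to be the $\delta$-endpoint of a plus-plus arc $\gamma_i$; but $c\in\mathfrak{u}\cap\mathfrak{v}\subset D$ and Lemma~\ref{Lshadow} then force $\gamma_i$ to be the shadow arc of the unique arc of $D\cap(\Gamma_1\cup\Gamma_2)$ at $c$, namely $\mathfrak{v}$, whose shadow arc has both endpoints on $\delta$, contradicting that $\gamma_i$ ends on $\partial E_1$. You do not make this argument. Instead you propose a "re-routing" of $\delta_{XY}$ near $c$ to manufacture a wave or a weakly reducing pair. As stated this does not work: replacing a piece of $\delta_{XY}\subset u_1$ by an arc of $v_j$ produces a curve that is not a subarc of $v_1\cup v_2$, so it is not a wave in the sense of Section~\ref{Ssetup}; and the alternative cut-and-paste toward a weakly reducible pair is not carried out. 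You yourself flag this step as "the main obstacle," which is an accurate self-assessment: it is where the proof is missing.

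Two further points. First, the lemma's conclusion also excludes $X$ or $Y$ lying on $\partial\mathfrak{u}$, and your contradiction hypothesis does not cover that case; it needs the same shadow-arc observation (an endpoint of $\mathfrak{u}$ cannot be the $\delta$-endpoint of a plus-plus arc, because the only arc of $D\cap\Gamma_1$ there is $\mathfrak{v}$). Second, for contrast, the paper's proof avoids the corner point entirely: it first shows that if $X\in\mathfrak{u}$ then in fact $X\in\Int(\mathfrak{u})$, and then runs an induction along the parallel family, using the rectangle between adjacent plus-plus arcs to show $d\subset\mathfrak{u}$ and hence $Z_{t\pm1}\in\Int(\mathfrak{u})$. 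Your contradiction-via-$c$ structure is a legitimate alternative, but it must be finished with the shadow-arc argument rather than the re-routing you propose.
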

\begin{proof} 
Suppose that $X\notin\Int(\mathfrak{u}_0)$. So $X\in \mathfrak{u}$ and $\partial\alpha'\subset  D$.  
By Lemma~\ref{Lshadow}, $\alpha'$ is the shadow arc of an arc $\alpha$ in $D\cap(\Gamma_1\cap\Gamma_2)$. 
By Lemma~\ref{LU12}, $\alpha$ connects $\partial E_1$ to $\mathfrak{u}$.  
This means that $\alpha\ne\mathfrak{v}$ because $\partial D=\mathfrak{u}\cup\mathfrak{v}$ and $\partial\mathfrak{u}=\partial\mathfrak{v}$. 
Moreover, this also implies that the endpoint $X$ of $\alpha'$ is not an endpoint of $\mathfrak{u}$. Hence $X\in \Int(\mathfrak{u})$.

Let $\gamma_1,\dots,\gamma_a$ be all the plus-plus arcs with respect to $\widehat{U}'$.   So $\alpha'$ and $\beta'$ are among $\gamma_1,\dots,\gamma_a$. 
By Lemma~\ref{LHDparallel},  $\gamma_1,\dots,\gamma_a$ are parallel. 
Let $Z_1,\dots, Z_a$ be the endpoints of $\gamma_1,\dots,\gamma_a$ in $\delta$.  So $X$ and $Y$ are two points in $\{Z_1,\dots, Z_a\}$. 
	
Suppose $\gamma_t=\alpha'$. 
Consider the arc $\gamma_{t\pm 1}$ adjacent to $\gamma_t$. 
Let $R$ be the closure of the rectangle in $S\setminus(\delta\cup\partial E_1)$ between $\gamma_t$ and $\gamma_{t\pm 1}$. 
Let $d$ and $e$ be the other two boundary edges of $R$ with $d\subset\delta$ and $e\subset\partial E_1$. 
Since the endpoint $X$ of $\alpha'$ lies in $\Int(\mathfrak{u})$, the arc $d$ must be totally in $\mathfrak{u}\subset\partial D$.   By Lemma~\ref{Lshadow}, this implies that the arc $\gamma_{t\pm 1}$ must be the shadow arc of a component of $D\cap(\Gamma_1\cup\Gamma_2)$. 
By Lemma~\ref{LU12}, $\gamma_{t\pm 1}$ connects $\partial E_1$ to $\mathfrak{u}$, and this means that the endpoint $Z_{t\pm 1}$ of $\gamma_{t\pm 1}$ lies in $\mathfrak{u}$.  
Moreover, the argument at the beginning of the proof implies that $Z_{t\pm 1}\in\Int(\mathfrak{u})$.
Inductively, we can concludes that $Z_1,\dots, Z_a$ all lie in $\Int(\mathfrak{u})$. In particular $Y\in\Int(\mathfrak{u})$.	
\end{proof}

Let $\rho$ be an outermost arc of $D\cap(\Gamma_1\cup\Gamma_2)$ and let $\rho'$ be its shadow arc.  
If $p=0$, by Lemma~\ref{LU12}, $\rho$ connects $\mathfrak{u}$ to the front $U_2$-disk $E_1$. As $\rho$ is outermost, $\Int(\rho')\cap D=\emptyset$.
Note that, if $p=0$,  $\Int(\rho')$ may still intersect $\mathfrak{u}_0$ even though $\rho$ is outermost.  
We first consider the possibility that $p=0$ and  $\Int(\rho')\cap\mathfrak{u}_0\ne\emptyset$. 
If $\Int(\rho')\cap\mathfrak{u}_0$ has a point $Z$ whose direction (induced from $\delta$) point toward the endpoint of $\rho'$ in $\mathfrak{u}$, then the subarc of $\rho'$ between $Z$ and the endpoint in $\mathfrak{u}$ has induced directions at both endpoints pointing inwards.  
This implies that $\rho'$ has a subarc between two adjacent points of $\rho'\cap\delta$ such that the induced normal directions at its endpoints are opposite. 
Hence, this subarc is a wave with respect to $\delta$, contradicting our assumptions that no subarc of $v_1$ and $v_2$ is a wave with respect to $\{u_1, u_2\}$.  
Thus all the points in $\Int(\rho')\cap\mathfrak{u}_0$ have induced directions pointing toward the endpoint $\partial E_1\cap\partial\rho$.
Hence, a subarc of $\rho'$, denoted by $\alpha'$, is a plus-plus arc connecting $\mathfrak{u}_0$ to $\partial E_1$. 
If this happens, we can perform a band sum of $\mathfrak{u}_0$ with a parallel copy of $\partial E_1$ along $\alpha'$.  
Moreover, by Lemma~\ref{LHDparallel} and Lemma~\ref{Lbothu}, all the plus-plus arcs are parallel to $\alpha'$ and connect $\mathfrak{u}_0$ to $\partial E_1$. Hence the band sum can be extended over all plus-plus arcs. 
This band sum changes the curve $\delta$ into a new meridian $\delta'$ of $\mathcal{U}$. 
By Lemma~\ref{Lbandsum}, no subarc of $(v_1\cup v_2)\times\{1\}$ is a wave with respect to $\{\delta',\partial E_1\}$. 
It is guaranteed by Lemma~\ref{Lbothu} that this band sum only affects $\mathfrak{u}_0$ and the disk $D$ is unchanged. 
After finitely many such band sums on $\mathfrak{u}_0$, $\Int(\rho')\cap\mathfrak{u}_0=\emptyset$ and $\rho'$ becomes a plus-plus arc with respect to $\{\delta',\partial E_1\}$.

\begin{lemma}\label{Lparallel}
Let $\Delta_1,\dots, \Delta_p$ and $E_1,\dots, E_q$ be as above ($q\ge 1$, $p\ge 0$). 
Let $P$ be the planar surface obtained by removing $\Int(\Delta_1),\dots, \Int(\Delta_p)$ and  $\Int(E_1),\dots,\Int(E_q)$ from $D$.  Let $\alpha_1$ be an arc of $D\cap(\Gamma_1\cup\Gamma_2)$ that is outermost in $\Gamma_1\cup\Gamma_2$. Then,
an arc $\eta$ in $D\cap (\Gamma_1\cup\Gamma_2)$ is outermost if and only if $\eta$ is parallel to $\alpha_1$ in the planar surface $P$. 
\end{lemma}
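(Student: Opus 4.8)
The plan is to analyze the arcs of $D\cap(\Gamma_1\cup\Gamma_2)$ by combining the outermost-arc structure from Lemma~\ref{LU12} with the innermost-arc argument used in the proof of Lemma~\ref{LU12}, Case~(i). First I would observe the easy direction: if $\eta$ is parallel to $\alpha_1$ in the planar surface $P$, then the subdisk of $D$ cut off by $\eta$ (on the appropriate side) contains no $U$-disk, because a band of parallelism in $P$ separates $\eta$ from all the $\Delta_j$'s and $E_j$'s; hence its shadow arc $\eta'$ meets no $U$-disk, and by the ordering of $U$-disks after Lemma~\ref{Lshadow} each endpoint of $\eta'$ lies on the front $U_1$- or front $U_2$-curve, so (arguing as in Lemma~\ref{LU12}, using that $\eta$ cannot form a wave) $\eta$ connects the front $U_1$-curve to the front $U_2$-curve; then $\eta$ must itself be outermost in $\Gamma_1\cup\Gamma_2$, since any arc of $D\cap(\Gamma_1\cup\Gamma_2)$ nested outside $\eta$ in $\Gamma_i$ would have its endpoints trapped between $\eta'$ and $\partial\Gamma_i$, forcing an endpoint on a non-front $U$-curve, contradicting Lemma~\ref{LU12}.

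For the converse — the main content — I would argue as follows. Let $\eta$ be an outermost arc of $D\cap(\Gamma_1\cup\Gamma_2)$, and suppose $\eta$ is not parallel to $\alpha_1$ in $P$. Since $\eta$ is outermost, $\Int(\eta')\cap D=\emptyset$, where $\eta'$ is the shadow arc; after the band sums on $\mathfrak{u}_0$ described just before the lemma (in the case $p=0$) we may assume $\Int(\eta')$ meets no front $U$-curve at all, so by Lemma~\ref{LU12} $\eta$ runs from the front $U_1$-curve to the front $U_2$-curve, and similarly $\alpha_1$ does. Both $\eta'$ and $\alpha_1'$ are therefore plus-plus arcs with respect to $\widehat{U}'$ (or its band-summed modification), hence parallel in $S\setminus\widehat{U}'$ by Lemma~\ref{LHDparallel}. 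Now look in $D$: the two arcs $\eta$ and $\alpha_1$ cut off, on one side each, subdisks $P_\eta$ and $P_{\alpha_1}$ of $D$ bounded by the arc together with an arc on the front $U_1$-curve and front $U_2$-curve. Because $\eta'$ and $\alpha_1'$ are parallel plus-plus arcs, these two subdisks can be compared: if they are disjoint, the region of $D$ between them is a "band" whose two $U$-boundary arcs lie on $\partial\Delta_1$ and $\partial E_1$ respectively; if one contains the other, the smaller subdisk contains no $U$-disk while its boundary arc on $\delta_1$ and on $\partial E_1$ is inner. The key step is then to rule out nesting: if (say) $P_\eta\subsetneq P_{\alpha_1}$ and $P_\eta$ contains no $U$-disk, then pushing $\eta$ onto $\eta'$ across the meridian disk $V_i$ turns $P_\eta$ into an embedded disk in $\mathcal{V}$ with essential boundary disjoint from $\partial\Delta_1$ and $\partial E_1$, contradicting strong irreducibility of the Heegaard splitting exactly as in the proof of Lemma~\ref{LinnerU}; and if $P_\eta$ does contain a $U$-disk, then (by the ordering of $U$-disks, and because $\eta'$ meets no front $U$-curve) that $U$-disk is behind the front disk, and one produces an arc of $D\cap(\Gamma_1\cup\Gamma_2)$ lying strictly outside $\eta$ in some $\Gamma_i$, contradicting that $\eta$ is outermost. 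Therefore $P_\eta$ and $P_{\alpha_1}$ are not nested, the region of $D$ between them contains no $U$-disk (else, again, an arc of $D\cap(\Gamma_1\cup\Gamma_2)$ would be nested outside one of them), and this region exhibits a parallelism between $\eta$ and $\alpha_1$ in $P$.

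The step I expect to be the main obstacle is the control of the subdisk $P_\eta$ of $D$ "behind" an outermost arc: one must show that if $\eta$ is outermost in the annulus $\Gamma_1\cup\Gamma_2$ then the corresponding subdisk of $D$ either contains no $U$-disk (and is then killed by the strong-irreducibility argument of Lemma~\ref{LinnerU}) or else its $U$-disks force the existence of another arc of $D\cap(\Gamma_1\cup\Gamma_2)$ that is nested strictly outside $\eta$, violating outermost-ness. Making this dichotomy precise requires carefully tracking, via the induced normal direction on $D\cap(S\times\{1\})$, how arcs of $D\cap(\Gamma_1\cup\Gamma_2)$ emanate from the boundaries of the $U$-disks inside $P_\eta$ — essentially the innermost-arc bookkeeping from Case~(i) of Lemma~\ref{LU12}, now applied relative to an outermost rather than a front curve. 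Once this is in place, Lemma~\ref{LHDparallel} and Lemma~\ref{Lshadow} assemble the parallelism statement without further difficulty.
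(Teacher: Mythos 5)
Your overall plan (handle the two directions separately; use Lemma~\ref{LU12} to place the endpoints of outermost arcs on the front $U_1$- and $U_2$-curves; use Lemma~\ref{LHDparallel} to make their shadow arcs parallel in $S$; then upgrade this to parallelism in $P$) matches the paper's, but both of your key steps have genuine gaps, and in both places the paper uses an argument you did not supply.

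In the forward direction you conclude that because $\eta$ connects the front $U_1$-curve to the front $U_2$-curve, $\eta$ must be outermost, with the justification that ``any arc of $D\cap(\Gamma_1\cup\Gamma_2)$ nested outside $\eta$ in $\Gamma_i$ would have its endpoints trapped between $\eta'$ and $\partial\Gamma_i$, forcing an endpoint on a non-front $U$-curve, contradicting Lemma~\ref{LU12}.'' This does not follow: Lemma~\ref{LU12} constrains only \emph{outermost} arcs, so an arc lying between $\eta$ and $v_i\times\{1\}$ with an endpoint on a non-front $U$-curve is not in contradiction with anything. (It is also not clear from what you wrote that $\Int(\eta')\cap D=\emptyset$, which is what outermost-ness actually means.) The paper instead argues one step at a time: for $\eta$ adjacent to $\alpha_1$ in $P$, the quadrilateral $D_\eta$ in $P$ between them contains no $U$- or $V$-disk, so after pushing $\alpha_1$ and $\eta$ across their shadow bigons it becomes a $\partial$-parallel disk in $S\times I$, and tightness of the Heegaard diagram then forces $\Int(\eta')\cap D=\emptyset$, i.e.\ $\eta$ outermost; one then inducts.

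In the converse direction your ``rule out nesting via Lemma~\ref{LinnerU}'' step does not apply as stated: Lemma~\ref{LinnerU} deals with an arc whose two endpoints lie on the boundary of the \emph{same} $U$-disk, but your $\eta$ has one endpoint on $\partial\Delta_1$ and the other on $\partial E_1$, so the subdisk you push into $\mathcal{V}$ has boundary involving arcs on both $\partial\Delta_1$ and $\partial E_1$ and cannot be isotoped off both. More significantly, in the ``not nested'' case you assert that the region between $\eta$ and $\alpha_1$ contains no $U$-disk (else nesting) and therefore exhibits a parallelism in $P$; but parallelism in $P$ requires the absence of $V$-disks as well, and nothing in your argument excludes $V$-disks in that region. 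This is exactly what the paper's argument is for: it takes the arc $\xi'$ in the rectangle $R$ that is \emph{adjacent} to $\alpha_1'$, forms the quadrilateral $D_\xi$ and the disk $D_\xi'$ with $\partial D_\xi'=\partial R_0\subset S\times\{1\}$, and observes that if $D_\xi$ contained any $U$- or $V$-disk then the 2-sphere $R_0\cup D_\xi'$ would be homotopically non-trivial (as in Lemma~\ref{Lnoclosed}), contradicting the irreducibility of $M$; this kills both $U$- and $V$-disks simultaneously and sets up the induction through the arcs of $R$. Replacing your global comparison of $P_\eta$ and $P_{\alpha_1}$ with this adjacent-arc induction, and the strong-irreducibility appeal with the irreducibility-of-$M$ sphere argument, is what is needed to close the gaps.
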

\begin{proof}
First suppose that there is another arc $\eta$ in $D\cap (\Gamma_1\cup \Gamma_2)$ that is parallel to $\alpha_1$ in $P$, i.e., $\eta\cup\alpha_1$ and two subarcs in $\partial P$ form a quadrilateral subdisk $D_\eta$ of $P$.  
We first consider the case that $\eta$ is adjacent to $\alpha_1$, i.e., $\Int(D_\eta)$ is disjoint from $\Gamma_1\cup\Gamma_2$.   
We can perform an isotopy on $D_\eta$ pushing $\alpha_1$ and $\eta$ across their shadow bigon disks onto their shadow arcs in $(v_1\cup v_2)\times\{1\}$. 
This isotopy pushes $D_\eta$ into a disk $D_\eta'$, with $\partial D_\eta'\subset S\times\{1\}$. 
Since $\alpha_1$ is outermost and since $\alpha_1$ and $\eta$ are adjacent in $P$, after a small perturbation, $D_\eta'$ is disjoint from $(v_1\cup v_2)\times\{1\}$.  
Moreover, since $D_\eta$ does not contain any $U$-disk or $V$-disk, $D_\eta'\subset S\times I$.  
This means that $D_\eta'$ must be a $\partial$-parallel disk in $S\times I$ that is disjoint from $\Gamma_1\cup\Gamma_2$ after isotopy.  
Since the intersection of the curves in the Heegaard diagram is tight, this implies that $D$ does not intersect the interior of the subdisk of $S\times\{1\}$ bounded by $\partial D_\eta'$.
Hence, $\eta$ must be outermost in $\Gamma_1\cup\Gamma_2$. 
Thus, if $\eta$ is parallel and adjacent to $\alpha_1$ in $P$, $\eta$ must also be an outermost arc in  $\Gamma_1\cup\Gamma_2$. 
Inductively, we can conclude that every arc of $D\cap \Gamma_i$ that is parallel to $\alpha_1$ in $P$ must be outermost in  $\Gamma_1\cup\Gamma_2$.

Conversely, suppose an arc $\eta$ of $D\cap \Gamma_i$ is outermost in $\Gamma_i$. Then by Lemma~\ref{LU12}, $\eta$ connects the plus side of the front $U_1$-curve to the plus side of the front $U_2$-curve $\partial E_1$. 
Recall that if $p\ge 1$ then the front $U_1$-curve is $\partial\Delta_1$, and if $p=0$ then there is no $U_1$-disk and $\delta$ is the front $U_1$-curve.

Next, we suppose $p\ge 1$, that is, $\eta$ connects $\partial\Delta_1$ to $\partial E_1$. The argument for the case $p=0$ is almost identical after replacing $\partial\Delta_1$ with $\delta$. 

We may view $\{\partial\Delta_1,\partial E_1\}$ as the   complete set of meridians $\{u_1, u_2\}$ in the Heegaard diagram. 
Let $\alpha_1'$ and $\eta'$ be the shadow arcs of $\alpha_1$ and $\eta$ respectively.  
Since both $\alpha_1$ and $\eta$ are outermost and by Lemma~\ref{LU12}, $\alpha_1'$ and $\eta'$ are subarcs of $v_1\cup v_2$ connecting the plus side of $\partial \Delta_1$ to the plus side of $\partial E_1$. Hence, by Lemma~\ref{LHDparallel}, $\alpha_1'$ and $\eta'$ are parallel in $S\setminus(\partial\Delta_1\cup\partial E_1)$.
Let  $R$ be the closure of the rectangle in $S\setminus(\partial\Delta_1\cup\partial E_1)$ between $\alpha_1'$ and $\eta'$. So $\alpha_1'$ and $\eta'$ are a pair of opposite edges of $\partial R$, and the other pair of edges of $\partial R$ are subarcs of $\partial\Delta_1$ and $\partial E_1$.

The rectangle $R$ may contain other subarcs of $(v_1\cup v_2)\times\{1\}$. Let $\xi'$ be the arc of  $(v_1\cup v_2)\times\{1\}$ in $R$ that is adjacent to $\alpha_1'$. It follows from Lemma~\ref{Lshadow} that $\xi'$ must be the shadow arc of an arc $\xi$ of $D\cap(\Gamma_1\cup\Gamma_2)$.
Let $R_0$ be the sub-rectangle of $R$ between $\xi'$ and $\alpha_1'$.  
So $\alpha_1'$ and $\xi'$ are a pair of opposite edges of $\partial R_0$. 
Let $d$ and $e$ be the other two edges of $\partial R_0$ with $d\subset\partial\Delta_1$ and $e\subset\partial E_1$. 
The 4 arcs $d$, $e$, $\alpha_1$ and $\xi$ form a closed curve in $D$, bounding a subdisk $D_\xi$ of $D$, see Figure~\ref{FDarcs}(b) for a picture. 
Note that $\alpha_1$ and $\xi$ are parallel in $P$ if and only if $D_\xi$ does not contain any $U$-disk or $V$-disk. 

By pushing $\alpha_1$ and $\xi$ onto $\alpha_1'$ and $\xi'$ along their shadow bigons respectively, we can isotope $D_\xi$ into a disk $D_\xi'$ with $\partial D_\xi'=\partial R_0\subset S\times\{1\}$.

Suppose $D_\xi$ contains at least one $U$- or $V$-disk. 
Since all the $U_i$- or $V_i$-disks are parallel in $N(B)$ with consistent normal directions, similar to the proof of Lemma~\ref{Lnoclosed}, there is an arc disjoint from the Heegaard surface $S\times\{1\}$ and connecting the plus side of $D_\xi'$ to its minus side. 
This implies that the 2-sphere $R_0\cup D_\xi'$ is homotopically non-trivial, contradicting the hypothesis that $M$ is irreducible.
Thus $D_\xi$ does not contain any $U$-disk or $V$-disk and this means that $\alpha_1$ and $\xi$ are parallel in $P$.
Thus, we can inductively conclude that $\alpha_1$ and $\eta$ are parallel in $P$. 

It remains to consider the special case that $p=0$, i.e., $D$ contains no $U_1$-disk, in which case, $\alpha_1$ and $\eta$ are arcs connecting $\partial E_1$ to $\mathfrak{u}\subset\partial D$.  
The argument is mostly identical to the case $p\ge 1$ after replacing $\partial \Delta_1$ with the meridian $\delta=\mathfrak{u}\cup\mathfrak{u}_0$. We will use the same notations and only point out the slight difference below.

If $\Int(\alpha_1')\cap\mathfrak{u}_0\ne\emptyset$, then we can perform band sums on $\mathfrak{u}_0$ and with parallel copies of $\partial E_1$ as in the argument on $\rho'$ before this lemma. The band sum does not affect $D$.  So, after possibly some band sums on $\mathfrak{u}_0$, we may suppose $\Int(\alpha_1')\cap\mathfrak{u}_0=\emptyset$ and $\alpha_1'$ is a plus-plus arc with respect to $\{\delta,\partial E_1\}$. Moreover, as in the proof of Lemma~\ref{Lbothu}, $\alpha_1'$ connects  $\Int(\mathfrak{u})$ to $\partial E_1$. 

We use the same notation as the case $p\ge 1$. 
Let  $R$ be the closure of the rectangle in $S\setminus(\delta\cup\partial E_1)$ between $\alpha_1'$ and $\eta'$, and let $\xi'$ be the subarc of $(v_1\cup v_2)\times\{1\}$ in $R$ that is parallel and adjacent to $\alpha_1'$. 
The arc $\xi'$ has one endpoint in $\delta$ and the other endpoint in $\partial E_1$. 
Since the endpoint of $\alpha_1'$ in $\delta$ lies in $\Int(\mathfrak{u})$ and since $\xi'$ is adjacent to $\alpha_1'$, the endpoint of $\xi'$ in $\delta$ must also lie in $\mathfrak{u}\subset\partial D$. 
Hence, by Lemma~\ref{Lshadow}, $\xi'$ must be the shadow arc of an arc $\xi$ of $D\cap(\Gamma_1\cup\Gamma_1)$. 
Similar to the case $p\ge 1$, let $R_0$ be the sub-rectangle of $R$ between $\xi'$ and $\alpha_1'$, and let $d\subset\delta$ and $e\subset\partial E_1$ be edges of $\partial R_0$. 
Since the arc $\xi'$ is adjacent to $\alpha_1'$, 
the edge $d$ does not contain an endpoint of $\mathfrak{v}$, which implies that the subarc $d$ of $\delta$ is totally in $\mathfrak{u}$. Hence, as shown in Figure~\ref{FDarcs}(c), the 4 arcs $d$, $e$, $\alpha_1$ and $\xi$ form a closed curve in $D$, bounding a subdisk $D_\xi$ in $D$. Other parts of the argument are identical to the case $p\ge 1$.  
\end{proof}


\noindent
Step 1. Eliminate the $U_1$-disks.  
\vspace{8pt}

Suppose $p\ne 0$. The first step is to perform isotopies on $D$ pushing part of $D\cap(S\times I)$ out of $S\times I$. When restricted to $S\times I$, the isotopy is basically
a $\partial$-compression on $D\cap(S\times I)$. 
The operation also changes the complete set of meridians $\{u_1, u_2\}$ to a new set of meridians.

Let $\alpha_1$ be an arc in $D\cap \Gamma_i$ that is outermost in the annulus $\Gamma_i$. Since $\delta$ is behind $\delta_1,\dots,\delta_p$ ($\delta_i=\partial\Delta_i$), by Lemma~\ref{LU12}, $\alpha_1$ is an arc connecting $\Delta_1$ to $E_1$. 
Let $\alpha_1'\subset v_i\times\{1\}$ be the shadow arc of $\alpha_1$. Since $\delta$ is behind $\delta_1,\dots,\delta_p$, $\alpha_1'$ does not intersect $\delta$. 
So $\alpha_1'$ is a subarc of $v_i$ in $S\setminus(\partial\Delta_1\cup\partial E_1)$ connecting the plus sides of $\Delta_1$ to the plus side of $E_1$.   
By Lemma~\ref{LHDparallel}, all the subarcs of $v_1\cup v_2$ in $S\setminus(\partial\Delta_1\cup\partial E_1)$ that connect the plus side of $\Delta_1$ to the plus side of $E_1$ are parallel, and by Lemma~\ref{Lshadow}, every such subarc is the shadow arc of an outermost arc of $D\cap (\Gamma_1\cup\Gamma_2)$. 

Let $P$ be the planar surface as in Lemma~\ref{Lparallel}. 
Let $R_P$ be a rectangle in $P$ that contains all the arcs of $D\cap(\Gamma_1\cup\Gamma_2)$ that are parallel to the outermost arc $\alpha_1$ in $P$. 
By Lemma~\ref{Lparallel}, $R_P\cap (\Gamma_1\cup\Gamma_2)$ consists of precisely all the outermost arcs of $D\cap (\Gamma_1\cup\Gamma_2)$. 
Moreover, as shown in Figure~\ref{Fproduct}(c), $R_P$ is parallel to a shadow rectangle $R$ in $S\times\{1\}$ that contains all the shadow arcs of the outermost arcs of $D\cap (\Gamma_1\cup\Gamma_2)$. 

Now we perform a small isotopy pushing $R_P$ first onto $R$ and then out of $S\times I$. The effect of this operation on $D\cap (S\times I)$ is basically a $\partial$-compressing.  
After this operation,  $\Delta_1$ and $E_1$ are merged into a single disk whose boundary curve is a band sum of $\partial\Delta_1$ and $\partial E_1$ along $R$ in $S$, see section~\ref{Ssetup} for a description of band sum. 

By Lemma~\ref{Lparallel}, the rectangle $R$ contains every subarc of $v_1\cup v_2$ that connect the plus side of $\Delta_1$ to the plus side of $E_1$. So, by  Lemma~\ref{Lbandsum}, the curve obtained by this band sum has tight intersection with $v_1$ and $v_2$.

We have 3 cases.

\vspace{10pt}

\noindent 
Case (1). $p>q$, that is, the number of $U_1$-disks is more than the number of $U_2$-disks.
\vspace{8pt}

As shown in Figure~\ref{Farcspq}(a), $D\cap \Gamma_i$ has $q$ parallel arcs $\alpha_1,\dots,\alpha_q$ such that each $\alpha_i$ connects $\Delta_i$ to $E_i$. We may successively perform the operation described above on $D$ using $\alpha_1,\dots,\alpha_q$. These operations merge $\Delta_i$ and $E_i$ into a single disk $E_i'$ for each $i=1,\dots, q$. 
After this operation, $D\cap\mathcal{U}$ has two sets of parallel disks $E_1',\dots, E_q'$ and $\Delta_{q+1},\dots, \Delta_p$.  
As $E_i'$ is obtained by a band sum of $E_i$ and $\Delta_i$, $\{\partial E_i', \partial\Delta_j\}$ is a new complete set of meridians for the handlebody $\mathcal{U}$.  
The disks $E_1',\dots, E_q'$ are parallel and admit compatible normal directions. 
After this step, we call $E_1',\dots, E_q'$ the $U_2'$-disks and the remaining disks $\Delta_{q+1},\dots, \Delta_p$ the $U_1'$-disks.

\begin{figure}[h]
	\vskip 0.3cm
	\begin{overpic}[width=4in]{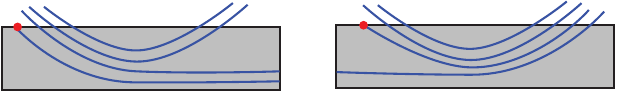}
		\put(20,-5){(a)}
		\put(75,-5){(b)}
		\put(20,7.7){$\alpha_1$}
		\put(75,8){$\alpha_1$}
		\put(33,6.5){$\alpha_q$}
	\end{overpic}
	\vskip 0.5cm
	\caption{}\label{Farcspq}
\end{figure}

\vspace{10pt}
\noindent 
Case (2). $p=q$, i.e.~there is the same number of $U_1$- and $U_2$-disks.
\vspace{8pt} 

In this case, the operation above yield $q$ parallel disks $E_1',\dots, E_q'$, which we still call $U_2'$-disks. There is no more $U_1$-disk after the operations. 
Similar to Case (1), $\{\partial E_i', \delta\}$ is a new complete set of meridians for $\mathcal{U}$.

\vspace{10pt}
\noindent 
Case (3). $q>p$, that is, the number of $U_2$-disks is more than the number of $U_1$-disks.
\vspace{8pt}

This case is slightly more complicated. 
As $q>p$,  $D\cap \Gamma_i$ has $p$ parallel arcs $\alpha_1,\dots,\alpha_p$ such that each $\alpha_j$ connects $\Delta_j$ to $E_j$, see Figure~\ref{Farcspq}(b). 
Let $\alpha_j'\subset v_i\times\{1\}$ be the shadow arc of $\alpha_j$. 
We divide the operation into two steps. The first step is similar to Case (1): By successively pushing rectangles containing $\alpha_1,\dots,\alpha_p$ across the shadow bigons and out of $S\times I$, for each $j$, we merge $\Delta_j$ and $E_j$ into a single disk which we denote by $\Delta_j'$ ($j=1,\dots, p$).  
We call each $\Delta_j'$ ($j=1,\dots, p$) a $U_1'$-disk and call the remaining disk $E_{p+1},\dots, E_q$ the $U_2'$-disks.

The second step is to perform a similar operation on the curve $\delta=\mathfrak{u}\cup \mathfrak{u}_0$.  
Recall the the curve $\delta$ is parallel to and behind $\partial\Delta_p$.  Denote the endpoint of $\alpha_p$ in $\partial\Delta_p$ by $X_p$ and denote the point of $\delta\cap(v_i\times\{1\})$ next and behind $X_p$ by $X_{p+1}$, see Figure~\ref{FXp}.

We have two subcases. The fist subcase is that $X_{p+1}\in \mathfrak{u}$ and the second subcase is that $X_{p+1}\in \Int(\mathfrak{u}_0)$. 
The difference is that if $X_{p+1}\in \mathfrak{u}$, then $X_{p+1}$ is an endpoint of an arc in $D\cap(\Gamma_1\cup\Gamma_2)$, see Figure~\ref{FXp}(a), but if $X_{p+1}\in \Int(\mathfrak{u}_0)$, then no arc of $D\cap(\Gamma_1\cup\Gamma_2)$ is incident to $X_{p+1}$, see Figure~\ref{FXp}(b). Note that, as in the proof of Lemma~\ref{Lbothu}, since $X_{p+1}$ cannot an endpoint of $\mathfrak{v}$, if $X_{p+1}\in \mathfrak{u}$ then $X_{p+1}\in \Int(\mathfrak{u})$.

\begin{figure}[h]
	\vskip 0.2cm
	\begin{overpic}[width=4in]{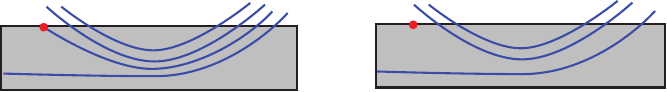}
		\put(20,-5){(a)}
		\put(75,-5){(b)}
	    \put(2.5, 6){$X_{p+1}$}
	    \put(58, 6){$X_{p+1}$}
	\end{overpic}
	\vskip 0.5cm
	\caption{}\label{FXp}
\end{figure}

Before we proceed with the two subcases, we would like to point out that the distinction between the two subcases is consistent for all the outermost bigons. More precisely, suppose there is another arc $\beta_1$ of $D\cap \Gamma_i$ that is outermost in $\Gamma_i$. 
Similar to $\alpha_1,\dots,\alpha_p$ described above, $D\cap\Gamma_i$ has $p$ parallel arcs $\beta_1,\dots,\beta_p$ such that each $\beta_j$ connects $\Delta_j$ to $E_j$. 
Denote the endpoint of $\beta_p$ in $\partial\Delta_p$ by $Y_p$.  As $\delta$ is parallel to and behind $\partial\Delta_p$,  $\delta\cap(v_i\times\{1\})$ has a point, which we denote by $Y_{p+1}$, next to and behind $Y_p$. Similar to $X_{p+1}$, $Y_{p+1}$ lies in either $\mathfrak{u}$ or $\mathfrak{u}_0$.
It follows from the proof of Lemma~\ref{Lbothu} that $X_{p+1}$ and $Y_{p+1}$ are either both in $\Int(\mathfrak{u})$ or both in $\Int(\mathfrak{u}_0)$.

\vspace{8pt}

\noindent 
Subcase (3a). $X_{p+1}\in \Int(\mathfrak{u})$.

In this subcase, since $\mathfrak{u}\subset\partial D$, there is an arc $\alpha_{p+1}$ of $D\cap \Gamma_i$ parallel to $\alpha_p$ and connecting $X_{p+1}$ to $\partial E_{p+1}$, see Figure~\ref{FXp}(a) for a picture of $\alpha_{p+1}$ in $\Gamma_i$ and see Figure~\ref{FEp}(a) for a picture of $\alpha_{p+1}$ in $D$. 
Similar to the operation above, we can perform a $\partial$-compression on $D\setminus\Int(E_{p+1})$ in $S\times I$ along $\alpha_{p+1}$, that is,  pushing a rectangular neighborhood of $\alpha_{p+1}$ in $D\setminus\Int(E_{p+1})$ across the the shadow bigon of $\alpha_{p+1}$ and out of $S\times I$. 
This operation changes the annulus $D\setminus\Int(E_{p+1})$ into a disk $D'$ which can be viewed as a disk obtained by an isotopy on the disk $D\setminus(\Int(E_{p+1})\cup N(\alpha_{p+1}))$, see Figure~\ref{FEp}(b).  In particular, as shown in Figure~\ref{FEp}(b), $\partial E_{p+1}$ and the arc $\mathfrak{u}$ of $\partial D$ merge into a new arc which we denote by $\mathfrak{u}'$. Note that $\partial D'=\mathfrak{u}'\cup\mathfrak{v}$. The curve $\delta$ is merged (or band-summed) with $\partial E_{p+1}$ into a new curve $\delta'$ parallel to and behind $\partial\Delta_p'$, where $\Delta_p'$ is the disk described at the beginning of Case (3).

\begin{figure}[h]
	\vskip 0.2cm
	\begin{overpic}[width=4in]{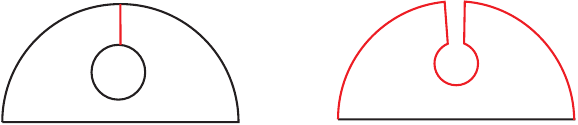}
		\put(18,-5){(a)}
		\put(77,-5){(b)}
		\put(16.5, 8){$E_{p+1}$}
		\put(9.5, 15){$\mathfrak{u}$}
		\put(9, 1){$\mathfrak{v}$}
		\put(67.5, 15.5){$\mathfrak{u}'$}
		\put(21.5,16){$\alpha_{p+1}$}
        \put(87, 7){$D'$}
	\end{overpic}
	\vskip 0.5cm
	\caption{}\label{FEp}
\end{figure}

\vspace{8pt}

\noindent 
Subcase (3b). $X_{p+1}\in \Int(\mathfrak{u}_0)$.

In this subcase, there is no arc of $D\cap \Gamma_i$ connecting $\partial E_{p+1}$ to $X_{p+1}$ since $\mathfrak{u}_0$ is not part of $\partial D$, see Figure~\ref{FXp}(b). 
Take a curve $\varepsilon\subset S\times\{1\}$ parallel to $\partial E_{p+1}$ and lying between $\partial E_p$ and $\partial E_{p+1}$. 
Let $\alpha_{p+1}'$ be the subarc of $v_i\times\{1\}$ connecting $X_{p+1}$ to $\varepsilon$ and with $ \alpha_p'\subset\alpha_{p+1}'$. 
We perform a band sum of $\mathfrak{u}_0$ with $\varepsilon$ along $\alpha_{p+1}'$, which merge $\mathfrak{u}_0$ with $\varepsilon$ into a single arc which we denote by $\mathfrak{u}_0'$. 
The curve $\delta$ is merged with $\varepsilon$ into a new curve $\delta'$ parallel to and behind $\partial\Delta_p'$.
The operation in this subcase is only on $\mathfrak{u}_0$ and it does not affect the disk $D$.

\vspace{8pt}

After the operations above, we have two sets of parallel disks, which we call $U_1'$-disks and $U_2'$-disks, and a new curve $\delta'$ parallel to the $U_1'$-disks (if there is a $U_1'$-disk).  We give a similar order to these $U_1'$- and $U_2'$-disks. In particular, the curve $\delta'$ is behind all the $U_1'$-disks.   

We use $D'$ to denote the disk after these operations and use $\mathfrak{u}'$ and $\mathfrak{u}_0'$ to denote the two arcs in $\delta'$.  In particular, we have 
 $\partial D'=\mathfrak{u}'\cup\mathfrak{v}$ and $\delta'=\mathfrak{u}'\cup\mathfrak{u}_0'$.

In our operation above, we push a rectangle $R_P\subset P$  onto a shadow rectangle $R\subset S\times\{1\}$. The rectangle $R_P$ contains all the outermost arcs of $D\cap(\Gamma_1\cup\Gamma_2)$ and the shadow rectangle $R$ is maximal in the sense that $R$ contains all the arcs in the Heegaard diagram that are parallel to the shadow arc of an outermost arc. 
Thus, by Lemma~\ref{Lbandsum}, the $U_1'$- and $U_2'$-curves satisfy the following properties:

\begin{enumerate}
	\item a $U_1'$-curve and a $U_2'$-curve form a new complete set of meridians for $\mathcal{U}$, and they have tight intersection with $v_1\times\{1\}$ and $v_2\times\{1\}$,
	\item no subarc of $(v_1\cup v_2)\times\{1\} $ is a wave with respect to the new complete set of meridians for $\mathcal{U}$.
\end{enumerate}

We call all the $U_1'$- and $U_2'$-disks the $U'$-disks.  It follows from our construction that $D'$ always contains a $U'$-disk and a $V$-disk after the operation. So Lemma~\ref{LbothUV} remains true for $D'$. 
In the proof of Lemmas~\ref{Lshadow}--\ref{Lparallel}, we did not really use the hypothesis that the Heegaard diagram is minimal in the sense of Definition~\ref{Dmin}. 
The only properties that we need are: (1) the Heegaard splitting is strongly irreducible, and (2) no subarc of $(v_1\cup v_2)\times\{1\} $ is a wave with respect to the complete set of meridians for $\mathcal{U}$, e.g.~see Remark~\ref{Rwave}.  
These properties are not changed by our operations on $D$. Thus, these lemmas also hold for $D'$, $U_1'$, $U_2'$, and $\delta'$. 
Therefore, we can repeat the operations/isotopies until there is no more $U_1'$-disk, in which case the curve $\delta'$ is the front $U_1'$-curve. 

Next, we suppose that there is no more $U_1'$-disk. It follows from Lemma~\ref{LU12} that $D'$ always contains a $U_2'$-disk.

\vspace{8pt}
\noindent
Step 2. Perform operations to reduce the number of $U_2'$-disk to one.

\vspace{8pt}

Consider the disk $D'$ after Step 1 and
let $E_1,\dots, E_q$ be the $U_2'$-disks in $D'$. In this step, we consider the situation $q\ge 2$.

Let $\rho$ be an outermost arc in $D'\cap\Gamma_i$, and let $\rho'\subset v_i\times\{1\}$ be its shadow arc. 
It follows from the proof of Case (ii) and Case (iii) of Lemma~\ref{LU12} that 
$\rho'$ must connect the front $U_2'$-disk to $\mathfrak{u}'\subset\partial D'$. 

Before performing $\partial$-compressions as in Step 1, we need to consider the possibility that $\Int(\rho')\cap\mathfrak{u}_0'\ne\emptyset$. 
This cannot happen in Step 1 because if there is a $U_1$-disk then $\delta$ is not the front $U_1$-curve.
If $\Int(\rho')\cap\mathfrak{u}_0'\ne\emptyset$, then we can apply the operation described before Lemma~\ref{Lparallel}, that is, perform band sum on $\mathfrak{u}_0'$ with parallel copies of $\partial E_1$. It is guaranteed by Lemma~\ref{Lbothu} that this band sum only changes $\mathfrak{u}_0'$ and does not affect the disk $D'$. 
After a finite number of such band sums, $\Int(\rho')\cap\mathfrak{u}_0'=\emptyset$. 

After removing such intersection with $\mathfrak{u}_0'$, we perform a $\partial$-compression along outermost bigons as in Subcase (3a) and obtain a new disk which we still denote by $D'$, see Figure~\ref{FEp}(b).  
The effect of this $\partial$-compression is a band sum that merges $\delta'$ and $\partial E_1$ into a single curve, which we still denote by $\delta'$. The arc $\mathfrak{u}'$ is merged with $\partial E_1$ into a subarc of $\delta'$, which we still denote by $\mathfrak{u}'$.  
As in Subcase (3a) and Figure~\ref{FEp}, this $\partial$-compression reduces the number of $U_2'$-disks by one. 
We continue these operations until $D'$ contains  exactly one $U_2'$-disk.  

In the remaining part of the section, we fix $D'$ and suppose $D'$ contains no $U_1'$-disk and exactly one $U_2'$-disk.

\vspace{8pt}
\noindent
Step 3. Count intersection numbers and obtain a contradiction.

\vspace{8pt}

Consider the new disk $D'$ after Step 2. The $U_1'$- and $U_2'$-curves satisfy the two properties listed before Step 2. 
Let $E_1$ be the remaining $U_2'$-disk in $D'$. By our construction, $D'$ still contains at least one $V$-disk. Let $P$ be the planar surface obtained by removing $\Int(E_1)$ and the interior of all the $V$-disks.  
By Lemma~\ref{Lparallel}, all the outermost arcs of $D'\cap (\Gamma_1\cup\Gamma_2)$ are parallel in $P$ connecting $\partial E_1$ to the arc $\mathfrak{u}'$ in $\partial D'$.

\begin{lemma}\label{LVdisk}
Suppose an arc $\beta$ of $D'\cap\Gamma_i$ has both endpoints in $\mathfrak{u}'$.  Let $D_\beta$ be the subdisk of $D'$ bounded by $\beta$ and the subarc of $\mathfrak{u}'$ between $\partial\beta$. Then $D_\beta$ must contain a $V$-disk.
\end{lemma}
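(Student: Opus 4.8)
The plan is to argue by contradiction, closely following the template of Lemma~\ref{LinnerU}. Assume $D_\beta$ contains no $V$-disk. Replacing $\beta$ by an innermost choice — an arc of $D'\cap\Gamma_i$ with both endpoints in $\mathfrak{u}'$ whose associated subdisk $D_\beta$ contains no other such arc — I would first arrange (as in the derivation ``$\Int(\alpha')\cap\Delta_s=\emptyset\Rightarrow\Int(\alpha')\cap\kappa=\emptyset$'' in the proof of Lemma~\ref{LinnerU}) that the shadow arc $\beta'\subset v_i\times\{1\}$ has interior disjoint from the subarc $\mathfrak{u}'_\beta\subset\mathfrak{u}'$ cut off by $\partial\beta$; a $V$-disk inside an innermost $D_\beta$ a fortiori lies in a larger one, so this reduction is harmless. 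Pushing $\beta$ across its shadow bigon in $\Gamma_i\subset V_i$ onto $\beta'$ — an ambient isotopy of $D_\beta$ supported near $\Gamma_i\subset S\times I$ — carries $D_\beta$ to an embedded disk $D_\beta'$ with $\partial D_\beta'=\beta'\cup\mathfrak{u}'_\beta$, an embedded circle lying in $S\times\{1\}$, and it does not change which $U'$- and $V$-disks $D_\beta$ meets.

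I would then split into two cases. If $D_\beta$ meets no $U'$-disk, then $D_\beta$ passes through none of $N(U_1),N(U_2),N(V_1),N(V_2)$, so $D_\beta\subset S\times I$ and hence $D_\beta'\subset S\times I$. A disk in $S\times I$ with boundary in $S\times\{1\}$ has boundary bounding a disk in $S\times\{1\}$, so $\beta'$ and $\mathfrak{u}'_\beta$ bound a bigon there; that is, $v_i\times\{1\}$ and $\delta'$ form a bigon, contradicting the first of the two properties established at the end of Step~1 (preserved through Step~2 by Lemma~\ref{Lbandsum}), namely that the $U_1'$-curve $\delta'$ meets $v_1\times\{1\}$ and $v_2\times\{1\}$ tightly. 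If instead $D_\beta$ meets a $U'$-disk, then — since the only $U'$-disk of $D'$ after Step~2 is the single $U_2'$-disk $E_1$ — we have $E_1\subset D_\beta$, and $D_\beta$ meets no other $U'$-disk and, by assumption, no $V$-disk. Then $\overline{D_\beta\setminus\Int(E_1)}$ lies in $S\times I$, and after the isotopy it becomes an embedded annulus in $S\times I$ with boundary $(\beta'\cup\mathfrak{u}'_\beta)\sqcup\partial E_1$, both circles in $S\times\{1\}$; hence $\beta'\cup\mathfrak{u}'_\beta$ is isotopic in $S\times\{1\}$ to the $U_2'$-curve $\partial E_1$.

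The main obstacle is extracting a contradiction in this last case. Here I would exploit that $\mathfrak{u}'_\beta$ is an arc of the meridian $\delta'$, which is disjoint from $\partial E_1$, and that the $U_1'$- and $U_2'$-curves meet $v_1,v_2$ tightly and admit no wave from $v_1,v_2$ (the two properties at the end of Step~1, persisting through Step~2). There are two complementary ways to finish: (i) realize $\partial E_1$ by the curve $\beta'\cup\mathfrak{u}'_\beta$ and, after cutting $S\times\{1\}$ along $\delta'\cup\partial E_1$ into a four-holed sphere and analysing the position of the $v_i$-subarc $\beta'$ relative to the $\delta'$-subarc $\mathfrak{u}'_\beta$, produce either a bigon or a wave in the Heegaard diagram, or a representative of $\partial E_1$ meeting $v_i$ in fewer points than $|\partial E_1\cap v_i|$ — in each case a contradiction; or (ii) band-sum $\partial E_1$ onto $\delta'$ along the shadow arcs of the outermost arcs of $D'\cap(\Gamma_1\cup\Gamma_2)$, all of which lie in $D_\beta$ once $E_1\subset D_\beta$ (by the analysis used for Lemma~\ref{LU12} and Lemma~\ref{Lparallel}), exactly as in Step~1, thereby removing $E_1$ from $D_\beta$ while keeping the Heegaard splitting strongly irreducible and wave-free, and then invoke the first case. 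Either route yields the desired contradiction and completes the proof.
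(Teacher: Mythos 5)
Your approach is genuinely different from the paper's, and it has a gap in the second case. The paper does not split into cases on whether $E_1\subset D_\beta$: it runs a single free-group argument in $\pi_1(\mathcal{U})$, mirroring the proof of Lemma~\ref{LbothUV}. Since $D_\beta$ has no $V$-disk, the isotoped disk $D_\beta'$ lies in $\mathcal{U}$ (whether or not it contains $E_1$), so $\partial D_\beta'=\beta'\cup\mathfrak{u}'_\beta$ is null-homotopic in $\mathcal{U}$. Reading the word spelled by $\beta'$ against the free basis dual to $\{\partial E_1,\delta'\}$ and cancelling an adjacent pair $g_jg_j^{-1}$ produces a subarc of $v_i$ that is a wave with respect to $\{\partial E_1,\delta'\}$, contradicting the no-wave property preserved from Step~1 and Step~2. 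No embeddedness of $\partial D_\beta'$ and no case distinction is needed.

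Your Case~(a) is essentially a geometric specialization of this (the empty-word subcase), and can be made to work: after an innermost choice of $\beta$ over $D'\cap(\Gamma_1\cup\Gamma_2)$, every endpoint of an arc of $D'\cap\Gamma_i$ on $\Int(\mathfrak{u}'_\beta)$ would, since $\partial E_1\cap D_\beta=\emptyset$, have its partner also on $\mathfrak{u}'_\beta$, contradicting innermostness; but note that even then $\Int(\beta')$ may still cross $\delta'$ on $\mathfrak{u}'\setminus\mathfrak{u}'_\beta$ or on $\mathfrak{u}_0'$, so you still need to pass to an innermost subdisk of $D_\beta'$ cut off by $\delta'$ before you can speak of a genuine bigon. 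The real gap is Case~(b). There the innermostness argument no longer gives $\Int(\beta')\cap\mathfrak{u}'_\beta=\emptyset$, because an arc incident to $\Int(\mathfrak{u}'_\beta)$ may now run to $\partial E_1\subset\Int(D_\beta)$; so the asserted embedded annulus with boundary $\partial D_\beta'\sqcup\partial E_1$ is not established, and the claimed isotopy $\partial D_\beta'\simeq\partial E_1$ is not secured. Your route~(i) is not an argument. Your route~(ii), the band sum removing the last $U_2'$-disk, is not obviously legitimate: Lemma~\ref{Lbandsum} preserves only the absence of waves of the form ``subarc of $v_1\cup v_2$ against the $u$-curves,'' which is exactly the direction the paper's word argument needs, but the analogue of Lemma~\ref{LbothUV} (which would forbid a $D'$ with no $U'$-disk) requires the opposite direction, ``subarc of the $u$-curves against $v_1\cup v_2$,'' and you would need to check that this direction survives the band-sum operations before you can ``invoke the first case.'' In short, Case~(b) is sketched, not proved, and the paper's uniform free-group cancellation is the tool that closes it.
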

\begin{proof}
The two curves $\partial E_1$ and $\delta'$ form a complete set of meridians of the handlebody $\mathcal{U}$. 
By the properties of $U_1'$- and $U_2'$-curves described before Step 2, the 
intersection of $\partial E_1\cup\delta'$ with $v_1\cup v_2$ is tight, and no subarc of $v_i\times\{1\}$ is a wave with respect to $\{\partial E_1, \delta'\}$.

The argument is similar to the proof of Lemma~\ref{LbothUV}. 
Suppose that $D_\beta$ does not contain any $V$-disk. 
Let $\beta'$ be the shadow arc of $\beta$. 
By pushing $\beta$ onto $\beta'$ along its shadow bigon, we isotope $D_\beta$ into a disk $D_\beta'$ with $\partial D_\beta'\subset S\times\{1\}$. Since $D_\beta$ contains no $V$-disk, $D_\beta'$ lies totally in $\mathcal{U}$ after isotopy. 
Hence $\partial D_\beta'$ is a homotopically trivial loop in $\mathcal{U}$. 
Let $g_1$ and $g_2$ be the two generators of $\pi_1(\mathcal{U})$ dual to the pair of meridians $\{\partial E_1, \delta'\}$. 
Consider the group presentation of $\pi_1(\mathcal{U})$ using the $g_1$ and $g_2$. 
The intersection of $\beta'$ with $\partial E_1$ and $\delta'$ gives a word $w$ in this group presentation and $w$ is the element of $\pi_1(\mathcal{U})$ represented by $\partial D_\beta'$. 
Since $\partial D_\beta'$ is a trivial loop in $\mathcal{U}$ and since $\pi_1(\mathcal{U})$ is a free group, two adjacent letters in the word $w$ can be canceled, i.e.~in the form of $g_j\cdot g_j^{-1}$. This implies that a subarc of $\beta'$ is a wave with respect to $\{\partial E_1, \delta'\}$, contradicting the properties discussed before Step 2.
\end{proof}

In the remaining part of the section, we will measure the complexity of the original Heegaard diagram using $D'$, see Definition~\ref{Dmin}.  

Consider the disks $U_1$ and $U_2$ before all these operations.  
Recall that $N(B)$ is obtained by attaching the products $U_1\times I$, $U_2\times I$, $V_1\times I$ and $V_2\times I$ to $S\times I$.  
We extend the annuli $\Gamma_1$ and $\Gamma_2$ vertically into slightly larger annuli $\widehat{\Gamma}_1$ and $\widehat{\Gamma}_2$ so that $\Gamma_i\subset\Int(\widehat{\Gamma}_i)$ ($i=1,2$) and  $\widehat{\Gamma}_i$ meets part of $U_1\times I$ and $U_2\times I$.  
We call $\widehat{\Gamma}_i\cap(U_1\times I)$ the $U_1$-tails and call $\widehat{\Gamma}_i\cap(U_2\times I)$ the $U_2$-tails of $\Gamma_i$, see Figure~\ref{Ftail}.  
Note that our isotopies/operations push arcs out of   $\widehat{\Gamma}_i$, but the remaining arcs of  $D'\cap \widehat{\Gamma}_i$ are still transverse to the $I$-fibers.  
In particular, the intersection of $D'$ with $\widehat{\Gamma}_1$ and $\widehat{\Gamma}_2$ consists of arcs connecting these $U_1$- and $U_2$-tails, see Figure~\ref{Ftail}.

\begin{figure}[h]
	\vskip 0.2cm
	\begin{overpic}[width=3in]{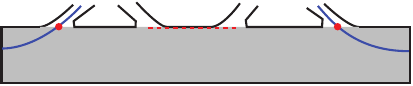}
		\put(12, 8){$X$}
		\put(80, 8){$Y$}
		\put(45,8){$\alpha''$}
	    \put(27,5){$\Gamma_i$}
	\end{overpic}
	\caption{$\Gamma_i$ and $\widehat{\Gamma}_i$}\label{Ftail}
\end{figure}

\begin{lemma}\label{Lminimality} 
Let $k$ be the number of outermost arcs of $D'\cap\Gamma_j$ ($j=1,2$). Then $k\le \min\{|v_j\cap u_1|, |v_j\cap u_2|\}$, where $\{u_1, u_2\}$ and $\{v_1, v_2\}$ are the curves in the original Heegaard diagram.
\end{lemma}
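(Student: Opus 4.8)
The proof is the arc count that the last few paragraphs have been setting up; the crucial structural input is that, after Steps~1 and~2, $D'$ contains no $U_1'$-disk and exactly one $U_2'$-disk $E_1$, so $\delta'$ is the front $U_1'$-curve and $\partial E_1$ the front $U_2'$-curve. The plan is: extend each outermost arc of $D'\cap\Gamma_j$ across a flap of $\widehat\Gamma_j$ into the $U_1$- and $U_2$-tails, as in Figure~\ref{Ftail}, thereby obtaining $k$ pairwise disjoint arcs of $D'\cap\widehat\Gamma_j$; then show that each of these arcs runs from a $U_1$-tail to a $U_2$-tail and that the $k$ arcs meet $k$ distinct $U_1$-tails and $k$ distinct $U_2$-tails. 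Since $\widehat\Gamma_j$ has exactly $|v_j\cap u_1|$ $U_1$-tails and exactly $|v_j\cap u_2|$ $U_2$-tails --- one for each intersection point of $v_j$ with $u_1$, respectively with $u_2$, in the original minimal diagram --- this gives $k\le|v_j\cap u_1|$ and $k\le|v_j\cap u_2|$, which is the assertion.

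First I would note that Lemma~\ref{LU12} applies verbatim to $D'$: its only inputs are the strong irreducibility of the splitting and the fact that no subarc of $v_1\cup v_2$ is a wave with respect to the current complete set of meridians of $\mathcal U$, and both properties are preserved by the band sums performed in Steps~1 and~2 (Lemma~\ref{Lbandsum}, Remark~\ref{Rwave}). Hence each outermost arc $\alpha$ of $D'\cap\Gamma_j$ connects the plus side of $\partial E_1$ to the arc $\mathfrak u'\subset\partial D'$, and by Lemma~\ref{Lparallel} the $k$ outermost arcs $\alpha_1,\dots,\alpha_k$ of $D'\cap\Gamma_j$ are pairwise parallel in the planar surface $P$. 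Consequently their extensions $\alpha_1'',\dots,\alpha_k''$ are pairwise disjoint and pairwise parallel as properly embedded arcs in the annulus $\widehat\Gamma_j$.

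Next I would rule out that some $\alpha_i''$ has both endpoints in $U_1$-tails, or both in $U_2$-tails. If it did, then projecting $\alpha_i''$ to $S\times\{1\}$ would produce a subarc of $v_j$ with both endpoints on $u_1$ (respectively on $u_2$) whose induced normal directions at its two ends are opposite along $v_j$, because $\alpha_i$ cuts off a bigon from the annulus $\Gamma_j$; then, exactly as in the proof of Lemma~\ref{Lu0}, between two consecutive intersection points of that subarc with $u_1$ (respectively $u_2$) there would be a wave with respect to $\{u_1,u_2\}$, contradicting the no-wave property of the original Heegaard diagram. So each $\alpha_i''$ joins a $U_1$-tail to a $U_2$-tail. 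The ``distinct tails'' statement I would then extract from the nestedness of $\alpha_1'',\dots,\alpha_k''$ in $\widehat\Gamma_j$ together with the tightness of $\partial E_1$ and of $\delta'$ with $v_j$ (a property again preserved by the band sums, by Lemma~\ref{Lbandsum}): consecutive arcs $\alpha_i''$ and $\alpha_{i+1}''$ cobound a rectangle in $\widehat\Gamma_j$, and two arcs entering the same $U_1$-tail would force the corresponding side of that rectangle to record a bigon between $v_j$ and the front $U_1$-curve, which is impossible.

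The main obstacle is making this final bookkeeping precise. A $U_1$-tail is a rectangular flap that $D'$ will in general cross several times --- once for each $U_1$-sheet of the band-summed disk $E_1$ --- so one must argue that in each tail at most one of these crossings is the extension of an outermost arc of $D'\cap\Gamma_j$ (intuitively the front one), and that the relevant tail genuinely changes as $i$ runs through $1,\dots,k$. This amounts to comparing the cyclic order of the tails around $\widehat\Gamma_j$ with the order in which the $U_1$- and $U_2$-sheets are stacked inside $E_1$, and it is exactly where the no-wave and tightness properties recorded before Step~2 are used in their most delicate form.
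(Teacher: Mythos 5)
Your general plan --- compare the number $k$ of outermost arcs with the number of $U_1$- and $U_2$-tails in $\widehat\Gamma_j$ --- matches the paper's, but the structural claim on which you hang the count is different from what the paper proves and is not established by your argument. You assert that every outermost arc $\alpha_i$ of $D'\cap\Gamma_j$ has one endpoint in a $U_1$-tail and the other in a $U_2$-tail. Lemma~\ref{LU12} only tells you that $\alpha_i$ joins the front $U_1'$-curve $\delta'$ to the front $U_2'$-curve $\partial E_1$; these are band sums of the original $u_1, u_2$, so the endpoint landing on the $\partial E_1$ side can sit in a tail of either type, and likewise for the endpoint on $\mathfrak{u}'\subset\delta'$. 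The wave argument you borrow from Lemma~\ref{Lu0} does not close the gap: that lemma crucially uses $\Int(\alpha')\cap\partial E=\emptyset$ so that a subarc meeting one meridian twice on the same side is genuinely disjoint from the other meridian and hence a wave; here $\Int(\alpha_i')$ may cross $u_2$, so a subarc between two consecutive $u_1$-intersections with opposite directions need not be a wave with respect to $\{u_1,u_2\}$. And you yourself flag the final ``distinct tails'' bookkeeping as unfinished.

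The paper's proof sidesteps the per-arc classification entirely. The $2k$ endpoints of the outermost arcs subdivide $v_j\times\{1\}$ into $2k$ arcs; take every other one to get $k$ pairwise disjoint arcs $\alpha'$, each joining two adjacent endpoints. Since the outermost arcs are transverse to the $I$-fibers, adjacent endpoints $X$ and $Y$ carry opposite induced normal directions along $v_j\times\{1\}$, so they cannot lie in the same tail. If $X$ and $Y$ both lie in $U_2$-tails and no $U_1$-tail is attached to $\Int(\alpha')$, then between two consecutive $U_2$-tails along $\alpha'$ with opposite directions there is a subarc with both endpoints on $u_2$ from the same side and interior disjoint from $u_1\cup u_2$, i.e.\ a wave with respect to $u_2$ in the original Heegaard diagram --- a contradiction. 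Symmetrically for two $U_1$-tails. Thus each of the $k$ disjoint arcs $\alpha'$ is associated with at least one $U_1$-tail and at least one $U_2$-tail (at its endpoints or in its interior), and disjointness of the $\alpha'$'s makes these tails distinct, giving $|v_j\cap u_1|\ge k$ and $|v_j\cap u_2|\ge k$. This needs no knowledge of the tail type of any individual endpoint of $\alpha_i$, which is exactly the information your version cannot extract.
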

\begin{proof}
Let $\alpha_1,\dots,\alpha_k$ be the outermost arcs of $D'\cap\Gamma_j$. We view these arcs $\alpha_i$'s in $\widehat{\Gamma}_j$ and view each endpoint of $\alpha_i$ as a point in a $U_1$- or $U_2$-tail of $\widehat{\Gamma}_j$ .  Let $X$ and $Y$ be a pair of points in $\{\partial\alpha_1,\dots,\partial\alpha_k\}$ that are adjacent in $v_j\times\{1\}$.  Since the arcs $\alpha_i$'s are outermost and transverse to the $I$-fibers, the normal directions (induced from the transverse orientation of $N(B)$) at $X$ and $Y$ must be opposite along $v_j\times\{1\}$, see Figure~\ref{Ftail}.  This means that $X$ and $Y$ cannot be in the same $U_1$- or $U_2$-tail of $\widehat{\Gamma}_j$. 

As $X$ and $Y$ are adjacent endpoints, there is a subarc $\alpha'$ of $v_j\times\{1\}$ such that $\partial\alpha'= X\cup Y$ and $\alpha'$ does not contain any other endpoint of the $\alpha_i$'s. 
If both $X$ and $Y$ are in $U_2$-tails, we claim that there must be a $U_1$-tail attached to $\Int(\alpha')$. 
To see this, if there is no $U_1$-tail attached to $\alpha'$, since the normal direction at $\partial\alpha'=X\cup Y$ are opposite, there must be a subarc $\alpha''$ (see the dashed red arc in Figure~\ref{Ftail}) of $\alpha'$ such that (1) both endpoints of $\alpha''$ lie in $U_2$-tails, (2) the induced normal directions (or the branch directions) at the endpoints of $\alpha''$ are opposite, and (3) there is no $U_1$- or $U_2$-tail incident to $\Int(\alpha'')$. 
This implies that $\alpha''$ determines a wave with respect to $u_2$ in the original Heegaard diagram formed by $\{u_1, u_2\}$ and $\{v_1, v_2\}$, a contradiction to our no-wave hypothesis on the Heegaard diagram.

The argument above implies that each arc $\alpha'$ between two adjacent endpoints of the $\alpha_i$'s can be associated with at least one $U_1$-tail and one $U_2$-tail, either at the endpoints of $\alpha'$ or in its interior. 
The $\alpha_i$'s have a total of $2k$ endpoints and we have $k$ disjoint arcs like $\alpha'$. 
This implies that the total number of $U_1$-tails and the total number of $U_2$-tails must be both at least $k$.  Hence $|v_j\cap u_1|\ge k$ and $|v_j\cap u_2|\ge k$.
\end{proof}

Let $\beta$ be an arc in $D'\cap \Gamma_i$. 
We say that the arc $\beta$ is \textit{almost outermost} if (1) $\beta$ is not outermost in $\Gamma_i$ and (2) every other arc of $D'\cap \Gamma_i$ in its shadow bigon disk is outermost in $\Gamma_i$. 
By Lemma~\ref{LU12}, the arc $\mathfrak{v}\subset\partial D'$ cannot be an outermost arc.  As $\mathfrak{v}\subset D\cap\Gamma_1$, $D'\cap \Gamma_1$ has at least one almost outermost arc. However, it is possible that every arc of $D'\cap \Gamma_2$ is outermost.

Let $\eta$ be an almost outermost arc in $D'\cap \Gamma_i$ ($i=1$, or $2$), and let $\eta'$ be the shadow arc of $\eta$.   
Since $D'$ contains no $U_1'$-disk and only one $U_2'$-disk $E_1$, by Lemma~\ref{LU12}, all the outermost arcs of $D'\cap(\Gamma_1\cup\Gamma_2)$ are arcs connecting $\partial E_1$ to the arc $\mathfrak{u}'\subset\partial D'$. By Lemma~\ref{Lparallel}, these outermost arcs are all parallel in $P=D'\setminus\Int(E_1)$. 
Let $R_P$ be a small rectangle in $P$ containing all the outermost arcs of $D'\cap(\Gamma_1\cup\Gamma_2)$, see Figure~\ref{Feta}, and let $Q=D'\setminus (E_1\cup R_P)$. 
So $Q$ is a disk. 
As $E_1$ is the only $U'$-disk in $D'$, $P$ and $Q$ do not contain any $U'$-disk. 
By our construction, both $P$ and $Q$ contain a $V$-disk.   
The almost outermost arc $\eta$ is properly embedded in the disk $Q$.  Thus, $\eta$ divides $Q$ into two disks and we denote the disk that does not contain $\mathfrak{v}$ by $D_\eta$. 
$D_\eta$ has 3 possible configurations, depending on the location of the endpoints of $\eta$: 
\begin{enumerate}
	\item If both points of $\partial \eta$ lie in $\mathfrak{u}'$, then $D_\eta$ is as shown in Figure~\ref{Feta}(a). By Lemma~\ref{LVdisk}, $D_\eta$ must contain a $V$-disk.
	\item If both points of $\partial\eta$ lie in $\partial E_1$, then then $D_\eta$ is as shown in Figure~\ref{Feta}(b). Since $\partial E_1$ has tight intersection with $v_i\times\{1\}$, $\eta'$ is not parallel to a subarc of $\partial E_1$ in $S\times I$.  Since $P$ does not contain any $U'$-disk, this means that $D_\eta$ must contain a $V$-disk.
	\item If $\eta$ has one endpoint in $\partial E_1$ and the other endpoint in $\mathfrak{u}'$, then $D_\eta$ is as shown in Figure~\ref{Feta}(c).  Since $\eta$ is not outermost, by Lemma~\ref{Lparallel}, $\eta$ is not parallel to the outermost arcs and cannot be an arc in $R_P$. Since $P$ does not contain any $U'$-disk, $D_\eta$ must also contain a $V$-disk.
\end{enumerate}
Thus, in all possible configurations, $D_\eta$ must contain a $V$-disk.

\begin{figure}[h]
	\vskip 0.2cm
	\begin{overpic}[width=4.8in]{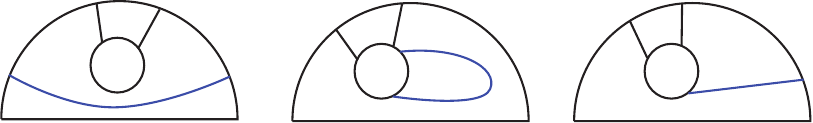}
		\put(12,-5){(a)}
		\put(50,-5){(b)}
		\put(85,-5){(c)}
		\put(13,6){$E_1$}
		\put(45,5){$E_1$}
		\put(81,5){$E_1$}
		\put(13,11.5){$R_P$}
		\put(44,10.5){$R_P$}
		\put(79,10.5){$R_P$}
		\put(5,4.5){$\eta$}
		\put(60.5,3){$\eta$}
		\put(95,3){$\eta$}
		\put(20,7){$D_\eta$}
		\put(52,5){$D_\eta$}
		\put(88,7){$D_\eta$}
	\end{overpic}
	\vskip 0.5cm
	\caption{}\label{Feta}
\end{figure}

We perform two $\partial$-compressions on $D'\cap (S\times I)$. 

First, we perform a $\partial$-compression along outermost arcs of $D'\cap(\Gamma_1\cup\Gamma_2)$, similar to the operations in Subcase (3a) above.   
All the outermost arcs are parallel in $P$ and lie in the rectangle $R_P$. As illustrated in Figure~\ref{Fproduct}(c), $R_P$ is parallel to a shadow rectangle $R$ in $S\times\{1\}$. 
This $\partial$-compression pushes $R_P$ first onto $R$ and then out of $S\times I$. 
Similar to Figure~\ref{FEp}(b), this operation merges $\mathfrak{u}'$ with $\partial E_1$ into an arc $\mathfrak{u}''$. 
As in Subcase (3a) above, the operation yields a new disk (similar to the disk $D'$ in Figure~\ref{FEp}(b)).  To distinguish from $D'$, we denote this new disk by $D''$. 
So $\partial D''=\mathfrak{u}''\cup \mathfrak{v}$. 
After this $\partial$-compression, the almost outermost arc $\eta$ becomes an outermost arc of $D''\cap\Gamma_i$.

Next, we perform another $\partial$-compression on $D''$ along $\eta$, pushing $\eta$ onto its shadow arc $\eta'$. These two $\partial$-compressions push the disk $D_\eta$ described above into a disk $D_\eta'$ with  $\partial D_\eta'\subset S\times\{1\}$. 
As $D''$ contains no $U'$-disk, after isotopy, $D_\eta'$ is a disk properly embedded in the handlebody $\mathcal{V}$. 
Since $D_\eta$ contains at least one $V$-disk, similar to the proof of Lemma~\ref{Lnoclosed}, there is an arc in $\mathcal{V}$ connecting the plus side of $D_\eta'$ to its minus side. This implies that (1)
$\partial D_\eta'$ must be a non-trivial curve in the Heegaard surface $S\times\{1\}$ and (2)  $D_\eta'$ is non-separating in $\mathcal{V}$.  Thus, $\partial D_\eta'$ is a meridian of the handlebody $\mathcal{V}$. 

\begin{lemma}\label{Lalmost}
Let $\eta$ be an almost outermost arc of $D'\cap\Gamma_i$ ($i=1$ or $2$) and let $D_\eta'$ be the disk described above. Let $k_1$ be the number of outermost arcs of $D'\cap\Gamma_i$ in the shadow bigon disk of $\eta$. Then, 
$|\partial D_\eta'\cap\partial E_1|\le k_1<\min \{|v_i\cap u_1|, |v_i\cap u_2|\}$. 
\end{lemma}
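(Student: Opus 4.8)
The plan is to prove the two inequalities separately, both by tracking how the disk $D_\eta'$ and its boundary curve sit relative to the curve $\partial E_1$ and the $U_1$- and $U_2$-tails of $\widehat{\Gamma}_i$, and then invoking the tail-counting of Lemma~\ref{Lminimality}.

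First I would establish $|\partial D_\eta'\cap\partial E_1|\le k_1$. Recall that $D_\eta'$ is produced from $D'$ by two $\partial$-compressions: the first pushes the rectangle $R_P$ — which by Lemma~\ref{Lparallel} contains precisely all the outermost arcs of $D'\cap(\Gamma_1\cup\Gamma_2)$ — onto its shadow rectangle $R$ in $S\times\{1\}$ and then out of $S\times I$, amalgamating $\mathfrak{u}'$ with $\partial E_1$ into $\mathfrak{u}''$ and making $\eta$ an outermost arc of $D''\cap\Gamma_i$; the second pushes $\eta$ across its shadow bigon onto $\eta'\subset v_i\times\{1\}$. Hence $\partial D_\eta'=\eta'\cup\kappa$, where $\kappa$ is a subarc of $\partial D''\subset\mathfrak{u}''\cup\mathfrak{v}$. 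I would observe that, after the first $\partial$-compression, the curve $\partial E_1$ has been absorbed into $\mathfrak{u}''\subset\partial D''$, so $\kappa$ may be chosen disjoint from $\partial E_1$; and $\eta'\subset v_i\times\{1\}$ meets $\partial E_1$ only at the points of $v_i\cap\partial E_1$ lying between the endpoints of $\eta'$, which are exactly the endpoints on the front $U_2'$-curve $\partial E_1$ of the $k_1$ outermost arcs of $D'\cap\Gamma_i$ contained in the shadow bigon disk of $\eta$ (each such outermost arc connects $\partial E_1$ to $\mathfrak{u}'$ by Lemma~\ref{LU12}, contributing one point). This gives $|\partial D_\eta'\cap\partial E_1|\le k_1$.

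For the strict inequality $k_1<\min\{|v_i\cap u_1|,|v_i\cap u_2|\}$, I would run the argument of Lemma~\ref{Lminimality} on the family consisting of the $k_1$ outermost arcs $\alpha_1,\dots,\alpha_{k_1}$ in the shadow bigon disk of $\eta$ together with $\eta$ itself: these $k_1+1$ arcs of $D'\cap\Gamma_i$ are mutually $\partial$-parallel, transverse to the $I$-fibers, with $\eta$ the outermost among them. As in Lemma~\ref{Lminimality}, two consecutive endpoints along $v_i\times\{1\}$ with opposite induced normal directions cannot lie in the same $U_m$-tail, and the subarc of $v_i\times\{1\}$ between any two such consecutive endpoints must be incident to at least one $U_1$-tail and at least one $U_2$-tail of $\widehat{\Gamma}_i$ (otherwise one produces a wave with respect to $\{u_1,u_2\}$, contradicting the no-wave hypothesis); choosing $k_1$ disjoint such subarcs among the shadow arcs of $\alpha_1,\dots,\alpha_{k_1}$ reproves the bound $k_1\le\min\{|v_i\cap u_1|,|v_i\cap u_2|\}$. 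The extra unit comes from $\eta$: since the shadow arc $\eta'$ strictly contains all $k_1$ shadow arcs of the $\alpha_j$'s, at least one endpoint of $\eta$ is followed — going away from $\eta'$ along $v_i\times\{1\}$ — by an endpoint of another arc of $D'\cap\Gamma_i$ with opposite induced direction. When $i=1$ this other arc can be taken to come from $\mathfrak{v}$, which by Lemma~\ref{LU12} is not outermost and hence lies outside the shadow bigon disk of $\eta$; when $i=2$ one uses instead that $\eta$ is only almost outermost, so some arc of $D'\cap\Gamma_i$ separates $\eta$ from $\partial D'$. This yields a gap subarc disjoint from the previous $k_1$, hence one more $U_1$-tail and one more $U_2$-tail, so $k_1+1\le\min\{|v_i\cap u_1|,|v_i\cap u_2|\}$, giving the desired strict inequality.

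The main obstacle is this last point: producing the extra disjoint gap subarc and verifying it is incident to both a $U_1$-tail and a $U_2$-tail, uniformly over all positions of the endpoints of $\eta$ (both on $\mathfrak{u}'$, both on $\partial E_1$, or one of each) and over $i=1,2$. This requires a careful analysis of the induced normal directions at the endpoints of $\eta$ relative to the tails of $\widehat{\Gamma}_i$, using the property that no subarc of $(v_1\cup v_2)\times\{1\}$ is a wave with respect to $\{\partial E_1,\delta'\}$ (which holds by Lemma~\ref{Lbandsum} and Remark~\ref{Rwave}) to exclude a gap subarc that would otherwise avoid a $U_1$- or $U_2$-tail. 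By contrast, the first inequality is essentially bookkeeping.
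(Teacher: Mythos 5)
The first inequality $|\partial D_\eta'\cap\partial E_1|\le k_1$ is essentially the paper's argument (count the $k_1$ endpoints of the outermost arcs on $\partial E_1$ inside $\Int(\eta')$ and observe that after the two $\partial$-compressions the rest of $\partial D_\eta'$ can be pushed off $\partial E_1$); that part is fine.

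The strict inequality is where you diverge from the paper, and the divergence introduces a genuine gap. You try to prove $k_1+1\le\min\{|v_i\cap u_1|,|v_i\cap u_2|\}$ directly by running the tail-counting argument of Lemma~\ref{Lminimality} on the $k_1$ outermost arcs \emph{plus} $\eta$, claiming that the two endpoints of $\eta$ contribute one more disjoint gap subarc and hence one more $U_1$-tail and one more $U_2$-tail. The problem is that Lemma~\ref{Lminimality}'s conclusion that disjoint gap arcs contribute \emph{distinct} tails rests on the fact that for \emph{outermost} arcs the shadow bigons are pairwise disjoint, so any two adjacent endpoints have \emph{opposite} induced directions along $v_i\times\{1\}$ and therefore cannot lie in the same tail. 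Here $\eta$ is not outermost: its shadow bigon \emph{contains} the shadow bigons of $\alpha_1,\dots,\alpha_{k_1}$. Consequently, the direction at an endpoint of $\eta$ (which points into $\eta'$) is the \emph{same} as the direction at the adjacent endpoint $\alpha_1^-$ (or $\alpha_{k_1}^+$), which also points inward, deeper into $\eta'$. Two adjacent endpoints with the same induced direction can perfectly well lie in the same $U_1$- or $U_2$-tail (the arcs $\eta$ and $\alpha_1$ can enter the same $U_m\times I$ region at a single crossing $v_i\cap u_m$), in which case a single tail intersects both $\eta''$ and $\alpha_1'$, and your count collapses to $k_1$. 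Nothing in your write-up rules this collision out, and I do not see how to rule it out in general, especially in the case corresponding to the paper's Case (ii).

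What the paper does instead is split on whether there exists an outermost arc outside the shadow bigon of $\eta$. If there is one (Case (i)), then $k_1<k$ and Lemma~\ref{Lminimality} applied to all $k$ outermost arcs already gives $k_1<k\le\min\{\cdot\}$; here the extra tail really does come from a new outermost arc whose shadow bigon is disjoint from the others, so the opposite-direction argument applies and the count is sound. If there is none (Case (ii), i.e.\ $k_1=k$), the paper does \emph{not} try to extract an extra tail; it instead produces an arc $\beta$ of $D'\cap\Gamma_i$ that is outermost in $V_i$ but not in $\Gamma_i$, performs a $\partial$-compression in the \emph{negative} $I$-direction through the complementary disk $E_\beta\supset V_i\setminus\Gamma_i$, and concludes that $\partial D_\beta'$ would meet $\partial E_1$ in a single point, forcing the Heegaard splitting to be stabilized — a contradiction. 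This geometric step, not a refined tail count, is what rules out the bad case, and it is exactly what your proposal is missing. Your sub-case remarks for $i=1$ (using $\mathfrak{v}$) and $i=2$ do not repair this: when the relevant bigons are nested, the adjacent directions are again equal rather than opposite, and the $i=2$ claim that ``some arc separates $\eta$ from $\partial D'$'' is not a well-defined statement in $\Gamma_2$.
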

\begin{proof} 
Let $\eta'$ and $E_\eta$ be the shadow arc and shadow bigon of $\eta$ respectively. 
By Lemma~\ref{LU12}, each outermost arc has one endpoint in $\partial E_1$ and the other endpoint in $\mathfrak{u}'$. This implies that  $|\Int(\eta')\cap\partial E_1|=k_1$ and hence $|\partial D_\eta'\cap\partial E_1|\le k_1$ after isotopy. 

Let $k$ be the total number of outermost arcs of $D'\cap\Gamma_i$ before the two $\partial$-compressions described above. 
We have two cases to discuss:

\vspace{8pt}
\noindent
Case (i) $D'\cap\Gamma_i$ contains an outermost arc outside the shadow bigon disk $E_\eta$
\vspace{8pt}

In this case, we have 
$k_1<k$. By Lemma~\ref{Lminimality}, we have $|\partial D_\eta'\cap\partial E_1|\le k_1<k\le \min\{|v_j\cap u_1|, |v_j\cap u_2|\}$, and the lemma holds.

\vspace{8pt}
\noindent
Case (ii) all the outermost arc of $D'\cap\Gamma_i$ lie in the shadow bigon disk $E_\eta$, i.e.~$k_1=k$.
\vspace{8pt}

In this case, we view the annulus $\Gamma_i$ as a sub-surface of the meridional disk $V_i$. Since all the outermost arcs lie in the bigon disk $E_\eta$, there must be an arc $\beta$ of $D'\cap\Gamma_i$ that is outermost in $V_i$ but not outermost in $\Gamma_i$, see Figure~\ref{Fbeta}.  In other words, the outermost bigon disk determined by $\beta$ in $V_i$ contains the disk $V_i\setminus\Gamma_i$. 
Note that this means that the shadow bigon of $\beta$ contains   $E_\eta$.
Let $\beta''$ be the arc in $v_i\times\{1\}$ such that $\partial\beta''=\partial\beta$ and $\beta\cup\beta''$ bounds the outermost disk $E_\beta$ in $V_i$, see the shaded region in Figure~\ref{Fbeta} for a picture of $\beta$ and $E_\beta$. Since $\beta$ is outermost in $V_i$,  $\Int(\beta'')\cap D'=\emptyset$ and $V_i$ is the union of $E_\beta$ and the shadow bigon of $\beta$.

\begin{figure}[h]
	\begin{overpic}[width=1.3in]{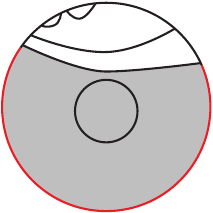}
		\put(52,80){$\eta$}
		\put(76,71){$\beta$}
		\put(98,33){$\beta''$}
		\put(65,23){$E_\beta$}
	\end{overpic}
	\caption{}\label{Fbeta}
\end{figure}

We view $\beta$ as an arc in $D'$ before the two $\partial$-compressions. 
By Lemma~\ref{Lu0} (replacing $D$, $\mathfrak{u}$, $\alpha$, $\alpha'$ and $U_2$ in Lemma~\ref{Lu0} with $D'$, $\mathfrak{u}'$, $\beta$, $\beta''$ and $U_2'$ respectively),  if $\beta$ has both endpoints in $\mathfrak{u}'$, then $\Int(\beta'')\cap D'\ne\emptyset$, contradicting our construction of $\beta''$. So,
$\beta$ cannot have both endpoints in $\mathfrak{u}'\subset\partial D'$. 
By Lemma~\ref{LinnerU} (replacing $D$, $\Delta_s$, $\alpha$, $\alpha'$ in Lemma~\ref{LinnerU} with $D'$, $E_1$, $\beta$, $\beta''$ respectively), if $\beta$ has both endpoints in $\partial E_1$, then the subdisk of $D'\setminus \Int(E_1)$ bounded by $\beta$ and a subarc of $\partial E_1$ must contain a $U'$-disk, contradicting our assumption that $E_1$ is the only $U'$-disk in $D'$.
Hence, $\beta$ cannot have both endpoints in $\partial E_1$.
Thus, $\beta$ has one endpoint in $\partial E_1$ and the other endpoint of in $\mathfrak{u}'\subset\partial D'$. 
Similar to the picture of $D_\eta$ in Figure~\ref{Feta}(c), 
there is a quadrilateral subdisk $D_\beta$ of $D'$ bounded by the union of
$\beta$, a subarc of $\partial E_1$, a subarc of $\mathfrak{u}'$, and an outermost arc $\alpha$ of $D'\cap\Gamma_i$. 

Now we perform two $\partial$-compressions. The first is along the shadow bigon disk of the outermost arc $\alpha$ and the second $\partial$-compression is along the disk $E_\beta$.  Similar to the disk $D_\eta'$ described earlier, the two $\partial$-compressions push $D_\beta$ into a disk $D_\beta'$ with $\partial D_\beta'\subset S\times\{1\}$.  
Since $E_\beta$ is the complement of the shadow bigon disk of $\beta$ in $V_i$, the second $\partial$-compression pushes $D_\beta$ towards the negative direction, whereas 
the first $\partial$-compression pushes $D_\beta$ towards the positive direction. 
Since $\beta$ has exactly one endpoint in $\partial E_1$, 
this implies that, after isotopy, $\partial E_1$ intersects $\partial D_\beta'$ in a single point, which means that the Heegaard splitting is stabilized and not of minimal-genus, a contradiction. 
This means that Case (ii) cannot happen, and the lemma holds.
\end{proof}

Recall that the Heegaard diagram formed by $\{u_1,u_2\}$ and $\{v_1, v_2\}$ is assumed to have minimal complexity, see Definition~\ref{Dmin}. 
In Definition~\ref{Dmin}, we have a pair of special meridians $v_s$ and $u_t$ ($t=1$ or $2$ and $s=1$ or $2$) such that $|v_s\cap u_t|$ is minimal among all essential disk pairs in $\mathcal{V}$ and $\mathcal{U}$. 
Consider the special meridian $v_s$ and the annulus $\Gamma_s=v_s\times I$. 
If $D'\cap\Gamma_s$ contains an almost outermost arc $\eta$, Lemma~\ref{Lalmost} implies that $|\partial D_\eta'\cap\partial E_1|<|v_s\cap u_t|$.  
Since $\partial E_1$ is a meridian of $\mathcal{U}$ and $\partial D_\eta'$ is a meridian of $\mathcal{V}$,  this contradicts the hypothesis that the Heegaard diagram is minimal.
Thus, $D'\cap\Gamma_s$ does not contain any almost outermost arc.  Hence, every arc of $D'\cap\Gamma_s$ is outermost.

Since $\Gamma_1\supset\mathfrak{v}$, $D'\cap\Gamma_1$ always contains an almost outermost arc. This means that $v_2$ must be the special meridian in the complexity, i.e.~$s=2$, and hence every arc of $D'\cap\Gamma_2$ is outermost in $\Gamma_2$. 

Let $k$ be the number of arcs in $D'\cap\Gamma_2$. Since every arc of $D'\cap\Gamma_2$ is outermost, Lemma~\ref{Lminimality} implies that $k\le \min\{|v_2\cap u_1|, |v_2\cap u_2|\}$. 
By Lemma~\ref{LU12}, each outermost arc has one endpoint in $\partial E_1$ and the other endpoint in $\mathfrak{u}'\subset\delta'$.  This implies that  $k=|v_2\cap\partial E_1|$. 
Hence, $|v_2\cap\partial E_1|\le\min\{|v_2\cap u_1|, |v_2\cap u_2|\}$.
We concluded earlier that $v_2$ is a special meridian in Definition~\ref{Dmin}. Let $u_t$ ($t=1$ or $2$) be the other special meridian.  As the complexity of the Heegaard diagram is minimal, the equality must hold, that is, $\min\{|v_2\cap u_1|, |v_2\cap u_2|\}=|v_2\cap u_t|=|v_2\cap\partial E_1|$.

As $D'\cap\Gamma_1$ always contains an almost outermost arc, let $\eta$ be an almost outermost arc in $\Gamma_1$ and let $D_\eta'$ be the disk in Lemma~\ref{Lalmost}.  
Recall that the disk $D_\eta'$ is obtained by two $\partial$-compressions and the first $\partial$-compression is along all outermost bigons. 
Since every arc of $D'\cap\Gamma_2$ is outermost, $D'$ becomes disjoint from $\Gamma_2$ after the first $\partial$-compression, i.e.~$D''\cap\Gamma_2=\emptyset$, see Lemma~\ref{Lparallel}.  This means that $\partial D_\eta'$ is disjoint from $v_2$.

We have concluded before Lemma~\ref{Lalmost} that $\partial D_\eta'$ is a meridian of $\mathcal{V}$. 
By Lemma~\ref{Lalmost}, $|\partial D_\eta'\cap\partial E_1|<\min\{|v_1\cap u_1|, |v_1\cap u_2|\}$.
We have two cases to discuss:

The first case is that $\partial D_\eta'$ is isotopic to $v_2$.  In this case, we have $|v_2\cap\partial E_1|=|\partial D_\eta'\cap\partial E_1|<\min\{|v_1\cap u_1|, |v_1\cap u_2|\}$. However, we have concluded earlier that $|v_2\cap u_t|=|v_2\cap\partial E_1|$, where $v_2$ and $u_t$ are the special meridians in Definition~\ref{Dmin}.  This means that 
$|v_2\cap u_t|<\min\{|v_1\cap u_1|, |v_1\cap u_2|\}$ and this contradicts our definition of special meridians, see Definition~\ref{Dmin}.

It remains to consider the case that $\partial D_\eta'$ is not isotopic to $v_2$. 
We have concluded earlier that $\partial D_\eta'$ is disjoint from $v_2$. 
So, in this case,  $\{v_2, \partial D_\eta'\}$ is a complete set of meridians for $\mathcal{V}$. 
By adding a meridian $u_E$ of $\mathcal{U}$ that is disjoint from $\partial E_1$, we can construct a new complete set of meridians $\{\partial E_1, u_E\}$ of $\mathcal{U}$. 
By the conclusions $|\partial D_\eta'\cap\partial E_1|<\min\{|v_1\cap u_1|, |v_1\cap u_2|\}$ and $|v_2\cap\partial E_1|\le \min\{|v_2\cap u_1|, |v_2\cap u_2|\}$, we have $|(v_2\cup\partial D_\eta')\cap\partial E_1|< \min\{|(v_1\cup v_2)\cap u_1|, |(v_1\cup v_2)\cap u_2|\}$. 
Hence, $|(v_2\cup\partial D_\eta')\cap\partial E_1|< |(v_1\cup v_2)\cap u_t|$. 
These inequalities imply that 
 the new Heegaard diagram formed by $\{\partial E_1, u_E\}$ and $\{v_2, \partial D_\eta'\}$ has smaller complexity after  setting $\partial E_1$ and $v_2$ as the special meridians.   This contradicts the assumption the original Heegaard diagram has minimal complexity.  

Therefore, Proposition~\ref{Pbigon} holds.

\end{document}